\newtheorem{lemma}{Lemma}[section]
\newtheorem{example}[lemma]{Example}
\newtheorem{theorem}[lemma]{Theorem}
\newtheorem{conjecture}[lemma]{Conjecture}
\newtheorem{corollary}[lemma]{Corollary}
\newtheorem{remark}[lemma]{Remark}
\theoremstyle{definition}
\newtheorem{definition}[lemma]{Definition}
\newcommand{\jcx}{\overline{JC}_x} 
\newcommand{\Td}{\mathbf{T}_{\delta}}
\DeclareMathOperator{\bounce}{bounce}
\author{Evgeny Gorsky\and Mikhail Mazin}
\title{Compactified Jacobians and $q,t$-Catalan numbers, II }
\begin{document}

\begin{abstract}
We continue the study of the rational-slope generalized $q,t$-Catalan numbers $c_{m,n}(q,t)$.
We describe generalizations of the bijective constructions of J. Haglund and N. Loehr and use them
to prove a weak symmetry property $c_{m,n}(q,1)=c_{m,n}(1,q)$ for $m=kn\pm 1$. We give a bijective proof of the full symmetry $c_{m,n}(q,t)=c_{m,n}(t,q)$ for $\min(m,n)\le 3$. As a corollary of these combinatorial constructions, we give a simple formula for the Poincar\'e polynomials of compactified Jacobians of plane curve singularities  $x^{kn\pm 1}=y^n$. We also give a geometric interpretation of a relation between rational-slope Catalan numbers and the theory of $(m,n)$-cores discovered by J. Anderson.

\keywords{$q,t$-Catalan numbers\and Compactified Jacobian\and Semigroup}
\end{abstract}

\maketitle

\section{Introduction}

Let $m$ and $n$ be two coprime positive integers.

\begin{definition}
Consider a rectangle with width $m$ and height $n$. The rational-slope Catalan number $$c_{m,n}=\frac{(m+n-1)!}{m!n!}$$ is defined (e.g. \cite{arms}) as the number of lattice paths from the northwest corner to the southeast corner of this rectangle, staying below the diagonal connecting these corners. We will denote by $Y_{m,n}$ the set of all such paths, so that $|Y_{m,n}|=c_{m,n}$.
\end{definition}

In the case $m=n+1$ the number $c_{m,n}$ coincides with the usual Catalan number. In \cite{GH} M. Haiman and A. Garsia proposed a bivariate generalization of Catalan numbers whose combinatorial definition (\cite{gahagl}) can be naturally extended to the rational-slope case.

\begin{definition}
Let $D$ be the Young diagram. Following \cite{lowa}, we define the statistics
$$h_{+}^{\frac{n}{m}}(D):=\sharp \left\{c\in D : {\frac{a(c)}{l(c)+1}}\le \frac{n}{m}< \frac{a(c)+1}{l(c)}\right\},$$
and $a(c)$ and $l(c)$ denote the arm- and leg-length for a box $c\in D$.
\end{definition}

\begin{definition}(\cite{GM})
We define the rational-slope $q,t$-Catalan number by the formula
$$c_{m,n}(q,t)=\sum_{D\in Y_{m,n}}q^{\delta-|D|}t^{h_{+}^{\frac{n}{m}}(D)},$$
where $\delta=\frac{(m-1)(n-1)}{2}$.
\end{definition}

Note that the polynomials $c_{m,n}(q,t)$ are special cases of more general polynomials considered in \cite{lowa}. For the reader's convenience, from now on we will write $h_{+}$ instead of $h_{+}^{\frac{n}{m}}.$ 

It has been proved in \cite{gahagl} that $c_{m,n}(q,t)$ coincides  with the $q,t$-Catalan numbers 
of A. Garsia and M. Haiman (\cite{GH}) for $m=n+1$. It has been conjectured in \cite{loehr} that $c_{n,kn+1}(q,t)$ coincides with the $k$-analogue of them. The following two conjectures are motivated by the above coincidences. Both conjectures appeared previously in the literature, but in different setups.

\begin{conjecture}[Symmetry conjecture]\label{SymConj}
The function $c_{m,n}(q,t)$ satisfies the functional equation
$$
c_{m,n}(q,t)=c_{m,n}(t,q).
$$
\end{conjecture}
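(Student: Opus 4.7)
The natural target is an involution $\phi : Y_{m,n} \to Y_{m,n}$ satisfying
\[
\delta - |\phi(D)| = h_{+}(D) \quad \text{and} \quad h_{+}(\phi(D)) = \delta - |D|,
\]
since such a map would immediately swap the two statistics in the defining sum and yield $c_{m,n}(q,t)=c_{m,n}(t,q)$. In the classical case $m=n+1$, Haglund's zeta map (composed with the complementation that turns bounce into its co-statistic) realizes such an involution, so the overarching plan is to seek a rational-slope analogue of $\zeta$ adapted to the line of slope $n/m$.

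As a first step I would settle the boundary cases $\min(m,n)\le 3$ by direct analysis. When $n\le 3$ the diagrams $D\in Y_{m,n}$ are indexed by partitions of length at most two, and both $|D|$ and $h_{+}(D)$ become explicit piecewise-linear functions of the parts; matching residues modulo $m$ (or a short generating-function manipulation) then furnishes the required bijection. Beyond giving the claimed low-rank cases, this laboratory is meant to expose the combinatorial mechanism controlling $h_{+}$, in particular the interplay with $(m,n)$-cores alluded to in the abstract, which I expect any general construction to respect.

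For the general case I would pursue two parallel routes. Combinatorially, I would try to build $\zeta_{m,n}$ by reading the north/east steps of the boundary of $D$ along diagonals of slope $-m/n$ and re-emitting them along diagonals of slope $-n/m$; the arm--leg inequality defining $h_{+}$ should, after this transposition, translate into the condition that a box lies strictly below the new diagonal, converting $h_{+}$ into area and vice versa. Geometrically, I would exploit the identification of $c_{m,n}(1,t)$ with the Poincar\'e polynomial of the compactified Jacobian of $x^{m}=y^{n}$ and attempt to lift the symmetry to a geometric involution on an affine Springer fibre, swapping the two characters of the natural torus associated with $m$ and $n$, with Poincar\'e duality providing the statistic swap.

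The main obstacle is the asymmetric nature of the two statistics: $\delta - |D|$ is a local, purely area-theoretic quantity, whereas $h_{+}(D)$ is a global count of boxes whose arm-to-leg ratio straddles $n/m$, so any swap between them is forced to be highly non-local. The weak symmetries of the Haglund--Loehr type only exchange $q$ and $t$ after specializing one of them to $1$, and even there only for $m = kn\pm 1$; upgrading to the full bivariate statement appears to require either a genuinely new bijection or a representation-theoretic model in which both gradings appear on equal footing. For that reason I expect a realistic outcome of this plan to be a complete proof for $\min(m,n)\le 3$ together with a conjectural rational $\zeta_{m,n}$ that reduces Conjecture~\ref{SymConj} to a precise combinatorial identity, with the general case remaining open.
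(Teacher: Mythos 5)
You correctly treat the statement as an open conjecture and propose exactly the partial result the paper establishes: a bijective proof for $\min(m,n)\le 3$, with the general case left conjectural. Your plan for the low-rank case is in the right spirit (direct analysis of diagrams with at most two rows), but the paper's actual mechanism for $(3,n)$ is sharper than ``matching residues'' or a generating-function manipulation: it shows that the map $\phi(D)=(\delta-|D|,\,h_{+}(D))$ is injective, that its image is precisely the set of lattice points of the triangle $\{a+b\le\delta,\ a+2b\ge\delta,\ 2a+b\ge\delta\}$, and that this triangle is symmetric under $(a,b)\mapsto(b,a)$, so the desired involution is simply $\phi^{-1}\circ\mathbf{s}\circ\phi$. (The parity split you allude to does appear, but only as a bookkeeping device in verifying surjectivity onto the triangle.) Your proposed geometric route via a torus involution on the affine Springer fibre and Poincar\'e duality is not pursued in the paper; the semigroup-theoretic maps $G_m$ and $G_n$ it develops are used only to prove the \emph{weak} symmetry $c_{m,n}(q,1)=c_{m,n}(1,q)$ for $m=kn\pm 1$, not the full bivariate statement.
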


This conjecture is a special case of the Conjecture $22$ of Loehr and Warrington (\cite{lowa}). The second conjecture is a weaker version of the first one:

\begin{conjecture}[Weak symmetry conjecture]\label{WeakSymConj}
The function $c_{m,n}(q,t)$ satisfies the functional equation
$$
c_{m,n}(q,1)=c_{m,n}(1,q).
$$
\end{conjecture}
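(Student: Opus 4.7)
The plan is to prove the weak symmetry by constructing an explicit bijection $\zeta \colon Y_{m,n} \to Y_{m,n}$ that interchanges the two statistics appearing in $c_{m,n}(q,1)$ and $c_{m,n}(1,q)$. Setting $t=1$ collapses the polynomial to $\sum_{D} q^{\delta - |D|}$, while setting $q=1$ gives $\sum_{D} q^{h_{+}(D)}$, so it suffices to exhibit $\zeta$ satisfying $h_{+}(D) = \delta - |\zeta(D)|$.

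First, I would introduce a rational-slope analogue of Haglund's bounce statistic, $\bounce \colon Y_{m,n} \to \mathbb{Z}_{\ge 0}$. In the case $m = kn+1$ Loehr defined such a statistic together with an associated ``zeta map'' carrying the area $\delta - |D|$ to $\bounce(D)$; this provides the template. For $m = kn-1$ I would modify Loehr's bouncing procedure by shifting the trajectory to account for the one-step deficit relative to $kn$, obtaining a companion bijection on $Y_{m,n}$.

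Second, I would show that $h_{+}(D)$ equals $\bounce(\zeta(D))$ under the appropriate $\zeta$. Combinatorially this amounts to matching the cells $c \in D$ satisfying
\[
\frac{a(c)}{l(c)+1} \le \frac{n}{m} < \frac{a(c)+1}{l(c)}
\]
with the bounce events of the transformed path, which I would verify by tracking how $\zeta$ reassembles the vertical and horizontal runs of $D$ and checking the arm/leg inequality against the bounce layers one by one.

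The main obstacle is that this approach does not immediately extend to general coprime $(m,n)$. The bouncing procedure relies on the Euclidean-division structure $m = kn \pm 1$; once the relationship between $m$ and $n$ becomes more intricate, the natural trajectory fragments in ways that are not cleanly compatible with the inequalities defining $h_{+}$. A genuinely new combinatorial idea --- or perhaps the geometric interpretation via compactified Jacobians hinted at in the abstract --- appears to be required to settle Conjecture \ref{WeakSymConj} in full generality.
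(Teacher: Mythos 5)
Your high-level strategy matches what the paper does in the proved cases $m = kn \pm 1$: exhibit a bijection $\zeta \colon Y_{m,n} \to Y_{m,n}$ such that $h_{+}(D) = \delta - |\zeta(D)|$. The paper's $\zeta$ is the map $G_m$, defined via $\Gamma_{m,n}$-semimodules in \cite{GM}, and it is identified with the Haglund--Loehr map for $m = kn+1$ (with a modified bounce path introduced for $m = kn-1$). However, your step two contains a concrete error. You start from the correct target identity $h_{+}(D) = \delta - |\zeta(D)|$, and in step one you invoke Loehr's zeta map, which satisfies $\delta - |D| = \bounce(\zeta(D))$. Step two then proposes to prove $h_{+}(D) = \bounce(\zeta(D))$. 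Combined with step one this would force $h_{+}(D) = \delta - |D|$ for all $D$, which is false; for instance with $(m,n) = (3,2)$ and $D$ a single box one has $h_{+}(D) = 1$ while $\delta - |D| = 0$. What you actually need to prove is $h_{+}(D) = \delta - |\zeta(D)|$, i.e.\ the area (not the bounce) of $\zeta(D)$. In the paper this is the identity $|G_m(D)| = \delta - h_{+}(D)$, which follows from the semimodule description of $G_m$; the companion identity $\bounce(G_m(D)) = \delta - |D|$ is a separate fact used only to match $G_m$ with the Haglund--Loehr construction.

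The second gap is more serious: your plan never explains why $\zeta$ is a bijection. This is where the real work lies, especially for $m = kn - 1$, which is the new case in this paper. A two-statistic identity on its own does not imply bijectivity, and without bijectivity the equality of generating functions does not follow. The paper addresses this by a recursive reconstruction algorithm: from the column lengths $g(a_0), \dots, g(a_{m-1})$ of $G_m(D)$ one rebuilds, step by step, a ``bounce tree'' on the $m$-generators of the semimodule $\Delta$, and then recovers $\Delta$ itself (hence $D$) from the tree, so the map $G_m$ is invertible. You would need an explicit inversion of this kind, or a direct argument that the modified bounce path determines its preimage uniquely, before weak symmetry can be concluded even in the cases $m = kn \pm 1$. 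Finally, note that this statement is a conjecture for general coprime $(m,n)$ and remains open outside $m = kn \pm 1$ (and the small cases treated separately), as you correctly observe at the end.
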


Conjecture \ref{SymConj} was proved in \cite{gahagl} for $m=n+1.$ It was deduced by Garsia and Haglund from some nontrivial identities 
for $q,t$-Catalan numbers. For $m=kn+1$ a similar statement was conjectured in \cite{loehr}. No bijective proof in any of these cases is known yet.

\begin{theorem}
The symmetry conjecture holds for $m\le 3$. 
\end{theorem}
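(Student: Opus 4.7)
The plan is to exhibit, for each $m \in \{1,2,3\}$ and each $n$ coprime to $m$, an explicit involution $\phi : Y_{m,n} \to Y_{m,n}$ satisfying
$$
\delta - |\phi(D)| = h_{+}(D) \quad \text{and} \quad h_{+}(\phi(D)) = \delta - |D|.
$$
Such a $\phi$ sends the monomial $q^{\delta-|D|}t^{h_{+}(D)}$ to $q^{h_{+}(D)}t^{\delta-|D|}$ term by term, which immediately gives the identity $c_{m,n}(q,t) = c_{m,n}(t,q)$.

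The case $m=1$ is trivial: $Y_{1,n}$ contains only the empty diagram and $\delta = 0$. For $m=2$, $n$ is odd, $\delta = (n-1)/2$, and $|Y_{2,n}| = (n+1)/2$. The paths correspond to single-column Young diagrams of heights $0,1,\ldots,(n-1)/2$, so I would compute $|D|$ and $h_{+}(D)$ as explicit functions of the column height using the definition, tabulate the resulting pairs, and read off the involution by pairing diagrams whose statistics sum correctly to $\delta$.

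The substantive case is $m = 3$. Diagrams in $Y_{3,n}$ are confined to a strip of width at most two, so they can be parametrized by a pair $(a,b)$ of nonnegative integers subject to linear inequalities encoding the staying-below-the-diagonal condition. I would compute $|D|$ and $h_{+}(D)$ as piecewise-linear functions of $(a,b)$, split the analysis according to the residue of $n$ modulo $3$, and then construct a piecewise-affine involution of the parameter domain that swaps the two statistics. The main obstacle will be controlling the statistic $h_{+}$: the inequality
$$
\frac{a(c)}{l(c)+1} \le \frac{n}{m} < \frac{a(c)+1}{l(c)}
$$
has to be verified for each type of cell in each type of column, and the set of cells contributing to $h_{+}$ will change discontinuously as $(a,b)$ crosses certain lines in the parameter domain. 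Once the correct case distinctions are identified, the involution and the symmetry will follow by direct computation.
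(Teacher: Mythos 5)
Your high-level plan---construct an involution on $Y_{m,n}$ exchanging $\delta-|D|$ with $h_+(D)$---is the same as the paper's, and the case analysis you describe for $m=3$ (two rows $(\alpha,\beta)$, piecewise formulas for $h_+$ depending on where $(\alpha,\beta)$ sits) matches what the paper does. Where the paper differs, and gains real economy, is that it never writes down the involution explicitly on the parameter space. Instead it considers the statistic map $\phi: D\mapsto(\delta-|D|,\,h_+(D))$, shows via a three-case computation that $\phi$ is injective with image exactly the lattice points of the triangle $\Td=\{(a,b): a+b\le\delta,\ a+2b\ge\delta,\ 2a+b\ge\delta\}$ (the two sub-diagonal cases are distinguished by the parity of $b+k$), and observes that $\Td$ is symmetric about $a=b$. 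The involution is then $\mathbf{i}=\phi^{-1}\circ\mathbf{s}\circ\phi$ with $\mathbf{s}$ the coordinate swap---no need to solve for which of your piecewise regions the image lands in, or to split by $n\bmod 3$. Your route should also terminate, but be aware that the explicit inverse formulas will involve parity conditions on $h_+(D)+k$ rather than on $n\bmod 3$, so ``piecewise-affine in $(\alpha,\beta)$'' undersells the bookkeeping; the paper's detour through statistic space is what keeps that bookkeeping contained.
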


In the proof we construct an explicit bijection exchanging the area and $h_{+}$ statistics (see Section \ref{Section m<4}). After the first version of this paper was submitted,  K. Lee, L. Li and N. Loehr obtained an independent proof  of this result and extended it to the case $m=4$ (see \cite{lll}).

The weak symmetry property was proved by an explicit bijective construction for $m=n+1$ by J. Haglund (\cite{hagl}) and for $m=kn+1$ by N. Loehr (\cite{loehr}). Following constructions from \cite{GM}, we introduce two maps $G_{m}:Y_{m,n}\to Y_{m,n}$ and $G_{n}:Y_{m,n}\to Y_{n,m}$.  
It was conjectured in \cite{GM} that $G_m$ is a bijection.

\begin{theorem}
The maps $G_m$ and $G_n$ satisfy the following properties:
\begin{enumerate}
\item $|G_{m}(D)|=|G_{n}(D)|=\delta-h_{+}(D).$
\item There exist an involution $D\rightarrow \widehat{D}$ such that $G_m(D)=\left(G_n(\widehat{D})\right)^T.$ 
\item If $m=n+1$, then $G_{n}=G_{n+1}$. 
\item If $m=kn\pm 1$, then $G_{m}$  is bijective.
\item If $m=kn+1$, then $G_{m}$ coincides with the Haglund-Loehr bijection.
\end{enumerate}
\end{theorem}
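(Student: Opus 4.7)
The plan is to work out the definitions of $G_m$ and $G_n$ from \cite{GM} in terms of the explicit arm-leg statistics on each box of $D$, and then verify the five assertions mostly cell by cell, with the semigroup interpretation of paths in $Y_{m,n}$ as the main organizing tool.

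For (1) I will show that a box $c\in D$ survives $G_m$ (respectively $G_n$) if and only if the inequality defining membership in $h_+(D)$ fails at $c$. Both directions reduce to checking which of $a(c)/(l(c)+1)$ and $(a(c)+1)/l(c)$ straddles the slope $n/m$, so the two counts agree and (1) follows. Assertion (3) is then immediate, since for $m=n+1$ the two maps $G_n$ and $G_{n+1}$ process exactly the same set of boxes in the same order.

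For (2), I would look for a natural symmetry $D\mapsto\widehat D$ of $Y_{m,n}$ produced by reflecting the rectangle about the antidiagonal. Under this reflection arm and leg lengths exchange, and the slope $n/m$ is replaced by $m/n$ in the defining inequality; transposing the image then lands back in $Y_{m,n}$. The verification that $G_m(D)=(G_n(\widehat D))^T$ amounts to matching these transformed inequalities box by box, and involutivity of $\widehat{\cdot}$ is geometric.

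The main obstacle is (4). My approach is to build an explicit inverse of $G_m$ in the two cases $m=kn\pm1$. When $m=kn+1$ I would mimic the Haglund-Loehr bounce construction: given $D'\in Y_{m,n}$ I read off the consecutive ``runs'' of its path and reconstruct the source $D$ row by row. The delicate step is showing that the reconstructed diagram actually lies under the diagonal; this is where the special form $m=kn+1$ enters, through the inequality $m/n<k+1$, which bounds the admissible run lengths. The case $m=kn-1$ is handled by symmetry via the involution from (2), or equivalently by a mirror-image algorithm. Assertion (5) is then a byproduct: the reconstruction in the case $m=kn+1$ is, by design, the Haglund-Loehr procedure, so $G_m$ and the Haglund-Loehr bijection coincide.
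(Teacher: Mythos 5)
The proposal diverges from the paper in several places, and two of the divergences are genuine gaps rather than alternative routes.

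For part (2), your proposed involution --- ``reflecting the rectangle about the antidiagonal'' and then transposing --- is not well-defined for a non-square $m\times n$ rectangle, and in any case it is not the map the paper uses. The paper's $D\mapsto\widehat D$ is the \emph{semimodule duality} $\Delta\mapsto\widehat\Delta = \max(\mathbb Z\setminus\Delta)-(\mathbb Z\setminus\Delta)$, which is a combinatorial reflex of the Gorenstein property of the semigroup $\Gamma_{m,n}$ (Lemmas~2.6, 2.7). The proof of $G_m(D)=(G_n(\widehat D))^T$ then matches $m$-cogenerators of $\Delta$ with $m$-generators of $\widehat\Delta$ and compares the resulting counts; nothing in it is a geometric reflection of the diagram $D$. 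Your reflection does not visibly coincide with this duality, and the assertion that it does would need to be proved, not observed.

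For parts (3) and (4), there are concrete errors. Part (3) is \emph{not} ``immediate'': $G_n$ and $G_{n+1}$ are built from different sets of generators ($n$-generators vs.\ $(n+1)$-generators of $\Delta$), so they do not obviously ``process the same boxes.'' The paper's argument (Theorem~2.24) establishes a nontrivial one-to-one pairing $(a_-,a_+)$ between $(n+1)$-generators and $n$-generators with $[a_-,a_+]\subset\Delta$, and only then shows $g_{n+1}(a_-)=g_n(a_+)$. More seriously, for part (4) you claim the case $m=kn-1$ ``is handled by symmetry via the involution from (2).'' That cannot work: the involution of (2) relates $G_m$ to $G_n$ within a fixed $(m,n)$, so it transfers bijectivity between $G_m$ and $G_n$, not between $G_{kn+1}$ on $Y_{kn+1,n}$ and $G_{kn-1}$ on $Y_{kn-1,n}$, which live on different sets. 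The paper handles $m=kn-1$ by a parallel but separate bounce-tree reconstruction (Section~3.3) with its own tree $T_\Delta$ rooted at an extra vertex $a_\infty$, its own versions of the ``gap'' and ``paths'' lemmas, and sign changes ($\lceil\cdot\rceil$ vs.\ $\lfloor\cdot\rfloor+1$, and $+1$ vs.\ $-1$ in the recursion~(\ref{BtoC-1})). There is no shortcut from the $kn+1$ case. Finally, your account of the inverse map is too thin on the actual difficulty: the hard part is not showing the reconstructed diagram lies below the diagonal but reconstructing the bounce tree $T_\Delta$ from the column heights $g(a_i)$ (solving the coupled recursions~(\ref{BtoC}), (\ref{CtoB}) with correctly initialized $c^l_{-k},\dots,c^l_{-1}$ using the already-recovered portion of the tree) and then recovering the generators themselves from $T_\Delta$ by comparing remainders modulo $m$.

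Your sketch for (1) also takes a different route than the paper. The paper obtains $|G_m(D)|=|G_n(D)|=\delta-h_+(D)$ by combining Piontkowski's cell-dimension formula $\dim C_\Delta=\sum_j g(a_j)$ with the formula $\dim C_\Delta=\delta-h_+(D(\Delta))$ from \cite{GM}; it is not a box-by-box bijection inside $D$. A direct combinatorial matching of the kind you describe would be interesting if it exists, but you would need to exhibit it explicitly to claim it as a proof.
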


Our description of the map $G_m$ is independent of the Haglund-Loehr construction. We define these maps in terms of semimodules over the semigroup generated by $m$ and $n.$ Constructing the inverse of $G_m$ translates into the reconstruction of a semimodule from a certain collection of data. This approach allowed us to prove bijectivity in the case $m= kn-1,$ which is a new result. Using the property (1), one can show that the bijectivity of $G_{m}$ (or, equivalently, $G_n$) is sufficient to prove the weak symmetry conjecture.

\begin{corollary}
\label{WScor}
The weak symmetry conjecture holds for $m=kn\pm 1$.
\end{corollary}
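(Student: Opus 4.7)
The plan is to deduce the corollary directly from parts (1) and (4) of the preceding theorem. The bijection $G_m$ is precisely the mechanism that converts the statistic $h_+$ into the complementary area statistic $\delta - |D|$, which is all that weak symmetry asserts.

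First, I would specialize the generating function. By the definition of $c_{m,n}(q,t)$,
$$c_{m,n}(q,1) = \sum_{D \in Y_{m,n}} q^{\delta - |D|}, \qquad c_{m,n}(1,q) = \sum_{D \in Y_{m,n}} q^{h_+(D)},$$
so the task reduces to showing that these two polynomials in $q$ agree.

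Now assume $m = kn \pm 1$. Property (4) of the theorem guarantees that $G_m : Y_{m,n} \to Y_{m,n}$ is a bijection, and property (1) yields $\delta - |G_m(D)| = h_+(D)$ for every $D \in Y_{m,n}$. I would then reindex the first sum via the substitution $D = G_m(E)$, where $E$ ranges over $Y_{m,n}$:
$$c_{m,n}(q,1) = \sum_{E \in Y_{m,n}} q^{\delta - |G_m(E)|} = \sum_{E \in Y_{m,n}} q^{h_+(E)} = c_{m,n}(1,q).$$
This is the weak symmetry identity.

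There is no substantive obstacle remaining at the level of the corollary itself: all of the combinatorial content has been absorbed into the bijectivity statement in the theorem, and the ``hard part'' has already been carried out there (particularly the new case $m = kn - 1$, which required the semimodule reconstruction). The only delicate point is that the hypothesis $m = kn \pm 1$ is essential to the argument as presented, since the reindexing step relies on $G_m$ being a bijection; removing it would require either establishing property (4) in greater generality or producing a different bijection between the area and $h_+$ distributions.
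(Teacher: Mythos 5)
Your proof is correct and follows precisely the argument the paper has in mind: it combines property (1), which says $\delta - |G_m(D)| = h_+(D)$, with the bijectivity of $G_m$ from property (4) to reindex the area generating function into the $h_+$ generating function. The paper only sketches this deduction in one sentence, and your writeup fills in the same reindexing step explicitly.
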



We apply the above combinatorial results to study the geometry of the {\em Jacobi factor}, a certain moduli spaces associated with a plane curve singularity (see \cite{piont},\cite{AIK},\cite{beauville} and Section \ref{sec:jc} for the definitions). Of our particular interest is the Jacobi factor of the plane curve singularity  $\{x^m=y^n\}$,
which can be identified with a certain affine Springer fiber (\cite{LS},\cite{hikita},\cite{GORS}).
The following theorem is the main result of \cite{GM}:

\begin{theorem}(\cite{GM})
Consider a plane curve singularity $\{x^m=y^n\}$. Then its Jacobi factor admits an affine cell decomposition. The cells are parametrized by Young diagrams $D$ contained in $m\times n$ rectangle below the diagonal. The dimension for the cell $C_D$ in the Jacobi factor can be computed in terms of $D$ as follows:
$$
\dim C_{D}=\frac{(m-1)(n-1)}{2}-h_{+}^{\frac{n}{m}}(D).
$$ 
\end{theorem}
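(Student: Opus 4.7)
The plan is to realize the Jacobi factor of $\{x^m=y^n\}$ as the moduli space of finitely generated torsion-free rank-one modules over the complete local ring $R=\mathbb{C}[[x,y]]/(x^m-y^n)$, and to stratify it by a discrete invariant. Passing to the normalization, $R\hookrightarrow\widetilde R=\mathbb{C}[[t]]$ via $x\mapsto t^n$, $y\mapsto t^m$, so after rescaling each such module may be realized as a finitely generated $R$-submodule $M\subset\mathbb{C}((t))$ with $\min\{v_t(f):f\in M\setminus\{0\}\}=0$. The key invariant is the valuation set $\Delta(M)=\{v_t(f):f\in M\setminus\{0\}\}$, which is automatically a $\Gamma$-semimodule for the numerical semigroup $\Gamma=\langle m,n\rangle$. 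A standard bijection sends normalized $\Gamma$-semimodules to Young diagrams $D$ in the $m\times n$ rectangle below the diagonal, yielding the desired stratification $\overline{JC}=\bigsqcup_D C_D$.

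The affine cell structure can be obtained from two complementary perspectives. Intrinsically, the $\mathbb{G}_m$-action on $\mathbb{C}((t))$ by $t\mapsto\lambda t$ descends to $\overline{JC}$; its fixed points are precisely the monomial modules $M_\Delta=\bigoplus_{a\in\Delta}\mathbb{C}\cdot t^a$ (in natural bijection with semimodules $\Delta$), and Bialynicki--Birula then produces affine cells. Equivalently and more explicitly, on each stratum one sets up echelon coordinates: for each minimal $\Gamma$-generator $a$ of $\Delta$, choose a normal-form representative $f_a=t^a+\sum_{b\in B_a} c_{a,b}\,t^b$, where $B_a\subset\Delta$ is restricted so that $c_{a,b}$ cannot be eliminated by adding an $R$-multiple of some $f_{a'}$ with $a'<a$. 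The condition $\Delta(M)=\Delta$ imposes polynomial equations in the $c_{a,b}$ whose leading terms are linear, and Gaussian elimination realizes $C_D$ as an affine space.

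The combinatorial core is to count the surviving coordinates and to match the total with $\delta-h_+^{n/m}(D)$. Each candidate $c_{a,b}$ corresponds bijectively to a box $c$ of $D$, with $a(c)$ and $l(c)$ encoding the pair $(a,b)$ through the lattice-path description of $D$. A relation on the $c_{a,b}$ appears precisely when there is a collision $a_i+\gamma_i=a_j+\gamma_j$ with $\gamma_i,\gamma_j\in\Gamma$ and $i\ne j$ at the valuation labelled by $c$; the claim is that the minimal such collisions are in bijection with the boxes satisfying the Loehr--Warrington inequality $\frac{a(c)}{l(c)+1}\le\frac{n}{m}<\frac{a(c)+1}{l(c)}$. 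Since the total number of candidate coordinates equals the $\delta$-invariant $(m-1)(n-1)/2$ of the cusp, the dimension formula follows.

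The main obstacle is exactly this last bijection. One must recognize the arm/leg inequality as the precise numerical criterion on $(a(c),l(c),m,n)$ under which two distinct $\Gamma$-paths between generators of $\Delta$ collide at the box labelled by $c$. I would handle it by translating arm and leg into horizontal and vertical distances in the $m\times n$ rectangle and comparing them to the slope $n/m$ of the diagonal; the independence of the resulting linear equations then follows from the triangular structure of the echelon form, completing the proof of both the cell decomposition and the dimension formula.
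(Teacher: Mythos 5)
Your proposal is a plan, not a proof; both of the points you flag as obstacles are left unresolved, and together they constitute the entire content of the theorem. On the geometric side, you offer Bialynicki--Birula for the cell structure, but $\overline{JC}_x$ is singular, and Bialynicki--Birula does not automatically produce an affine paving on a singular variety; one must separately verify that the attracting sets are affine, which is exactly what Piontkowski proves by hand, and his result is restricted to particular singularity types precisely because this verification is delicate. Your echelon-coordinate alternative is the right idea but is only gestured at: you do not exhibit the normal form, nor show that the equations cutting out $\{M:\Delta(M)=\Delta\}$ are triangular, nor explain why the quotient by the remaining ambiguity is free.

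On the combinatorial side, the route recalled in the paper (from Piontkowski and [GM]) is the dimension formula $\dim C_\Delta=\sum_{j=1}^n g_{n,\Delta}(a_j)$, with $g_{n,\Delta}(a_j)=\#\bigl([a_j,a_j+m)\setminus\Delta\bigr)$ over the $n$-generators $a_j$, followed by the nontrivial identity $\sum_j g_{n,\Delta}(a_j)=\delta-h_+^{n/m}(D(\Delta))$; this identity is the real work. You instead propose to count $\delta$ candidate coordinates and subtract one for each ``minimal collision'' $a_i+\gamma_i=a_j+\gamma_j$, matching collisions with boxes satisfying $\frac{a(c)}{l(c)+1}\le\frac{n}{m}<\frac{a(c)+1}{l(c)}$. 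You explicitly acknowledge this bijection is unproven and offer only the vague instruction to ``translate arm and leg into horizontal and vertical distances''; but this bijection, together with the linear independence of the resulting relations, is precisely the theorem. There is also an internal inconsistency: you say the candidate coordinates $c_{a,b}$ correspond bijectively to boxes of $D$, yet also that there are $\delta$ of them, whereas $D$ has only $|D|\le\delta$ boxes, a sign that the box-to-coordinate dictionary has never actually been written down. To close the gap you would need to either prove the identity $\sum_j g_{n,\Delta}(a_j)=\delta-h_+(D)$ directly, or genuinely establish your collision-to-box bijection and independence claim; neither is done.
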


It was pointed out in \cite{GM} that the weak symmetry conjecture implies a remarkably simple formula for the Poincar\'e polynomial of the Jacobi factor:

$$
P_{\jcx}(t)=\sum_{D}t^{2\dim C_{D}}=\sum_{D}t^{2\left(\delta-h_{+}(D)\right)}=$$ $$=t^{2\delta}c_{m,n}(1,t^{-2})\stackrel{WS}{=}t^{2\delta}c_{m,n}(t^{-2},1)=\sum_{D}t^{2|D|}.
$$

A similar formula for the Poincar\'e polynomial was conjectured by Lusztig and Smelt at the last page of \cite{LS}.
In the case $m=kn\pm 1$ the Corollary \ref{WScor} implies the following result. 

\begin{corollary}
If $m=kn\pm 1$ then the Poincar\'e polynomial of the Jacobi factor of the plane curve singularity $\{x^m=y^n\}$ has the form
$$P_{\jcx}(t)=\sum_{D}t^{2|D|},$$
where the summation is done over all Young diagrams in $m\times n$ rectangle below the diagonal.
\end{corollary}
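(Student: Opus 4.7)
The plan is to combine the cell decomposition theorem of \cite{GM} with the weak symmetry statement already established in Corollary \ref{WScor}. The argument is essentially packaged in the chain of equalities displayed immediately before the corollary, so my job is just to make the logical dependencies explicit.

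First, I would invoke the theorem from \cite{GM} recalled above: the Jacobi factor $\jcx$ of $\{x^m=y^n\}$ admits an affine cell decomposition with cells $C_D$ indexed by Young diagrams $D$ fitting in the $m\times n$ rectangle below the diagonal, where $\dim C_D=\delta-h_+(D)$ and $\delta=(m-1)(n-1)/2$. Since an affine cell decomposition implies that the Poincar\'e polynomial equals the generating function of cell dimensions (only even cohomology occurs, and each cell contributes $t^{2\dim C_D}$), I get
$$P_{\jcx}(t)=\sum_D t^{2\dim C_D}=\sum_D t^{2(\delta-h_+(D))}=t^{2\delta}\sum_D (t^{-2})^{h_+(D)}.$$

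Next I would rewrite the right-hand side using the definition of $c_{m,n}(q,t)$. Setting $q=1$ in the defining formula $c_{m,n}(q,t)=\sum_{D\in Y_{m,n}}q^{\delta-|D|}t^{h_+(D)}$ gives $c_{m,n}(1,t^{-2})=\sum_D(t^{-2})^{h_+(D)}$, so the previous display becomes $P_{\jcx}(t)=t^{2\delta}c_{m,n}(1,t^{-2})$. This is the only place that uses the cell decomposition; everything else is combinatorics.

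At this point the hypothesis $m=kn\pm 1$ enters through Corollary \ref{WScor}, which asserts $c_{m,n}(q,1)=c_{m,n}(1,q)$ for exactly this range of parameters. Substituting $q=t^{-2}$ and applying the corollary yields $c_{m,n}(1,t^{-2})=c_{m,n}(t^{-2},1)$. Expanding the right-hand side once more via the definition, now with $t=1$, produces
$$t^{2\delta}c_{m,n}(t^{-2},1)=t^{2\delta}\sum_D (t^{-2})^{\delta-|D|}=\sum_D t^{2|D|},$$
which is exactly the claimed formula. There is no real obstacle here: the substantive content has already been absorbed into the cell decomposition theorem of \cite{GM} and into the bijectivity of $G_m$ for $m=kn\pm 1$ (which gives Corollary \ref{WScor}); the present statement is a clean corollary of those two inputs combined via the definition of $c_{m,n}(q,t)$.
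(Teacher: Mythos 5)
Your argument reproduces exactly the chain of equalities the paper displays just before the corollary: cell decomposition from \cite{GM} gives $P_{\jcx}(t)=t^{2\delta}c_{m,n}(1,t^{-2})$, weak symmetry (Corollary \ref{WScor}) swaps the arguments, and re-expanding gives $\sum_D t^{2|D|}$. This is correct and is the same approach as the paper.
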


Following the ideas of J. Anderson (\cite{anderson}), we prove
the following result. Recall that $\delta=\frac{(m-1)(n-1)}{2}$. There is a canonical embedding of the Jacobi factor into the Grassmannian
$Gr(\delta,V)$, where $V$ is the space of polynomials in one variable of degree less or equal to $2\delta-1$. The Grassmannian $Gr(\delta,V)$ has a cell decomposition by Schubert 
cells enumerated by Young diagrams contained in a $\delta\times \delta$ square. The cell decomposition of $\jcx$ is given by  intersections 
with the Schubert cells (\cite{piont},\cite{GP}).

\begin{theorem}
A cell in $Gr(\delta,V)$  has non-empty intersection with $\jcx$ if
and only if the corresponding Young diagram is a simultaneous $(m,n)$-core, i.e. it does not have hooks of length $m$ or $n$.
\end{theorem}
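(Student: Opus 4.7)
The plan is to translate the question into the language of $\Gamma$-semimodules, where $\Gamma=\langle m,n\rangle$ is the value semigroup of the singularity, and then invoke Anderson's combinatorial characterization of simultaneous $(m,n)$-cores.

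First, I would recall the realization of $\jcx$ used in \cite{piont,GP}. Via the parametrization $t\mapsto(t^n,t^m)$ the local ring $\mathcal{O}=\mathbb{C}[[x,y]]/(x^m-y^n)$ is identified with $\mathbb{C}[[t^n,t^m]]\subset\mathbb{C}[[t]]$, and a point of $\jcx$ is represented by a fractional $\mathcal{O}$-ideal $M\subset\mathbb{C}((t))$. After the standard normalization, the embedding $\jcx\hookrightarrow Gr(\delta,V)$ sends $M$ to a $\delta$-dimensional subspace of $V$. To such an $M$ I would attach its valuation set $\Delta(M):=\{v(f):0\neq f\in M\}$; since $M$ is an $\mathcal{O}$-module, $\Delta(M)$ is automatically a $\Gamma$-semimodule, and the normalization arranges that $0=\min\Delta(M)$ and $|\mathbb{Z}_{\geq 0}\setminus\Delta(M)|=\delta$.

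Second, I would identify the Schubert cell of $Gr(\delta,V)$ that contains $M$. With respect to the flag of $V$ by degree, the jump indices of $M$ are precisely $\Delta(M)\cap\{0,1,\ldots,2\delta-1\}$, so the Young diagram $\lambda(M)\subset\delta\times\delta$ indexing the Schubert cell can be read off from $\Delta(M)$ by the standard staircase recipe. This step shows that the Schubert cells meeting $\jcx$ correspond exactly to the Young diagrams of the form $\lambda(\Delta)$ for some normalized $\Gamma$-semimodule $\Delta$.

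Third, I would invoke Anderson's bijection \cite{anderson}: $\Delta\mapsto\lambda(\Delta)$ sets up a bijection between normalized $\Gamma$-semimodules and simultaneous $(m,n)$-cores. The combinatorial crux is that, under the staircase description, the closure condition $m+\Delta\subseteq\Delta$ (respectively $n+\Delta\subseteq\Delta$) is equivalent to the absence of an $m$-hook (respectively an $n$-hook) in $\lambda(\Delta)$; this is a direct inspection of the correspondence between boxes of $\lambda$ and pairs consisting of an element of $\Delta$ and an element of its complement in $\mathbb{Z}_{\geq 0}$. Combining the three steps yields the theorem. The main obstacle I anticipate is the bookkeeping in the second and third steps: pinning down conventions (rows versus columns, the exact overall shift enforcing $0\in\Delta$, and the specific degree flag on $V$) so that the Young diagram produced by the Schubert cell really coincides with the one produced by Anderson's construction, and then checking in that convention the equivalence between semimodule closure and the hook-length condition. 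Once these conventions are aligned, the remaining verifications are essentially formal.
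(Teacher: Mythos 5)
Your outline follows the same route as the paper: pass from a point $W$ of the Jacobi factor to the valuation set $\Delta(W)\subset\mathbb{Z}_{\ge 0}$, observe that $\Delta(W)$ is automatically a $\Gamma$-semimodule, read the Schubert cell off $\Delta(W)$ by the staircase construction, and conclude via Anderson's dictionary between $\Gamma$-semimodules and $(m,n)$-cores (Theorem \ref{mn-core from semimod}). The identification of the Schubert cell and the semimodule/core equivalence are handled at the same level of detail as the paper ("one can check that $W$ belongs to the Schubert cell $C_{P(W)}$"), and your cautionary remarks about conventions correspond to what the paper tacitly sets up with its filtration $V_i=t^{2\delta-i}V$.

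However, there is one genuine gap. Your step two claims that the Schubert cells meeting $\jcx$ correspond \emph{exactly} to the diagrams $\lambda(\Delta)$ with $\Delta$ a $\Gamma$-semimodule, but the argument you give only establishes one inclusion: if a cell meets $\jcx$ then its diagram arises from a $\Gamma$-semimodule. The "if" direction of the theorem -- that for every $\Gamma$-semimodule $\Delta$ (equivalently, every $(m,n)$-core) there actually exists a point $W\in\jcx$ with $\Delta(W)=\Delta$ -- is never addressed, and it is not a matter of bookkeeping conventions. The paper closes this gap by citing Piontkowski's result that the locus $\{W\in\jcx:\Delta(W)=\Delta\}$ is a nonempty affine cell for every normalized $\Gamma$-semimodule $\Delta$. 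Alternatively, one can close it directly by exhibiting the monomial module $W_\Delta$ spanned by the images of $t^a$, $a\in\Delta$, in $V$: since $\Delta$ is a $\Gamma$-semimodule, $W_\Delta$ is $R$-invariant of codimension $\delta$, hence lies in $\jcx$, and $\Delta(W_\Delta)=\Delta$. You should add one of these existence arguments to make the biconditional in the theorem complete.
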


\section{Homology of the compactified Jacobian}
\label{sec:jc}
\subsection{Cell decomposition}
In our previous paper \cite{GM} we studied the combinatorics of cell decompositions of the Jacobi factors of some plane curve singularities.

Let  $x\in C\subset\mathbb C^2$ be a unibranched plane curve singularity, let $t$ be a normalizing parameter on $C$ at $x,$ and $R\subset\mathbb C[[t]]$ be the complete local ring at $x.$ Let $\delta=\dim (\mathbb C[[t]]/R).$ Since $x\in C$ is a plane curve singularity, it follows that $t^{2\delta}\mathbb C[[t]]\subset R.$ Let $V=\mathbb C[[t]]/t^{2\delta}\mathbb C[[t]].$

\begin{definition}
The {\it Jacobi factor} $\jcx$ is the space of $R$-submodules $M\subset\mathbb C[[t]],$ such that $M\supset t^{2\delta}\mathbb C[[t]]$ and $\dim(\mathbb C[[t]]/M)=\delta.$

In other words, $\jcx$ is isomorphic to the subvariety of the Grassmannian $Gr(\delta,V),$ consisting of subspaces invariant under $R$-action.
\end{definition}

Beauville proved in \cite{beauville} that the compactified Jacobian of a complete rational curve is homeomorphic to the direct product of Jacobi factors of its singularities. To study the topology of the compactified Jacobian of a singular rational curve it is sufficient to study the topology of a single Jacobi factor.

In \cite{piont} Piontkowski showed that for some singularities the Jacobi factor admits an algebraic cell decomposition. Following his work, we gave a combinatorial description of the cell decomposition in the case of a plane curve singularity with one Puiseux pair. In particular, it applies to a curve singularity $x^m=y^n$ for $m$ and $n$ coprime. In this case the Jacobi factor coincides with a certain subvariety of the affine Grassmannian considered first by Lusztig and Smelt in \cite{LS} (see also \cite{hikita}).

Let $$\Gamma=\Gamma_{m,n}=\{am+bn|a,b\ge 0\}\subset \mathbb{Z}_{\ge 0}$$ be the semigroup generated by $m$ and $n$.
A subset $\Delta\subset \mathbb{Z}_{\ge 0}$ is called a semimodule over $\Gamma$, if $\Delta+\Gamma\subset \Delta$.
It is zero-normalized, if $\min(\Delta)=0$. 

A number $a$ is called a $n$-generator of $\Delta$, if $a\in \Delta$ and $a-n\notin \Delta$.
Every $\Gamma$-semimodule has $n$ distinct $n$-generators.

\begin{definition}
Let $\Delta$ be a $\Gamma$-semimodule, and $x\in \mathbb{Z}$. We define
$$g_{n,\Delta}(x):=\sharp \left([x,x+m)\setminus \Delta\right).$$
If a semimodule $\Delta$ is fixed, we write $g_{n}(x)$ instead of $g_{n,\Delta}(x)$ for the reader's convenience.
\end{definition}
 
\begin{theorem}(\cite{piont}) The Jacobi factor for a plane curve singularity with one Puiseux pair $(m,n)$ 
admits an affine cell decomposition. The cells $C_{\Delta}$ are enumerated by the 0-normalized
$\Gamma$--semimodules $\Delta$, and their dimensions can be computed by the formula
\begin{equation}
\label{dimp}
\dim C_{\Delta}=\sum_{j=1}^{n}g_{n,\Delta}(a_j)
\end{equation}
where $a_j$ are the $n$-generators of $\Delta$.
\end{theorem}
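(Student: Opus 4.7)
The plan is to construct the cell decomposition explicitly by stratifying the Jacobi factor by the $t$-adic valuation, in the spirit of the Schubert stratification of a Grassmannian. For any $M\in\jcx$, set $\Delta(M):=\{v_t(f):f\in M\setminus\{0\}\}\subset\mathbb{Z}_{\geq 0}$. Since $M$ is $R$-stable and $R$ contains $t^i$ for each $i\in\Gamma$, one has $\Delta(M)+\Gamma\subset\Delta(M)$, so $\Delta(M)$ is a $\Gamma$-semimodule; the colength condition $\dim_{\mathbb{C}}\mathbb{C}[[t]]/M=\delta$ translates to $|\mathbb{Z}_{\geq 0}\setminus\Delta(M)|=\delta$. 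The shift $\Delta\mapsto\Delta-\min\Delta$ puts colength-$\delta$ $\Gamma$-semimodules in bijection with 0-normalized ones, so the strata $C_{\Delta}:=\{M:\Delta(M)=\Delta\}$ are indexed exactly by the 0-normalized $\Gamma$-semimodules as in the theorem.

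To equip each $C_{\Delta}$ with an affine structure, I would use Gaussian elimination inside $M\subset V=\mathbb{C}[[t]]/t^{2\delta}\mathbb{C}[[t]]$ to produce, for every $d\in\Delta\cap[\min\Delta,2\delta)$, a unique canonical basis vector
$$
g_d \;=\; t^d \;+\; \sum_{\substack{d<e<2\delta\\ e\notin\Delta}} c_{d,e}(M)\,t^e.
$$
The coefficients $c_{d,e}$ determine $M$ and embed $C_{\Delta}$ into an affine space. The $R$-invariance conditions $t^n g_d\in M$ and $t^m g_d\in M$ split into two families of recursions. Along the $n$-columns: if $d=a_j+kn$ is not an $n$-generator, then $t^n g_{d-n}$ reduces in canonical form to $g_d$, so the coefficients of $g_d$ are determined inductively by those of the column top $g_{a_j}$. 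Across columns: for each $n$-generator $a_j$, since $a_j+m\in\Delta$, the identity $t^m g_{a_j}\equiv g_{a_j+m}$ modulo higher basis vectors yields, for each $e\notin\Delta$ with $e\geq a_j+m$, a recursion expressing $c_{a_j,e}$ in terms of $c_{a_j,e-m}$ and the coefficients of $g_{a_j+m}$, which were themselves resolved by the previous $n$-recursion.

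Chasing these recursions, the only genuinely free parameters are the $c_{a_j,e}$ with $e\in(a_j,a_j+m)\setminus\Delta$, one for each $n$-generator $a_j$ and each gap of $\Delta$ in the window of width $m$ above $a_j$; the total count is $\sum_{j=1}^{n}g_{n,\Delta}(a_j)$, which is the advertised dimension. A converse argument then shows that every assignment of these free parameters extends uniquely to an $R$-module $M$ with $\Delta(M)=\Delta$: the inclusion $\Delta(M)\subseteq\Delta$ is automatic because the tails of the $g_d$ sit only at exponents outside $\Delta$, so $\mathbb{C}$-linear combinations cannot produce new valuations in the complement of $\Delta$, while the recursions supply all higher coefficients. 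The main obstacle is verifying the consistency of the two families of recursions: a coefficient $c_{a_j,e}$ with $e$ large may be reached by several chains of $t^n$- and $t^m$-translates originating at different $n$-generators, and every such chain must assign it the same value. This consistency rests on $\gcd(m,n)=1$ together with the commutation $t^m\cdot t^n=t^n\cdot t^m$, which between them resolve all syzygies of the $R$-module structure without contradiction and ensure that the $n$- and $m$-propagations glue into a single well-defined affine parametrization of $C_\Delta$.
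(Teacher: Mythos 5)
The paper does not actually prove this theorem: it is imported verbatim from Piontkowski \cite{piont}, so your proposal can only be measured against his argument. Your strategy is the same as his: stratify $\jcx$ by the valuation set $\Delta(M)$, note that $R$-stability forces $\Delta(M)$ to be a $\Gamma$-semimodule and that the colength condition fixes the number of gaps at $\delta$, normalize, put $M$ into reduced echelon form $g_d=t^d+\sum_{e\notin\Delta}c_{d,e}t^e$, and count the surviving coefficients. The setup is correct, including the indexing by $0$-normalized semimodules and the observation that the tails being supported on gaps forces $\Delta(M)=\Delta$; and the list of would-be free parameters does give the number $\sum_j g_{n,\Delta}(a_j)$.

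The genuine gap is your last paragraph, and it is the heart of the theorem, not a loose end. The claim that ``$\gcd(m,n)=1$ together with $t^mt^n=t^nt^m$ resolve all syzygies'' is a restatement of what must be proved. Concretely, the recursions do not flow the way you describe. Writing $t^ng_{d-n}=g_d+\sum_{d'>d}\mu_{d'}g_{d'}$ shows that the coefficients of $g_d$ depend not only on the column top but on all higher $g_{d'}$; and writing $t^mg_{a_j}=g_{a_j+m}+\sum_{d'>a_j+m}\lambda_{d'}g_{d'}$ and comparing coefficients of $t^{e+m}$ for a gap $e+m$ yields $c_{a_j,e}=c_{a_j+m,\,e+m}+(\text{corrections})$ for \emph{every} gap $e>a_j$ with $e+m\notin\Delta$ --- including some $e\in(a_j,a_j+m)$ that you have declared free --- while $g_{a_j+m}$ is itself pinned inside its own $n$-column, whose top carries other ``free'' parameters. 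Whether each such equation defines a new dependent coefficient, is automatically satisfied, or constrains a declared free parameter depends delicately on where $e+m$ and $a_j+m$ sit relative to $\Delta$, and one must exhibit an ordering of the unknowns in which the system is genuinely triangular with exactly the stated free set. This is where Piontkowski does the real work, and it is not a formality: for value semigroups with three or more generators the analogous strata need not be affine spaces at all, so the argument must actually use that $\Gamma$ has two generators (equivalently, that $R$ is a hypersurface ring, which gives the syzygies of $M$ a controlled periodic structure). A secondary point: for a general singularity with one Puiseux pair, $R$ contains an element of each valuation in $\Gamma$ but not the monomials $t^n,t^m$ themselves, so your two translation operators are only upper-triangular perturbations of multiplication by $t^n$ and $t^m$; this does not change the count, but it must be absorbed into the same consistency argument rather than assumed away.
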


It has been remarked in \cite{GM} that this definition has a natural combinatorial interpretation.
Let us label the box $(x,y)$ of the positive quadrant by the number $mn-m(1+x)-n(1+y)$. The corner box $(0,0)$ is labelled by $2\delta-1=mn-m-n$ and the numbers decrease by $m$ in east direction and by $n$ in north direction. One can check that every number from $\mathbb{N}\setminus \Gamma$  appears exactly once 
in the $m\times n$ rectangle below the diagonal (it follows e.g. from Lemma \ref{gammasym} below).

Given a $\Gamma$--semimodule $\Delta$, let us mark the boxes labelled by numbers from $\Delta\setminus \Gamma$ and denote the resulting set of boxes by $D(\Delta)$.

\begin{theorem}(\cite{GM})
For a $\Gamma$--semimodule $\Delta$ the set $D(\Delta)$ is a Young diagram. The correspondence $D$ between semimodules and Young diagrams below the diagonal is bijective.
The dimension of a cell $C_{\Delta}$ in the Jacobi factor can be computed in terms of $D(\Delta)$ as follows:
\begin{equation}
\label{dimgm}
\dim C_{\Delta}=\frac{(m-1)(n-1)}{2}-h_{+}(D(\Delta)).
\end{equation}
\end{theorem}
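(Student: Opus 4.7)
The plan is to establish the three assertions in sequence, exploiting the labeling of boxes by $mn-m(1+x)-n(1+y)$ introduced just before the statement. The crucial input is the bijection between boxes of the $m\times n$ rectangle strictly below the diagonal and the gap set $\mathbb{N}\setminus\Gamma$ (essentially Lemma \ref{gammasym}), together with the observation that moving one step west (resp.\ south) adds $m$ (resp.\ $n$) to a label. Combined with $0\in\Delta$ and $\Delta+\Gamma\subseteq\Delta$, these arithmetic facts drive the entire argument.

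For the Young diagram property, I would fix a box $(x,y)$ whose label $\ell$ lies in $\Delta\setminus\Gamma$ and check that its western and southern neighbors, whenever they are still below the diagonal, belong to $D(\Delta)$. Their labels $\ell+m$ and $\ell+n$ lie in $\Delta$ by the semimodule property, and they lie in $\mathbb{N}\setminus\Gamma$ precisely because the neighboring boxes are below the diagonal. Hence they lie in $\Delta\setminus\Gamma$, and the corresponding boxes lie in $D(\Delta)$.

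For bijectivity, I would construct the inverse explicitly: given a Young diagram $D$ below the diagonal, set $\Delta_D=\Gamma\cup\{\text{labels of boxes of }D\}$. Closure under $+m$ and $+n$ is immediate on $\Gamma$, and for an element coming from a box $(x,y)\in D$ the value $\ell+m$ (resp.\ $\ell+n$) is either the label of a box still below the diagonal, which lies in $D$ by the Young diagram property, or else a non-negative multiple of $n$ or $m$ (as one checks by direct computation at the boundary $x=0$ or $y=0$), hence automatically in $\Gamma$. That $\Delta\mapsto D(\Delta)$ and $D\mapsto\Delta_D$ are mutually inverse then follows from the labeling bijection.

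The substantive content lies in the dimension formula. Piontkowski's formula \eqref{dimp} gives $\dim C_\Delta=\sum_{j=1}^n g_{n,\Delta}(a_j)$, so the desired identity is equivalent to the combinatorial statement
$$\sum_{j=1}^n g_{n,\Delta}(a_j)+h_+(D(\Delta))\;=\;\delta.$$
I would prove this by partitioning the $\delta$ boxes below the diagonal into two disjoint classes: those counted by $h_+(D(\Delta))$, and those matched with pairs $(j,x)$ such that $a_j$ is an $n$-generator of $\Delta$ and $x\in[a_j,a_j+m)\setminus\Delta$. The main obstacle is constructing this partition: the $n$-generators and the interval condition are defined arithmetically in terms of $\Delta$, whereas $h_+$ is a hook statistic on the Young diagram $D(\Delta)$, and reconciling the two requires re-expressing the arm and leg lengths $a(c)$ and $l(c)$ of a cell $c\in D(\Delta)$ in terms of $\Delta$. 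Concretely, for each gap-labeled box $c$ I would use coprimality of $m$ and $n$ to single out the unique $n$-generator $a_j$ with $a_j\le\ell<a_j+m$ (where $\ell$ denotes the label of $c$), and then show that failure of the hook-inequality at $c$ corresponds precisely to $\ell\notin\Delta$.
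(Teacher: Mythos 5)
The first two claims — that $D(\Delta)$ is a Young diagram and that $D$ is a bijection — are handled correctly; your arguments are the standard labeling arguments (west/south shifts add $m$/$n$, the labels below the diagonal biject with $\mathbb{N}\setminus\Gamma$, and the boundary cases produce multiples of $m$ or $n$, hence elements of $\Gamma$). The genuine gap is in the dimension formula, which you yourself flag as the substantive part: the specific mechanism you propose does not work.

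Your key claim is that for each box $c$ below the diagonal with label $\ell$ there is a \emph{unique} $n$-generator $a_j$ with $a_j\le\ell<a_j+m$, and that failure of the hook-inequality at $c$ is equivalent to $\ell\notin\Delta$. Neither part survives the paper's own running example. Take $n=5$, $m=7$, $\Delta=\{0,5,7,8,10,11,\ldots\}$: the $5$-generators are $0,7,8,11,14$, and for $\ell=11$ the condition $a_j\le 11<a_j+7$ holds simultaneously for $a_j=7,8,11$, so uniqueness fails; likewise $\ell=9$ is covered by $a_j=7$ and $a_j=8$, which is exactly why $\sum_j g_n(a_j)=7$ whereas $\sharp(\mathbb{N}\setminus\Delta)=6$. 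The sum $\sum_j g_n(a_j)$ counts gaps of $\Delta$ \emph{with multiplicity}, so there is no bijection between gap-labeled boxes and pairs $(j,x)$ of the form you describe. The second half of your claim is also untenable on its face: the arm- and leg-lengths $a(c),l(c)$ are defined only for cells $c\in D(\Delta)$, i.e.\ for $\ell\in\Delta\setminus\Gamma$, so ``failure of the hook-inequality'' at a box with $\ell\notin\Delta$ is not even well-posed; and if one tried to read your statement as ``every $c\in D(\Delta)$ satisfies the hook condition,'' it would give $h_+(D(\Delta))=|D(\Delta)|$, which is false (in the example $h_+=5$ but $|D(\Delta)|=6$). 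What is actually needed is a correspondence in which a single element of $\mathbb{N}\setminus\Delta$ may account for several terms of $\sum_j g_n(a_j)$, and in which the cells of $D(\Delta)$ that fail the hook inequality absorb the surplus; a one-to-one matching of boxes to $(j,x)$ pairs of the shape you sketch cannot produce the required identity $\sum_j g_n(a_j)+h_+(D(\Delta))=\delta$.

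Note also that this theorem is cited from \cite{GM}; the paper under review offers no proof here, so there is nothing in this paper to compare your argument against. The substance of part (3) lives in \cite{GM}, and a correct reconstruction would have to translate $a(c)$ and $l(c)$ into counts of $m$-shifts and $n$-shifts of the label $\ell$ landing in $\Delta$, then carefully account for multiplicities rather than asserting a bijection.
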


\begin{example}
Consider the semigroup $\Gamma$ generated by the numbers 5 and 7, and a $\Gamma$-semimodule
$\Delta=\mathbb{Z}_{\ge 0}\backslash \{1,2,3,4,6,9\}=\{0,5,7,8,10,11,\dots\}.$
The diagram $D(\Delta)$ is shown in Figure \ref{5times7}. The $5$--generators of $\Delta$ are $\{a_0,a_1,a_2,a_3,a_4\}=\{0,7,8,11,14\},$
and $$g_5(0)=\sharp([0,7)\setminus \Delta)=5,\ g_5(7)=\sharp([7,14)\setminus \Delta)=1,\ g_5(8)=\sharp([8,15)\setminus \Delta)=1,$$
and $g_5(11)=g_5(14)=0$. Therefore $$\dim C_{\Delta}=g_5(0)+g_5(7)+g_5(8)+g_5(11)+g_5(14)=7.$$

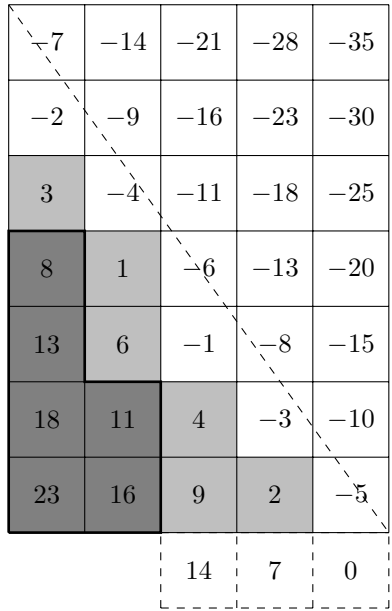
\begin{figure}[ht]
\begin{center}
\begin{tikzpicture}
\fill [color=lightgray] (0,0)--(0,5)--(1,5)--(1,4)--(2,4)--(2,2)--(3,2)--(3,1)--(4,1)--(4,0)--(0,0);
\fill [color=gray] (0,0)--(0,4)--(1,4)--(1,2)--(2,2)--(2,0)--(0,0);
\draw (0,0) grid (5,7);
\draw [dashed] (2,-1) grid (5,0);
\draw (2.5,-0.5) node {$14$};
\draw (3.5,-0.5) node {$7$};
\draw (4.5,-0.5) node {$0$};
\draw (0.5,0.5) node {$23$};
\draw (1.5,0.5) node {$16$};
\draw (2.5,0.5) node {$9$};
\draw (3.5,0.5) node {$2$};
\draw (4.5,0.5) node {$-5$};
\draw (0.5,1.5) node {$18$};
\draw (1.5,1.5) node {$11$};
\draw (2.5,1.5) node {$4$};
\draw (3.5,1.5) node {$-3$};
\draw (4.5,1.5) node {$-10$};
\draw (0.5,2.5) node {$13$};
\draw (1.5,2.5) node {$6$};
\draw (2.5,2.5) node {$-1$};
\draw (3.5,2.5) node {$-8$};
\draw (4.5,2.5) node {$-15$};
\draw (0.5,3.5) node {$8$};
\draw (1.5,3.5) node {$1$};
\draw (2.5,3.5) node {$-6$};
\draw (3.5,3.5) node {$-13$};
\draw (4.5,3.5) node {$-20$};
\draw (0.5,4.5) node {$3$};
\draw (1.5,4.5) node {$-4$};
\draw (2.5,4.5) node {$-11$};
\draw (3.5,4.5) node {$-18$};
\draw (4.5,4.5) node {$-25$};
\draw (0.5,5.5) node {$-2$};
\draw (1.5,5.5) node {$-9$};
\draw (2.5,5.5) node {$-16$};
\draw (3.5,5.5) node {$-23$};
\draw (4.5,5.5) node {$-30$};
\draw (0.5,6.5) node {$-7$};
\draw (1.5,6.5) node {$-14$};
\draw (2.5,6.5) node {$-21$};
\draw (3.5,6.5) node {$-28$};
\draw (4.5,6.5) node {$-35$};
\draw [dashed] (0,7)--(5,0);
\draw [line width=1] (0,0)--(0,4)--(1,4)--(1,2)--(2,2)--(2,0)--(0,0);
\end{tikzpicture}
\caption{The diagram $D(\Delta)$ for $n=5,\ m=7,$ and $\Delta=\mathbb{Z}_{\ge 0}\backslash \{1,2,3,4,6,9\}=\{0,5,7,8,10,11,\dots\}.$}\label{5times7}
\end{center}
\end{figure}
\end{example}

\subsection{Duality}

Let us recall a useful symmetry property for the plane curve semigroup $\Gamma$.

\begin{lemma}(e.g. \cite{kunz},\cite{delgado})
\label{gammasym}
Let $\Gamma$ be the semigroup of a unibranched plane curve singularity and let $\delta$ be its $\delta$-invariant. Then
$$a\in \Gamma\quad \Longleftrightarrow 2\delta-1-a\notin \Gamma.$$
\end{lemma}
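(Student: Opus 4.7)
The plan is to deduce the claimed symmetry of $\Gamma$ from the classical identification of the conductor of a Gorenstein curve singularity with $2\delta$, followed by a short additive-closure-and-counting argument. First I introduce the conductor ideal $\mathfrak{c} = \{f \in \tilde{R} : f\tilde{R} \subset R\}$ of $R$ in its normalization $\tilde{R} = \mathbb{C}[[t]]$. Since $\tilde{R}$ is a DVR with uniformizer $t$, we have $\mathfrak{c} = t^c \tilde{R}$ for a unique integer $c\ge 0$; in semigroup language $c$ is the smallest integer such that $c + \mathbb{Z}_{\ge 0} \subset \Gamma$, and in particular $c-1\notin \Gamma$ by minimality.

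The key input is the plane curve hypothesis: it makes $R \cong \mathbb{C}[[x,y]]/(f)$ a hypersurface ring, hence a complete intersection, hence Gorenstein. For a one-dimensional analytically irreducible Gorenstein local ring one has the length identity
$$\dim_{\mathbb{C}}(\tilde{R}/\mathfrak{c}) \;=\; 2\,\dim_{\mathbb{C}}(\tilde{R}/R),$$
which together with $\dim_{\mathbb{C}}(\tilde{R}/R) = \delta$ yields $c = 2\delta$. This is the only nontrivial ring-theoretic input and is exactly the content recorded in \cite{kunz} and \cite{delgado}; I would cite it rather than reprove it.

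Granted $c = 2\delta$, the forward implication is immediate: if both $a$ and $2\delta-1-a$ were in $\Gamma$, then their sum $2\delta - 1$ would be in $\Gamma$ by additive closure, contradicting $c-1 = 2\delta-1 \notin \Gamma$. For the converse I argue by counting. Since every integer $\ge 2\delta$ lies in $\Gamma$, the $\delta$ gaps of $\Gamma$ are all contained in $[0,2\delta-1]$, so $|\Gamma \cap [0,2\delta-1]| = 2\delta - \delta = \delta$. The involution $a \mapsto 2\delta-1-a$ of $[0,2\delta-1]$ sends $\Gamma \cap [0,2\delta-1]$ injectively into its complement by the forward direction; since both sides have cardinality $\delta$, this injection is a bijection, yielding the reverse implication.

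The main obstacle is establishing $c = 2\delta$, i.e., extracting the conductor formula from the Gorenstein property of a plane curve local ring; everything after that is three lines of additive closure plus a cardinality match. This is exactly the split the authors signal by citing \cite{kunz} and \cite{delgado} without further comment, and the argument goes through verbatim for any unibranched plane curve singularity, not just the monomial case $\{x^m = y^n\}$.
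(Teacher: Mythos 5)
The paper gives no proof of this lemma at all, citing \cite{kunz} and \cite{delgado} for exactly the ring-theoretic input you isolate (the Gorenstein property of a plane curve ring forcing the conductor exponent to equal $2\delta$), and your remaining additive-closure-plus-counting argument correctly derives the symmetry from $c=2\delta$. The only point worth flagging is that the counting step also silently uses the standard fact that $\delta=\dim(\mathbb{C}[[t]]/R)$ equals the number of gaps of $\Gamma$ for a unibranch singularity; this is part of the same classical package and does not affect correctness.
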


For every $\Gamma$--semimodule $\Delta$ we can  consider the {\em dual semimodule} 
$$
\Delta^{*}:=\{\phi\ :\ \phi+\Delta\subset \Gamma\}.
$$

\begin{lemma}
\label{mstar}
The dual semimodule $\Delta^{*}$ can be characterised by the equation
$$\Delta^{*}=(2\delta-1)-(\mathbb{Z}\setminus \Delta).$$
\end{lemma}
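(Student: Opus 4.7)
The statement is an iff at the level of sets, so I would rewrite it elementwise: $\phi \in \Delta^{*}$ should be equivalent to $2\delta-1-\phi \notin \Delta$. Taking contrapositives, it is enough to prove that $\phi \notin \Delta^{*}$ if and only if $2\delta-1-\phi \in \Delta$. The two main tools available are Lemma \ref{gammasym}, which says $a \in \Gamma \Leftrightarrow 2\delta-1-a \notin \Gamma$, and the defining property $\Delta + \Gamma \subset \Delta$ of a $\Gamma$-semimodule.

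For the direction $2\delta-1-\phi \in \Delta \Rightarrow \phi \notin \Delta^{*}$, I would simply witness the failure. Put $a = 2\delta-1-\phi$; since $a \in \Delta$, we have $\phi + a = 2\delta-1$, which lies outside $\Gamma$ by Lemma \ref{gammasym} applied to $0 \in \Gamma$. Hence $\phi + \Delta \not\subset \Gamma$.

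For the converse, suppose $\phi \notin \Delta^{*}$, and pick some $\delta' \in \Delta$ with $\phi + \delta' \notin \Gamma$. Lemma \ref{gammasym} then gives $2\delta-1-\phi-\delta' \in \Gamma$. Writing
\[
2\delta-1-\phi = \delta' + (2\delta-1-\phi-\delta'),
\]
we recognize the right-hand side as an element of $\Delta + \Gamma \subset \Delta$, so $2\delta-1-\phi \in \Delta$ as desired.

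I do not anticipate any real obstacle: the entire content is packaged in Lemma \ref{gammasym} together with the semimodule absorption property, and each direction is a one-line witness construction. The only mildly subtle point is keeping track of the fact that the formula implicitly allows $\phi$ to range over $\mathbb{Z}$ (not just $\mathbb{Z}_{\ge 0}$), which is consistent with $\Delta^{*}$ being defined as a subset of $\mathbb{Z}$ and matches the shift $(2\delta-1) - (\mathbb{Z}\setminus\Delta)$ on the right.
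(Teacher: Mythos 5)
Your proof is correct and follows essentially the same route as the paper: both directions reduce to Lemma \ref{gammasym} applied at the point where $\phi+\Delta$ meets or misses $\Gamma$, combined with the absorption property $\Delta+\Gamma\subset\Delta$. The paper phrases it as a chain of equivalences $\phi+\Delta\subset\Gamma\Leftrightarrow(2\delta-1)\notin\phi+\Delta\Leftrightarrow(2\delta-1-\phi)\notin\Delta$, whereas you argue the contrapositive, but the underlying content — in particular your witness construction $2\delta-1-\phi=\delta'+(2\delta-1-\phi-\delta')$ — is precisely the step "$(2\delta-1)\in x+\Gamma\subset\phi+\Delta$" in the paper's proof.
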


\begin{proof}
Note that $(2\delta-1)-(\mathbb{Z}\setminus \Delta)$ is a $\Gamma$--semimodule.
Indeed, if $x\in \mathbb{Z}\setminus\Delta$ then $x-m,x-n\in \mathbb{Z}\setminus\Delta$.

It follows from Lemma \ref{gammasym} that
$$\phi+\Delta\subset \Gamma\Leftrightarrow(2\delta-1)\notin \phi+\Delta.$$
Indeed, if $(2\delta-1)\in \phi+\Delta$ then $\phi+\Delta$ is not a subset of $\Gamma$.
Conversely, if $\exists x\in (\phi+\Delta)\setminus \Gamma$, then
by Lemma \ref{gammasym} $(2\delta-1-x)\in \Gamma$, so $(2\delta-1)\in x+\Gamma\subset \phi+\Delta$.
Therefore 
$$
\phi+\Delta\subset \Gamma\Leftrightarrow(2\delta-1)\notin \phi+\Delta \Leftrightarrow
$$
$$
\Leftrightarrow(2\delta-1-\phi)\notin \Delta  \Leftrightarrow \phi\in (2\delta-1)-(\mathbb{Z}\setminus \Delta).
$$
\end{proof}

\begin{remark}
Lemmas \ref{mstar} and \ref{gammasym} are combinatorial analogues of the Gorenstein property for the plane curve singularities (e.g. \cite{kunz},\cite{delgado}).
\end{remark}

We normalize $\Delta^{*}$ so that it starts from $0$ and denote the normalized semimodule by $\widehat{\Delta}$.
Following Lemma \ref{mstar},
we have
\begin{equation}
\label{DeltaHat}
\widehat{\Delta}=\max (\mathbb{Z}\setminus \Delta)-(\mathbb{Z}\setminus \Delta).
\end{equation}

\begin{lemma}
\label{hathat}
One has $\widehat{(\widehat{\Delta})}=\Delta$.
\end{lemma}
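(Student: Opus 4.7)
The plan is to work directly from the explicit description in equation~\eqref{DeltaHat}, namely
$$
\widehat{\Delta} = M(\Delta) - (\mathbb{Z}\setminus\Delta), \qquad M(\Delta) := \max(\mathbb{Z}\setminus\Delta),
$$
and reduce the claim to a short symbolic manipulation. First I would verify that the normalizing constant behaves well under duality: applying the formula once and taking complements gives
$$
\mathbb{Z}\setminus\widehat{\Delta} = M(\Delta) - \Delta,
$$
since $x\in\widehat{\Delta}$ iff $M(\Delta)-x\notin\Delta$. This is the key identity; everything else is bookkeeping.

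Next I would compute the new normalizing constant. Using the identity just derived,
$$
M(\widehat{\Delta}) = \max(M(\Delta) - \Delta) = M(\Delta) - \min(\Delta) = M(\Delta),
$$
where the last equality uses the standing assumption that $\Delta$ is zero-normalized. Plugging back into \eqref{DeltaHat} applied to $\widehat{\Delta}$:
$$
\widehat{\widehat{\Delta}} = M(\widehat{\Delta}) - (\mathbb{Z}\setminus\widehat{\Delta}) = M(\Delta) - \bigl(M(\Delta) - \Delta\bigr) = \Delta,
$$
which gives the claim.

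The only step requiring genuine input from the theory is the first identity $\mathbb{Z}\setminus\widehat{\Delta} = M(\Delta) - \Delta$, which in turn relies on Lemma~\ref{mstar} (and hence on the Gorenstein-type symmetry of Lemma~\ref{gammasym}) to justify \eqref{DeltaHat}. I do not expect a real obstacle here, but one small point deserves care: the formula \eqref{DeltaHat} as written involves $\mathbb{Z}\setminus\Delta$, a doubly-infinite set, so one should check that all the $\max$'s and $\min$'s appearing are attained. This is automatic because $\Delta$ is zero-normalized and contains $\Gamma$-shifts of $0$, hence contains every sufficiently large integer, so $M(\Delta)$ is a finite positive integer; symmetrically $\widehat{\Delta}$ is zero-normalized by construction, so the same argument applies at the second step.
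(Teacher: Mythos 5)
Your proof is correct and follows the same route as the paper: both derive $\mathbb{Z}\setminus\widehat{\Delta}=\max(\mathbb{Z}\setminus\Delta)-\Delta$ from equation~\eqref{DeltaHat}, conclude $\max(\mathbb{Z}\setminus\widehat{\Delta})=\max(\mathbb{Z}\setminus\Delta)$ using zero-normalization, and then substitute back. You merely spell out the final substitution and the finiteness of the maxima a bit more explicitly than the paper does.
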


\begin{proof}
It follows from (\ref{DeltaHat}) that 
$$\mathbb{Z}\setminus\widehat{\Delta}=\max (\mathbb{Z}\setminus \Delta)-\Delta,$$
and therefore
$$\max(\mathbb{Z}\setminus\widehat{\Delta})=\max (\mathbb{Z}\setminus \Delta).$$
\end{proof}

\begin{example}
We list the combinatorial types of $\Gamma$--semimodules $\Delta$, $\Delta^{*}$ and $\widehat{\Delta}$ for $(m,n)=(3,4)$ in Table \ref{tab1}.

\begin{table}
\label{tab1}
\centering
\begin{tabular}[c]{|c|c|c|}
\hline
$\Delta$ & $\Delta^{*}$ & $\widehat{\Delta}$\\
\hline
$0,3,4,6,\ldots$ & $0,3,4,6,\ldots$ & $0,3,4,6,\ldots$ \\
$0,3,4,5,6,\ldots$ & $3,4,6,\ldots$ & $0,1,3,4,5,6,\ldots$\\
$0,2,3,4,5,6,\ldots$ & $4,6,\ldots$ & $0,2,3,4,5,6,\ldots$\\
$0,1,3,4,5,6,\ldots$ & $3,6,\ldots$ & $0,3,4,5,6,\ldots$\\
$0,1,2,3,4,5,6,\ldots$ & $6,\ldots$ & $0,1,2,3,4,6,\ldots$\\
\hline
\end{tabular}
\caption{Semimodules $\Delta^{*}$ and $\widehat{\Delta}$ for $(m,n)=(3,4)$}
\end{table}

\end{example}







 
\subsection{Maps $G_{m}$ and $G_{n}$}
\label{section G_m G_n}

Let us return to the formula (\ref{dimp}). Given a $\Gamma$-semimodule $\Delta$, we can consider its $m$-generators $a_1,\ldots,a_m$
and compute 
$$g_{m}(a_i)=\sharp \left([a_i,a_i+n)\setminus \Delta\right).$$

\begin{theorem}(\cite{GM})
The numbers $g_{m}(a_i)$ are decreasing. 
The Young diagram with columns $g_{m}(a_i)$ can be embedded in an $m\times n$ rectangle below the diagonal.
\end{theorem}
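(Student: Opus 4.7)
The plan is to order the $m$-generators as $a_1<a_2<\cdots<a_m$---canonical, since each of the $m$ residue classes modulo $m$ contains a unique $m$-generator of $\Delta$---and to treat the decreasing property and the diagonal-embedding property separately.

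For the decreasing property, the key fact is $n\in\Gamma$, so $\Delta+n\subset\Delta$. This immediately gives, for every integer $x$,
$$g_m(x)-g_m(x+1)=\mathbf{1}_{x\notin\Delta}-\mathbf{1}_{x+n\notin\Delta}\;\geq\;0,$$
since $x\in\Delta$ forces $x+n\in\Delta$. Hence $g_m$ is weakly decreasing on all of $\mathbb{Z}$, and in particular $g_m(a_1)\geq g_m(a_2)\geq\cdots\geq g_m(a_m)$. Moreover $g_m(a_i)\leq n$ trivially, since $[a_i,a_i+n)$ has only $n$ integers, so the shape already fits inside an $m\times n$ rectangle.

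The embedding below the diagonal is the substantive part. I would translate the diagonal condition into a column-by-column inequality of the form $g_m(a_i)\leq\lfloor n(m-i)/m\rfloor$ and verify it by bounding $|\Delta\cap[a_i,a_i+n)|$ from below. Each element of $\Delta\cap[a_i,a_i+n)$ is uniquely $a_j+\ell m$ with $\ell\geq 0$, and I would partition the count by $j$: (a) $a_j<a_i$ contributes one element from class $a_j$ exactly when the residue offset $(a_i-a_j)\bmod m$ is small enough to keep $a_j+\lceil(a_i-a_j)/m\rceil m$ inside $[a_i,a_i+n)$; (b) $j=i$ or $a_i<a_j<a_i+n$ contributes the single element $a_j$; (c) $a_j\geq a_i+n$ contributes nothing. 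Summing these contributions over $j$ and using that $\{a_1,\ldots,a_m\}$ is a complete residue system mod $m$ should yield the desired bound.

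The main obstacle is this last counting step. The bound is tight already at $i=1$, so the argument cannot afford slack; it must exploit the precise fact that $\{a_1,\ldots,a_i\}$ are the $i$ smallest $m$-generators, rather than an arbitrary $i$-element subset of $\Delta$, in order to pin down the distribution of residue offsets $(a_i-a_j)\bmod m$ for $j<i$.
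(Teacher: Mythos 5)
Your argument for monotonicity is correct and clean: $g_m(x)-g_m(x+1)=\mathbf{1}_{x\notin\Delta}-\mathbf{1}_{x+n\notin\Delta}\ge 0$ because $\Delta+n\subset\Delta$, so $g_m$ is weakly decreasing on all of $\mathbb{Z}$ and in particular along the increasing sequence of $m$-generators. That part stands.

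The below-the-diagonal part, however, is not a proof; it is a plan with the decisive step missing, and you say so yourself. Reformulating the diagonal condition as $|\Delta\cap[a_i,a_i+n)|\ge\lceil ni/m\rceil$ and observing that each element of $\Delta$ is uniquely $a_j+\ell m$ with $\ell\ge 0$ is a reasonable start, but the case split (a)--(c) is only a bookkeeping skeleton: it never produces a lower bound, and it cannot until one invokes the structural constraints that the $a_j$ satisfy as $m$-generators of a $\Gamma$-semimodule (closure under $+n$, not merely the decomposition into residue classes mod $m$). Two further concrete problems with the sketch. First, cases (a) and (b) tacitly assume each residue class contributes at most one element to a window of length $n$; that is true only for $n\le m$, yet the statement must also hold with $n>m$ (and its companion for $G_n$ uses windows of length $m$), in which case a single class can contribute up to $\lceil n/m\rceil$ elements. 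Second, knowing only that $\{a_1,\dots,a_i\}$ are the $i$ smallest $m$-generators does not pin down the offsets $(a_i-a_j)\bmod m$; you need a mechanism that converts semimodule structure into counting inequalities, along the lines of Lemma~\ref{numgens}, which expresses the number of $m$-generators in a length-$n$ window as a difference $g_m(x-m)-g_m(x)$. Until such a mechanism is brought to bear and the inequality actually derived, the embedding claim remains unsubstantiated.
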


This result allows us to consider the map $G_{m}$ from the set of diagrams below the diagonal to itself,
sending $D(\Delta)$ to the diagram  with columns $g_{m}(a_i)$. We will also  use the notation $G_{m}(\Delta)=G_{m}(D(\Delta))$ for a $\Gamma$--semimodule $\Delta$. In a similar way, one can define the map $G_{n}$ by choosing $n$-generators instead of $m$-generators. Note that the diagram $G_n(\Delta)$ has $n$ columns, and is embedded in an $n\times m$ rectangle below the diagonal. 
\begin{theorem}
\label{commdiag}
One has
$$G_{n}(\Delta)=\left(G_{m}(\widehat{\Delta})\right)^{T}.$$
\end{theorem}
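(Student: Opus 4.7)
The strategy is to recast both $G_n(\Delta)$ and $G_m(\widehat{\Delta})$ as the row and column marginals of a single $\{0,1\}$-matrix of Young-diagram shape; the identity then follows from the tautology that the row- and column-sum partitions of a Young diagram are conjugate. Call $c\in\mathbb{Z}$ an \emph{$\ell$-cogenerator} of a $\Gamma$-semimodule $M$ if $c\notin M$ and $c+\ell\in M$, i.e.\ if $c$ is the largest gap of $M$ in its residue class mod $\ell$; there are exactly $\ell$ of them, one per residue class. The key reformulation is that, for every $x\in\mathbb{Z}$,
$$
\sharp\bigl([x,x+\ell)\setminus M\bigr)=\sharp\{c:c\text{ is an $\ell$-cogenerator of }M \text{ with } c\ge x\},
$$
because $[x,x+\ell)$ contains exactly one representative $y_r$ of each residue class $r\pmod{\ell}$, and $y_r$ is a gap of $M$ iff $y_r\le c_r$; since $y_r\ge x$, this is equivalent to $c_r\ge x$. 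Specializing $(M,\ell)=(\Delta,m)$ gives $g_n(a)=\sharp\{c:c\text{ is an $m$-cogenerator of }\Delta,\ c\ge a\}$, and $(M,\ell)=(\widehat{\Delta},n)$ gives $g_m(b)=\sharp\{c':c'\text{ is an $n$-cogenerator of }\widehat{\Delta},\ c'\ge b\}$.

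Set $M_0:=\max(\mathbb{Z}\setminus\Delta)$. Using the description $\widehat{\Delta}=M_0-(\mathbb{Z}\setminus\Delta)$ from Lemma~\ref{mstar}, a direct check shows that $x\mapsto M_0-x$ is a bijection between the $m$-generators of $\widehat{\Delta}$ and the $m$-cogenerators of $\Delta$, and likewise between the $n$-cogenerators of $\widehat{\Delta}$ and the $n$-generators of $\Delta$. Substituting $c'=M_0-a$ in the second identity above and writing $c:=M_0-b$, we obtain
$$
g_m(b)=\sharp\{a:a\text{ is an $n$-generator of }\Delta \text{ with } a\le c\},
$$
where $c$ ranges over the $m$-cogenerators of $\Delta$ as $b$ ranges over the $m$-generators of $\widehat{\Delta}$.

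Now list the $n$-generators of $\Delta$ as $a_1<\cdots<a_n$ and the $m$-cogenerators of $\Delta$ as $c_1>\cdots>c_m$, and form the $n\times m$ matrix $R$ with $R_{ij}=1$ iff $a_i\le c_j$. By monotonicity of the two orderings, $R$ is both left- and top-justified, hence is the indicator of a Young diagram. Its $i$-th row sum is $g_n(a_i)$ and equals the length of the $i$-th row of $G_n(\Delta)^T$; its $j$-th column sum is $g_m(M_0-c_j)$ and equals the height of the $j$-th column of $G_m(\widehat{\Delta})$, since the chosen orderings are precisely those making $g_n$ and $g_m$ decrease along the columns in the definitions of $G_n$ and $G_m$. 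Therefore $R=G_n(\Delta)^T=G_m(\widehat{\Delta})$ as Young diagrams, whence $G_n(\Delta)=\bigl(G_m(\widehat{\Delta})\bigr)^T$. The only substantive step is the reformulation in the first paragraph; once both diagrams are encoded by the single relation ``$a\le c$'' on $n$-generators vs.\ $m$-cogenerators of $\Delta$, the transpose identity is automatic.
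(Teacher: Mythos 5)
Your proof is correct and takes essentially the same route as the paper's: the paper also shows that $g_n(a_i)=\sharp\{j:a_i+b_j<\max(\mathbb Z\setminus\Delta)\}$ and $g_m(b_j)=\sharp\{i:a_i+b_j<\max(\mathbb Z\setminus\Delta)\}$ (which, since $a_i\in\Delta$ while $\max(\mathbb Z\setminus\Delta)-b_j\notin\Delta$, is the same relation as your ``$a_i\le c_j$''), and then notes the two diagrams are transposes of one another. Your write-up merely fills in the ``one can check'' step with the residue-class argument and makes the final transpose step explicit via the $\{0,1\}$-matrix $R$.
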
 

\begin{proof}
Let us recall the notion of an {\em $m$-cogenerator} for the $\Gamma$--semimodule $\Delta$ introduced in \cite{GM}: we call $b$ an $m$-cogenerator for $\Delta$ if $b\notin \Delta$, but $b+m\in \Delta$. Remark that by Lemma \ref{mstar} the $m$-cogenerators of $\Delta$ are in 1-to-1 correspondence with the $m$-generators of $\widehat{\Delta}$. More precisely, if $b$ is an $m$-generator of $\widehat{\Delta}$, then $(\max(\mathbb{Z}\setminus\Delta)-b)$ is an $m$-cogenerator for $\Delta$.

Let $a_1,\ldots,a_n$ be the $n$-generators of $\Delta$ and let $b_1,\ldots, b_m$ be the $m$ --
generators of $\widehat{\Delta}$. One can check that $g_{n}(a_i)$ equals  the number of $m$-cogenerators of $\Delta$ greater than $a_i$:
$$
g_{n}(a_i)=\sharp \left([a_i,a_i+m)\setminus \Delta\right)=\sharp \{j| \max(\mathbb{Z}\setminus\Delta)-b_j>a_i\}=\sharp \{j| a_i+b_j<\max(\mathbb{Z}\setminus\Delta)\}.
$$
Analogously by Lemma \ref{hathat}
$$g_{m}(b_j)=\sharp \{i| a_i+b_j<\max(\mathbb{Z}\setminus\Delta)\}.$$
It is clear that the corresponding Young diagrams are transposed to each other.
\end{proof}

\begin{corollary}
The map $G_{n}$ is bijective if and only if the map $G_{m}$ is bijective.
\end{corollary}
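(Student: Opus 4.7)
The plan is to deduce the corollary directly from the factorization supplied by Theorem \ref{commdiag}, namely the identity $G_n(\Delta)=\left(G_m(\widehat{\Delta})\right)^T$. Reading this as an equation of maps, it expresses $G_n$ as the composition of three operations: the duality $\Delta\mapsto\widehat{\Delta}$ on zero-normalized $\Gamma$-semimodules, the map $G_m$, and the transposition of Young diagrams. If the two outer operations are bijections, then $G_n$ and $G_m$ become related by conjugation by bijections, and the corollary follows immediately.

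The first step is therefore to check that each outer operation is indeed a bijection. The duality $\Delta\mapsto\widehat{\Delta}$ is an involution on the set of zero-normalized $\Gamma$-semimodules by Lemma \ref{hathat}. Transposition of diagrams is an obvious bijection between Young diagrams fitting below the diagonal of an $m\times n$ rectangle and those fitting below the diagonal of an $n\times m$ rectangle: transposing a lattice path that stays below the diagonal of a rectangle of sides $m,n$ produces a path that stays below the diagonal of the rectangle of sides $n,m$. I would include this one-line verification for completeness, but it is essentially immediate from the definitions.

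With these two ingredients, one direction is automatic: if $G_m$ is bijective, then $G_n=T\circ G_m\circ\widehat{(\cdot)}$ is a composition of bijections and hence bijective. For the converse, I would apply Theorem \ref{commdiag} to $\widehat{\Delta}$ in place of $\Delta$ and invoke Lemma \ref{hathat} to obtain the dual identity $G_m(\Delta)=\left(G_n(\widehat{\Delta})\right)^T$, which presents $G_m$ as a composition of bijections whenever $G_n$ is bijective. There is no real obstacle in this proof; once Theorem \ref{commdiag} and Lemma \ref{hathat} are in hand, the corollary is just a bookkeeping exercise in composing bijections.
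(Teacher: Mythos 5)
Your proof is correct and is exactly the argument the paper intends: the corollary is stated without proof as an immediate consequence of Theorem \ref{commdiag}, using that $\widehat{\cdot}$ is an involution (Lemma \ref{hathat}) and transposition is a bijection, so $G_n$ and $G_m$ differ by pre- and post-composition with bijections.
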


\begin{corollary}
The dimensions of cells in $\jcx$ labelled by $\Delta$ and $\widehat{\Delta}$ are the same:
$$\dim C_{\Delta}=|G_{n}(\Delta)|=|G_{m}(\widehat{\Delta})|=\dim C_{\widehat{\Delta}}.$$
\end{corollary}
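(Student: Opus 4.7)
The proof will be a short chain of equalities, each sourced from material already developed; the plan is as follows.

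I would start with the first equality. Piontkowski's formula (\ref{dimp}) reads $\dim C_\Delta = \sum_{j=1}^n g_{n,\Delta}(a_j)$, summed over the $n$-generators $a_j$ of $\Delta$, and by the very definition of $G_n$, this sum is the total number of boxes of the Young diagram $G_n(\Delta)$ whose columns have heights $g_{n,\Delta}(a_j)$. Hence $\dim C_\Delta = |G_n(\Delta)|$. The second equality $|G_n(\Delta)| = |G_m(\widehat{\Delta})|$ is then immediate from Theorem \ref{commdiag}, since transposition preserves the number of boxes of a Young diagram.

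The only genuine input is the final equality $|G_m(\widehat{\Delta})| = \dim C_{\widehat{\Delta}}$. Here I would appeal to the symmetry of the setup: the plane curve singularity $\{x^m=y^n\}$ coincides with $\{y^n=x^m\}$, and the semigroup $\Gamma_{m,n}$ is symmetric in $m$ and $n$. Applying Piontkowski's theorem in this swapped orientation yields an alternative formula $\dim C_{\Delta'} = \sum_i g_m(b_i) = |G_m(\Delta')|$ for every $\Gamma$-semimodule $\Delta'$, where the $b_i$ are the $m$-generators. Specializing to $\Delta' = \widehat{\Delta}$ closes the chain.

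The main (and really only) obstacle is the clean recognition of this symmetric variant of Piontkowski's formula. If one prefers to avoid directly invoking $(m,n)$-symmetry, an equivalent route is to combine Theorem \ref{commdiag} applied to $\widehat{\Delta}$ with Lemma \ref{hathat} to compute $\dim C_{\widehat{\Delta}} = |G_n(\widehat{\Delta})| = |G_m(\widehat{\widehat{\Delta}})| = |G_m(\Delta)|$, and then separately to establish the identity $|G_m(D)| = |G_n(D)|$ for every $D$, which is part (1) of the second theorem of the introduction; either route makes the corollary a one-line chain of substitutions.
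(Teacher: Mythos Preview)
Your proposal is correct and matches the paper's intended reasoning: the corollary is stated there without proof, and your chain of equalities---Piontkowski's formula (\ref{dimp}) for the outer equalities (invoking the $(m,n)$-symmetry of the singularity for the $|G_m(\widehat{\Delta})|=\dim C_{\widehat{\Delta}}$ step) together with Theorem~\ref{commdiag} for the middle one---is exactly what the authors have in mind. Your alternative Route~B via Lemma~\ref{hathat} also works but is unnecessarily indirect.
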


\begin{theorem}
\label{nnp1}
If $m=n+1$ then $G_{n+1}=G_{n}$.
\end{theorem}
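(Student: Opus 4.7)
The plan is to compare the two diagrams columnwise after sorting the generators. Let $a_1<\cdots<a_n$ denote the $n$-generators and $b_1<\cdots<b_{n+1}$ the $m$-generators of $\Delta$, and set $F(x):=\sharp\bigl([x,x+n)\setminus\Delta\bigr)$. By definition $g_m(b_j)=F(b_j)$, and since $a_i\in\Delta$ implies $a_i+n\in\Delta$, we also get $g_n(a_i)=F(a_i)+[a_i+n\notin\Delta]=F(a_i)$. The theorem thus reduces to showing $F(a_i)=F(b_i)$ for $i=1,\dots,n$ together with $F(b_{n+1})=0$.

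The main input is a counting identity. Let $A_k$ and $B_k$ denote the numbers of $n$- and $m$-generators of $\Delta$ that are at most $k$. Since the elements of $\Delta\cap[0,k]$ that fail to be $n$-generators are precisely those in $n+\bigl(\Delta\cap[0,k-n]\bigr)$, one has $A_k=|\Delta\cap[0,k]|-|\Delta\cap[0,k-n]|$ and similarly for $B_k$, giving
\begin{equation*}
B_k-A_k=|\Delta\cap(k-m,k-n]|=[k-n\in\Delta],
\end{equation*}
where the last equality crucially uses $m-n=1$. Specializing to $k=a_i$ and using $a_i-n\notin\Delta$, one obtains $B_{a_i}=A_{a_i}=i$, whence the interleaving $b_i\le a_i<b_{i+1}$. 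Moreover, if some $y\in[b_i,a_i)$ were a gap of $\Delta$, the interleaving would yield $B_y\ge i>A_y$, forcing $y-n\in\Delta$ by the identity; but then $y=(y-n)+n\in\Delta$ by closure, a contradiction. Hence $[b_i,a_i)\subseteq\Delta$.

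To conclude, observe the telescoping $F(x+1)-F(x)=[x+n\notin\Delta]-[x\notin\Delta]$, which vanishes whenever $x\in\Delta$ (since then $x+n\in\Delta$ too). Applied to the gap-free interval $[b_i,a_i]$, this gives $F(a_i)=F(b_i)$ and so $g_n(a_i)=g_m(b_i)$. For the extra column, $b_{n+1}$ is the maximum among the residue-class minima of $\Delta$ modulo $m$, so for every $j\in\{0,\dots,n-1\}$ the element $b_{n+1}+j$ lies in the same class as (and is at least) the minimum in that class, forcing $b_{n+1}+j\in\Delta$ and hence $F(b_{n+1})=0$. The main obstacle in this plan is isolating the correct counting identity in the second paragraph and recognizing that $m=n+1$ enters there essentially; once it is in place, the interleaving, the gap-freeness of $[b_i,a_i)$, and the desired equality $F(a_i)=F(b_i)$ all drop out mechanically.
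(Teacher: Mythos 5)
Your proof is correct, and the overall scheme---pair each $n$-generator $a_i$ with the corresponding $(n+1)$-generator $b_i$, show the interval between them is gap-free, then compare column heights---is the same as the paper's. The difference lies in how the pairing is established: the paper defines $a_{\pm}(x)$ directly and chases closure (if $x\in\Delta$ is not an $n$-generator then $x+1=(x-n)+(n+1)\in\Delta$, etc.) to conclude $[a_-,a_+]\subseteq\Delta$, whereas you derive the interleaving $b_i\le a_i<b_{i+1}$ and the gap-freeness of $[b_i,a_i)$ from the counting identity $B_k-A_k=[k-n\in\Delta]$, which isolates the role of $m-n=1$ in a clean, quantitative way (the interval $(k-m,k-n]$ collapsing to a single point). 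The final comparison $g_{n+1}(b_i)=g_n(a_i)$ is the same content in both arguments, but you package it as the telescoping $F(x+1)-F(x)=[x+n\notin\Delta]-[x\notin\Delta]$ vanishing on $\Delta$, while the paper performs a direct shift $[a_-+n,a_++n+1]\subseteq\Delta$; and your explicit check that $F(b_{n+1})=0$ spells out what the paper describes as the pairing being a bijection ``except for the largest $(n+1)$-generator.'' Both versions are sound; the counting-identity route is a bit more algebraic and reusable, while the paper's is more self-contained and elementary.
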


\begin{proof}
Let $\Delta$ be a $\Gamma$--semimodule and $x\in \Delta$. Let 
$a_{+}(x)$ be the minimal $n$-generator of $\Delta$ greater than or equal to $x$. We set $a_{+}(x)=\infty$ if there are no $n$-generators greater than $x.$ Similarly, let $a_{-}(x)$ be the maximal $(n+1)$-generator of $\Delta$ less than or equal to $x$. Note that since $0$ is an $n$-generator and $x\in\Delta\subset \mathbb Z_{\ge 0},\ a_{-}(x)$ is always well defined.

If $x$ is not an $n$-generator then $x-n\in \Delta$, so $(x-n)+(n+1)=x+1\in \Delta$.
If $x+1$ is not an $n$-generator then $x+2\in \Delta$ etc. By continuing this process
we conclude that $[x,a_{+}(x)]\subset \Delta$. Since $a_{+}(x)-n\notin \Delta$, either $a_{+}(x)+1\notin \Delta$ or
$a_{+}(x)+1$ is a $(n+1)$-generator of $\Delta$.

Analogously, $[a_{-}(x),x]\subset \Delta$ and either $a_{-}(x)-1\notin \Delta$ or
$a_{-}(x)-1$ is an $n$-generator of $\Delta$. Therefore $n$- and $n+1$-generators of $\Delta$ are split into pairs $(a_{-},a_{+})$ such that $[a_{-},a_{+}]\subset \Delta$,
and this is a 1-to-1 correspondence except for the largest $(n+1)$-generator. Since $[a_{-},a_{+}]\subset \Delta$, we have $[a_{-}+n,a_{+}+n+1]\subset \Delta$ and 
$$g_{n+1}(a_{-})=\sharp([a_{-},a_{-}+n)\setminus\Delta)=\sharp([a_{+},a_{+}+n+1)\setminus\Delta)=g_{n}(a_{+}).$$
\end{proof}

\begin{corollary}
If $m=n+1$, then 
$$G_{n}(\widehat{\Delta})=(G_n(\Delta))^{T}.$$
\end{corollary}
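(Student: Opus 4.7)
The plan is to combine the two immediately preceding results: Theorem \ref{commdiag} gives the general identity $G_n(\Delta) = (G_m(\widehat{\Delta}))^T$ for any coprime pair $(m,n)$, while Theorem \ref{nnp1} asserts that in the special case $m = n+1$ the two maps $G_n$ and $G_{n+1}$ coincide on every $\Gamma$-semimodule. So the corollary should be a one-step substitution, with no new combinatorial content to produce.

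Concretely, I would first invoke Theorem \ref{commdiag} with $m = n+1$ to write
\[
G_n(\Delta) = \bigl(G_{n+1}(\widehat{\Delta})\bigr)^T.
\]
Next, I would apply Theorem \ref{nnp1} to the semimodule $\widehat{\Delta}$ to replace $G_{n+1}(\widehat{\Delta})$ with $G_n(\widehat{\Delta})$, yielding
\[
G_n(\Delta) = \bigl(G_n(\widehat{\Delta})\bigr)^T.
\]
Transposing both sides and using that transposition is an involution on Young diagrams gives the claim
\[
G_n(\widehat{\Delta}) = \bigl(G_n(\Delta)\bigr)^T.
\]

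There is essentially no obstacle here, since both of the inputs are already proved in the excerpt. The only small point worth checking is that Theorem \ref{nnp1} is being applied to $\widehat{\Delta}$ rather than to $\Delta$, which is legitimate because $\widehat{\Delta}$ (after the normalization in \eqref{DeltaHat}) is again a zero-normalized $\Gamma$-semimodule, so the hypotheses of Theorem \ref{nnp1} hold verbatim. Thus the corollary is really just a restatement of Theorem \ref{commdiag} once the equality $G_{n+1} = G_n$ is used to eliminate $G_m$ from its right-hand side.
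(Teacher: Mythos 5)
Your proof is correct and follows essentially the same approach as the paper: both rely solely on Theorems \ref{commdiag} and \ref{nnp1}. The only cosmetic difference is that the paper's one-line computation applies Theorem \ref{commdiag} to $\widehat{\Delta}$ (implicitly using Lemma \ref{hathat} that $\widehat{\widehat{\Delta}}=\Delta$) and Theorem \ref{nnp1} to $\Delta$, whereas you apply \ref{commdiag} to $\Delta$ and \ref{nnp1} to $\widehat{\Delta}$, then transpose; your variant sidesteps the need for Lemma \ref{hathat} entirely.
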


\begin{proof}
By Theorems \ref{commdiag} and \ref{nnp1}
$$G_{n}(\widehat{\Delta})=(G_{n+1}(\Delta))^{T}=(G_n(\Delta))^{T}.$$
\end{proof}

\begin{example}
Let us present an example where $G_{n}$ and $G_{m}$ are essentially different.
Let $(m,n)=(3,7)$ and $\Delta=\{0,1,3,4,6,7,8,9,10,11,12,\ldots\}$.
Then the $3$-generators are equal to $0,1,8$, and
$$g_{3}(0)=2,g_{3}(1)=2,g_{3}(8)=0.$$
The $7$-generators are $0,1,3,4,6,9,12$, and
$$g_{7}(0)=g_{7}(1)=g_{7}(3)=g_{7}(4)=1,g_{7}(6)=g_{7}(9)=g_{7}(12)=0.$$

The dual semimodule is 
$\widehat{\Delta}=\{0,3,6,7,8,\ldots\}.$
Its $3$-generators are $0,7,8$, and
$$g_{3}(0)=4,g_{3}(7)=g_{3}(8)=0.$$
The $7$-generators are $0,3,6,8,9,11,12$, and
$$g_{7}(0)=g_{7}(3)=2, g_{7}(6)=g_{7}(8)=g_{7}(9)=g_{7}(11)=g_{7}(12)=0.$$

We illustrate this in Figure \ref{GmGn}.

\begin{figure}
\centering
\begin{tikzpicture}
\draw (-0.5,3) node {$\Delta$};
\draw (-0.5,1) node {$\widehat{\Delta}$};

\draw (0,2.5)--(0,3.5)--(1,3.5)--(1,2.5)--(0,2.5);
\draw (0,3)--(1,3);
\draw (0.5,2.5)--(0.5,3.5);
\draw (0,0)--(0,0.5)--(1,0.5)--(1,0)--(0,0);
\draw (0.5,0)--(0.5,0.5);

\draw (0,0.5)--(0,1.5)--(3.5,1.5)--(3.5,0)--(1,0);
\draw [dashed] (0,1.5)--(3.5,0);

\draw (0,3.5)--(0,4)--(3.5,4)--(3.5,2.5)--(1,2.5);
\draw [dashed] (0,4)--(3.5,2.5);

\draw (0.2,0.2) node {11};
\draw (0.2,0.7) node {4};
\draw (0.7,0.2) node {8};
\draw (0.7,0.7) node {1};
\draw (1.2,0.2) node {5};
\draw (1.7,0.2) node {2};

\draw (0.2,2.7) node {11};
\draw (0.2,3.2) node {4};
\draw (0.7,2.7) node {8};
\draw (0.7,3.2) node {1};
\draw (1.2,2.7) node {5};
\draw (1.7,2.7) node {2};

\draw (2,4.5) node {$D(\cdot)$};

\draw (4.5,0)--(4.5,2)--(5,2)--(5,0)--(4.5,0);
\draw (4.5,0.5)--(5,0.5);
\draw (4.5,1)--(5,1);
\draw (4.5,1.5)--(5,1.5);

\draw (4.5,2.5)--(4.5,3.5)--(5.5,3.5)--(5.5,2.5)--(4.5,2.5);
\draw (5,2.5)--(5,3.5);
\draw (4.5,3)--(5.5,3);

\draw (5,4.5) node {$G_{3}(\cdot)$};

\draw (7,0)--(7,1)--(8,1)--(8,0)--(7,0);
\draw (7,0.5)--(8,0.5);
\draw (7.5,0)--(7.5,1);

\draw (6.5,2.5)--(6.5,3)--(8.5,3)--(8.5,2.5)--(6.5,2.5);
\draw (7,2.5)--(7,3);
\draw (7.5,2.5)--(7.5,3);
\draw (8,2.5)--(8,3);

\draw (7.5,4.5) node {$G_{7}(\cdot)$};

\end{tikzpicture}
\caption{Maps $G_m$ and $G_n$}
\label{GmGn}
\end{figure}
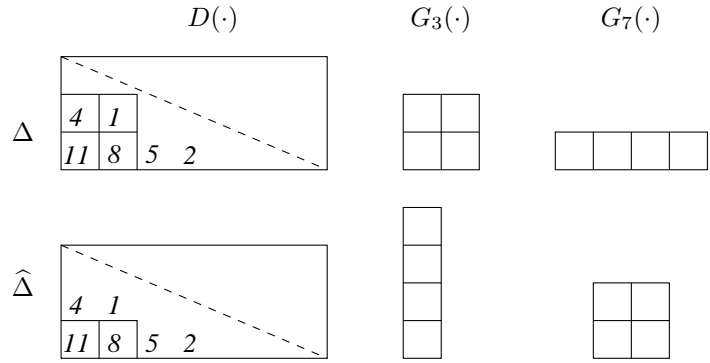
\end{example}

\subsection{$(m,n)$--cores}

\begin{definition}
A Young diagram is called a $p$--core if it does not have boxes with hook length equal to $p$.
\end{definition}

The $p$-core partitions play an important role in the study of representations of symmetric groups over finite fields (see \cite{anderson},\cite{FMS},\cite{puchta} and references therein). J. Anderson observed that the number of partitions that are simultaneously $m$- and $n$-cores is finite:

\begin{theorem}(\cite{anderson})\label{mn-core from semimod}
\label{and}
The number of partitions that are simultaneously $m$- and $n$-cores equals 
$$\frac{1}{m+n}\binom{m+n}{n}.$$
\end{theorem}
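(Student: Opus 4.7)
The plan is to construct an explicit bijection between simultaneous $(m,n)$-cores and $0$-normalized $\Gamma_{m,n}$-semimodules. Combined with the bijection $D$ established earlier (which identifies $0$-normalized $\Gamma$-semimodules with Young diagrams below the diagonal in an $m\times n$ rectangle, i.e.\ with elements of $Y_{m,n}$), this will yield $|\{(m,n)\text{-cores}\}|=|Y_{m,n}|=c_{m,n}=\frac{1}{m+n}\binom{m+n}{n}$.

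I would begin by encoding a partition $\lambda$ by its Maya diagram
\[
M(\lambda):=\{\lambda_i-i:i\ge 1\}\subset\mathbb Z,
\]
using the convention $\lambda_i=0$ for $i$ greater than the number of parts of $\lambda$. This is the standard bijection between partitions and subsets of $\mathbb Z$ that differ from $\mathbb Z_{<0}$ by a finite, charge-balanced symmetric difference: $|M(\lambda)\cap\mathbb Z_{\ge 0}|=|\mathbb Z_{<0}\setminus M(\lambda)|$. The key classical lemma is that $\lambda$ is an $n$-core if and only if $M(\lambda)$ is closed under $s\mapsto s-n$: an $n$-rim-hook of $\lambda$ corresponds precisely to a pair $s\in M(\lambda),\ s-n\notin M(\lambda)$, and removing the rim hook replaces $s$ by $s-n$. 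Consequently, $\lambda$ is a simultaneous $(m,n)$-core iff the complement $\Delta(\lambda):=\mathbb Z\setminus M(\lambda)$ satisfies $\Delta(\lambda)+\Gamma\subset\Delta(\lambda)$; since $\Delta(\lambda)$ is bounded below, its normalization $\overline{\Delta}(\lambda):=\Delta(\lambda)-\min\Delta(\lambda)$ is a $0$-normalized $\Gamma$-semimodule.

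For the inverse map, given a $0$-normalized $\Gamma$-semimodule $\Delta$, I would set $c:=|\mathbb Z_{\ge 0}\setminus\Delta|$ and define $M:=\mathbb Z\setminus(\Delta-c)$. Closure of $\Delta-c$ under $+m$ and $+n$ passes to closure of $M$ under $-m$ and $-n$, so $M$ is the Maya diagram of an $(m,n)$-core provided it is charge-balanced. The main technical point — and the only step requiring real care — is checking that this particular shift by $-c$ makes the charge of $M$ zero, and that the two constructions are mutually inverse. Both facts reduce to the identity
\[
-\min\Delta(\lambda)=|\mathbb Z_{\ge 0}\setminus\overline{\Delta}(\lambda)|,
\]
which I would prove by a direct count: grouping the negative elements of $\Delta(\lambda)$ and the non-negative elements of $M(\lambda)$ and matching them via the charge-zero balance. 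Once the bijection is in place, invoking the $D$ correspondence completes the proof.
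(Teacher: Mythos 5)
Your proof is correct and follows essentially the same route as the paper's. Both construct the bijection between simultaneous $(m,n)$-cores and $0$-normalized $\Gamma_{m,n}$-semimodules --- you in the language of Maya diagrams and charge, the paper by reading off the N/W boundary lattice path directly from $\Delta$, which is the same encoding up to complement and shift --- and both rest on the identical key observation that an $n$-hook of $\lambda$ corresponds to a pair $x\in\Delta$, $x+n\notin\Delta$, after which the earlier correspondence $D$ with $Y_{m,n}$ gives the count.
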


The proof in \cite{anderson} uses explicit bijection between $(m,n)$-cores and lattice paths in $m\times n$ rectangle below the diagonal.
Following the analogy between such paths and $\Gamma_{m,n}$--semimodules, we would like to present Anderson's bijection in slightly different form.

\begin{proof}
Given a set $0\in \Delta\subset \mathbb{Z}_{\ge 0}$, we construct the partition $P(\Delta)$ by the following rule. We start from $0\in \Delta$ and read all consecutive integers. If $x\in \Delta$, we move north by 1, if $x\notin \Delta$, we move west by 1. The resulting lattice path bounds from above the Young diagram of the partition $P(\Delta)$.

The partition $P(\Delta)$ has a hook of length $m$ if and only if there are integers $x\in \Delta$ and $y\notin \Delta$ such that $y=x+m$. Therefore $P(\Delta)$ is a simultaneous $(m,n)$-core if and only if $\Delta$ is a $\Gamma_{m,n}$--semimodule. 
\end{proof}

\begin{example}
The $(3,4)$-core corresponding to $\Delta=\{0,3,4,6,\ldots\}$ is shown in Figure \ref{34core}. 

\begin{figure}
\centering
\begin{tikzpicture}
\draw [line width=0.5mm] (1.5,0)--(0.5,0)--(0.5,0.5)--(-0.5,0.5)--(-0.5,1.5)--(-1,1.5)--(-1,2.5);
\draw (0.5,0)--(-1,0)--(-1,1.5);
\draw (0,0)--(0,0.5);
\draw (-0.5,0)--(-0.5,0.5)--(-1,0.5);
\draw (-0.5,1)--(-1,1);
\draw (3,1) node {0};
\draw (3.4,1) node {1};
\draw (3.8,1) node {2};
\draw (4.2,1) node {3};
\draw (4.6,1) node {4};
\draw (5.0,1) node {5};
\draw (5.4,1) node {6};
\draw (5.8,1) node {7};

\draw (3,0.5) node {N};
\draw (3.4,0.5) node {W};
\draw (3.8,0.5) node {W};
\draw (4.2,0.5) node {N};
\draw (4.6,0.5) node {N};
\draw (5.0,0.5) node {W};
\draw (5.4,0.5) node {N};
\draw (5.8,0.5) node {N};

\end{tikzpicture}
\caption{$(3,4)$ core}
\label{34core}
\end{figure}
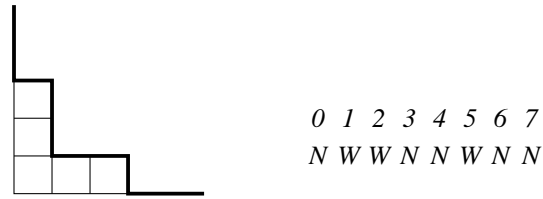
\end{example}
 
\begin{lemma}
\label{conjcore}
The $(m,n)$-core corresponding to $\widehat{\Delta}$ is conjugate to the $(m,n)$-core for $\Delta$.
\end{lemma}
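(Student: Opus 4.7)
The plan is to compare the two boundary paths produced by the construction in Theorem \ref{and} and to show that one is the reversal of the other with the roles of N and W exchanged, which is precisely the passage to the conjugate partition.

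First, I would pin down the finite range that actually carries all the information of the path. Let $M=\max(\mathbb{Z}\setminus\Delta)$. For every integer $x>M$ we have $x\in\Delta$, contributing only N-steps which do not create new boxes, so the entire northwest boundary of $P(\Delta)$ is recorded by the sequence of N/W steps at positions $x=0,1,\dots,M$. By the proof of Lemma \ref{hathat} we have $\max(\mathbb{Z}\setminus\widehat{\Delta})=M$ as well, so the same range $\{0,1,\dots,M\}$ records the boundary of $P(\widehat{\Delta})$.

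Next, I would use the description \eqref{DeltaHat}: for $0\le k\le M$,
\[
k\in\widehat{\Delta}\iff M-k\notin\Delta.
\]
Hence the step taken by the construction at position $k$ when building $P(\widehat{\Delta})$ is N if and only if the step at position $M-k$ when building $P(\Delta)$ is W, and vice versa. In other words, the length-$(M+1)$ word of N/W moves for $\widehat{\Delta}$ is obtained from that of $\Delta$ by reversing the order of letters and interchanging N$\leftrightarrow$W.

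Finally, I would invoke the standard fact that the northwest boundary word of a Young diagram $\lambda$, read in reverse with the two letters swapped, is precisely the boundary word of the conjugate diagram $\lambda^{T}$ (reflecting across the main diagonal interchanges horizontal and vertical steps and reverses the orientation of the path). Combining this with the previous step gives $P(\widehat{\Delta})=P(\Delta)^{T}$, which is the claim.

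The only genuine step is the index-matching in the middle paragraph; the rest is bookkeeping. The main thing to be careful about is that the construction of $P(\Delta)$ really is determined by the word of N/W moves at positions $0,\dots,M$ (so the comparison at \emph{those} positions suffices), and that the reversal-plus-swap operation at the level of boundary words corresponds exactly to conjugation of partitions, rather than to some off-by-one shifted operation.
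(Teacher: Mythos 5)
Your proof is correct and follows essentially the same approach as the paper's: both read off the boundary paths $P(\Delta)$ and $P(\widehat{\Delta})$ from the N/W words and use the duality $\widehat{\Delta}=M-(\mathbb{Z}\setminus\Delta)$ to relate them. The paper's version is terser and leaves the reversal-plus-swap-equals-conjugation step implicit; you spell out the index matching $k\leftrightarrow M-k$ and the finite range $\{0,\dots,M\}$ explicitly, which makes the argument cleaner but is not a different method.
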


\begin{proof}
By the equation (\ref{DeltaHat}), $\widehat{\Delta}=\max(\mathbb{Z}\setminus \Delta)-(\mathbb{Z}\setminus \Delta)$. Therefore to $x\in \Delta$ we associate an N step in the boundary of the $(m,n)$-core
corresponding to $\Delta$ and a W step in the boundary of the $(m,n)$-core
corresponding to $\widehat{\Delta}$. Analogously,  to $x\not\in \Delta$ we associate a W step in the boundary of the $(m,n)$-core
corresponding to $\Delta$ and an N step in the boundary of the $(m,n)$-core
corresponding to $\widehat{\Delta}$. 
\end{proof}

\begin{theorem}
The number of self-dual $\Gamma$--semimodules equals 
$$\binom{\lfloor \frac{m}{2}\rfloor+\lfloor \frac{n}{2}\rfloor}{ \lfloor \frac{m}{2}\rfloor}.$$
\end{theorem}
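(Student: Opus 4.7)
The plan is to reduce, via Lemma~\ref{conjcore}, the counting problem to the enumeration of self-conjugate simultaneous $(m,n)$-core partitions, and then to appeal to the known count for the latter.

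First, Anderson's bijection $\Delta\mapsto P(\Delta)$ from the proof of Theorem~\ref{mn-core from semimod} identifies zero-normalized $\Gamma_{m,n}$-semimodules with simultaneous $(m,n)$-core partitions. Lemma~\ref{conjcore} asserts that under this bijection the involution $\Delta\mapsto\widehat{\Delta}$ on semimodules corresponds to the conjugation involution $\lambda\mapsto\lambda^T$ on partitions. Consequently $\widehat{\Delta}=\Delta$ if and only if $P(\Delta)$ is a self-conjugate simultaneous $(m,n)$-core, so
$$
\#\{\text{self-dual } \Gamma_{m,n}\text{-semimodules}\}=\#\{\text{self-conjugate simultaneous }(m,n)\text{-cores}\}.
$$

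Second, for coprime $m,n$ the number of self-conjugate simultaneous $(m,n)$-cores equals $\binom{\lfloor m/2\rfloor+\lfloor n/2\rfloor}{\lfloor m/2\rfloor}$; this is the main theorem of Ford--Mai--Sagan \cite{FMS}. Combining the two steps yields the stated formula.

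The main obstacle is the second step, i.e.\ the enumeration of self-conjugate $(m,n)$-cores; the first step is an immediate consequence of Lemma~\ref{conjcore}. If one prefers to give a self-contained argument without citing \cite{FMS}, the natural strategy is as follows. By (\ref{DeltaHat}), a self-dual $\Delta\neq\mathbb{Z}_{\ge 0}$ must have $M:=\max(\mathbb{Z}_{\ge 0}\setminus\Delta)$ odd, and the semimodule is recovered from its restriction to $[0,(M-1)/2]$ via the antisymmetry $y\in\Delta\iff M-y\notin\Delta$. Encoding this restriction as a lattice path (N-step when $i\in\Delta$, W-step otherwise) and propagating the $\Gamma$-closure through the self-duality should identify self-dual $\Delta$'s with arbitrary lattice paths having $\lfloor m/2\rfloor$ $W$-steps and $\lfloor n/2\rfloor$ $N$-steps, with no ``below-diagonal'' constraint, giving the binomial directly. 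The delicate part of this alternative is the parity bookkeeping: since exactly one of $m,n$ may be even or both may be odd, one must treat these two cases separately when analyzing the middle of the sequence $s_0\cdots s_M$ and verifying that the resulting half-path is genuinely unconstrained.
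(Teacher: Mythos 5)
Your proof is correct and follows the same route as the paper: Lemma~\ref{conjcore} converts self-dual semimodules to self-conjugate $(m,n)$-cores, and then one invokes the enumeration of the latter from \cite{FMS}. One small slip: the authors of \cite{FMS} are Ford, Mai, and Sze, not Sagan.
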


\begin{proof}
By Lemma \ref{conjcore} the $\Gamma$--semimodules such that $\Delta=\widehat{\Delta}$ correspond to the self-conjugate $(m,n)$-cores.
The number of such cores was computed by B. Ford, H. Mai and L. Sze in \cite{FMS}, and it is equal to
$\binom{\lfloor \frac{m}{2}\rfloor+\lfloor \frac{n}{2}\rfloor}{ \lfloor \frac{m}{2}\rfloor}.$
\end{proof}

\begin{remark}(\cite{arms}) The number of self-dual modules equals 
$$\binom{\lfloor \frac{m}{2}\rfloor+\lfloor \frac{n}{2}\rfloor}{ \lfloor \frac{m}{2}\rfloor}=\frac{[(m+n-1)!]_{q}}{[m!]_{q}[n!]_{q}}|_{q=-1}.$$
\end{remark}

Let us give a geometric interpretation of the construction from Theorem \ref{and}. 
As it was discussed in the Introduction, the vector space $V=\mathbb{C}[[t]]/t^{2\delta}\mathbb{C}[[t]]$ comes with a natural filtration
$$
V=V_{2\delta}\supset V_{2\delta-1}\supset\ldots \supset V_{0}=0,\quad V_{i}:=t^{2\delta-i}V.
$$
Therefore $Gr(\delta,V)$ has a natural Schubert cell decomposition (see e.g. \cite[p. 74-75]{mist}), which can be described as follows. 

Let $P$ be a Young diagram contained in a $\delta\times\delta$ square. Let $p_1\le\dots\le p_{\delta}$ be its rows. The Schubert cell $C_P\subset Gr(\delta,V)$ consists of subspaces $W\subset V$ such that
$$
\dim(W\cap V_i)=\sharp \{j:p_j+j\le i\}.
$$

Equivalently, one can assign the diagram $P(W)$ to a subspace $W\subset V$ in the following way. Define the subset $\Delta(W)\subset\mathbb Z_{\ge 0}$ as follows:
$$
\Delta(W)=\{d\in\mathbb Z:\exists p\in W, p\in V_{2\delta-d}\setminus V_{2\delta-d-1}\}\cup [t^{2\delta},\infty).
$$
Now let us construct the diagram $P(W)$ from the subset $\Delta(W)$ as in the Theorem \ref{mn-core from semimod}. One can check that $W$ belongs to the Schubert cell $C_{P(W)}\subset Gr(\delta, V).$

Note that according to the construction, $P(W)$ is a simultaneous $m,n$-core iff $\Delta(W)$ is a $\Gamma_{m,n}$--semimodule. It follows immediately that if $W\in\jcx$ then $\Delta(W)$ is a $\Gamma_{m,n}$--semimodule. On the other hand, J. Piontkowski showed in \cite{piont} that for a fixed $\Gamma_{m,n}$--semimodule $\Delta$ there always exist $W\in\jcx$ such that $\Delta(W)=\Delta.$ More precisely, modules $W\in\jcx$ with a fixed semimodule $\Delta(W)$ form a cell in the Piontkowski's cell decomposition of $\jcx.$

Therefore, one gets the following

\begin{theorem}\label{grcore}
Let $P$ be a Young diagram contained in a $\delta\times\delta$ square. Let $C_P\subset Gr(\delta, V)$ be the corresponding Schubert cell. Then the intersection $C_P\cap\jcx$ is non-empty iff $P$ is a simultaneous $m,n$-core, in which case it is the corresponding Piontkowski's affine cell. 
\end{theorem}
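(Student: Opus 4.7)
The plan is to assemble the statement directly from the ingredients laid out in the paragraphs immediately preceding it, with only one nontrivial consistency check.

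First I would verify that the map $W\mapsto P(W)$ described above really does place $W$ in the Schubert cell $C_{P(W)}$. By construction, $\Delta(W)$ records the set of $d$ for which $\dim(W\cap V_{2\delta-d})>\dim(W\cap V_{2\delta-d-1})$, together with a tail; equivalently, $\Delta(W)$ is the set of valuations (shifted) of nonzero vectors in $W$. If $p_1\le\dots\le p_\delta$ are the rows of $P(W)$, the path construction from Theorem \ref{mn-core from semimod} translates each element of $\Delta(W)$ into a north step and each nonelement (below the upper threshold) into a west step, so the partial counts of north steps up to height $i$ give exactly $\sharp\{j:p_j+j\le i\}=\dim(W\cap V_i)$. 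This matches the Schubert-cell definition of $C_P$ and shows $W\in C_{P(W)}$, so $W\in C_P\cap\jcx$ is equivalent to $P=P(W)$ together with $W\in\jcx$.

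Next I would check the $R$-invariance half. If $W\in\jcx$, then $W$ is stable under multiplication by $t^m$ and $t^n$ (since $x\mapsto t^m,\,y\mapsto t^n$ generate $R$ modulo high powers of $t$). Multiplication by $t^m$ (resp.\ $t^n$) on $\mathbb{C}[[t]]/t^{2\delta}\mathbb{C}[[t]]$ shifts the valuation by $m$ (resp.\ $n$), so stability translates into $\Delta(W)+m\subset\Delta(W)$ and $\Delta(W)+n\subset\Delta(W)$, i.e.\ $\Delta(W)+\Gamma_{m,n}\subset\Delta(W)$. Hence $\Delta(W)$ is a $\Gamma_{m,n}$-semimodule, and by Theorem \ref{mn-core from semimod} the partition $P(W)$ is a simultaneous $(m,n)$-core. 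This gives the ``only if'' direction: $C_P\cap\jcx\ne\emptyset$ forces $P$ to be an $(m,n)$-core.

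For the ``if'' direction, given an $(m,n)$-core $P$, Theorem \ref{mn-core from semimod} produces the corresponding $\Gamma_{m,n}$-semimodule $\Delta$ with $P(\Delta)=P$. Piontkowski's theorem, recalled above, asserts that the locus
\[
\{W\in\jcx:\Delta(W)=\Delta\}
\]
is nonempty and equals the Piontkowski affine cell indexed by $\Delta$. By the first step, this locus sits inside $C_P$, and conversely every $W\in C_P\cap\jcx$ lies in it because $P(W)=P$ determines $\Delta(W)=\Delta$. Therefore $C_P\cap\jcx$ coincides with the Piontkowski cell, which is the claim.

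The only real obstacle is the first step, the matching of the two parametrizations of Schubert cells (via $\dim(W\cap V_i)$ on one side and via the N/W-path of Anderson's bijection on the other). Everything else is either a direct consequence of $R$-invariance or a citation: Theorem \ref{mn-core from semimod} for the semimodule/core dictionary and Piontkowski's result for the existence and cell structure on the fiber over $\Delta$.
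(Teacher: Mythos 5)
Your proposal is correct and follows essentially the same route as the paper, which does not give a separate ``Proof'' environment for Theorem~\ref{grcore} but rather lays out the three ingredients in the preceding paragraphs: (a) the identification of the Schubert cell containing $W$ with $C_{P(W)}$ (which the paper dismisses with ``one can check''), (b) $R$-invariance of $W$ forcing $\Delta(W)$ to be a $\Gamma_{m,n}$-semimodule, hence $P(W)$ an $(m,n)$-core, and (c) Piontkowski's result that each semimodule $\Delta$ is realized by a nonempty affine cell $\{W\in\jcx:\Delta(W)=\Delta\}$. You correctly flag (a) as the only place requiring genuine verification and correctly observe that $\Delta(W)\mapsto P(W)$ is injective even before restricting to semimodules, so $P(W)=P$ pins down $\Delta(W)$.
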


\section{Bounce statistics and Poincar\'e polynomials}

Let $\Delta$ be a $\Gamma_{m,n}$--semimodule. Let us recall the following definition:
\begin{definition}
A number $a\in \Delta$ is called an $m$-generator if $a-m\notin\Delta.$ A number $a\notin \Delta$ is called an $m$-cogenerator if $a+m\in\Delta.$ 
\end{definition}
In this section we construct the inverse maps $G_m^{-1}$ in the cases $m=kn\pm 1.$ In other words, we reconstruct a $\Gamma_{m,n}$--semimodule $\Delta$ from the collection of integers $g_m(a_0), g_m(a_1),\dots, g_m(a_{m-1}),$ where $\{0=a_0<\dots<a_{m-1}\}$ are the $m$-generators of $\Delta$. Recall that the function $g_m(x)$ is defined by the formula
$$
g_{m}(x)=\sharp \left([x,x+n)\setminus \Delta\right),
$$
To simplify notations, we will use $g(x)$ instead of $g_m(x)$ in this Section.

In Sections \ref{SS kn+1} and \ref{SS kn-1} we discuss the cases $m=kn+1$ 
and $m=kn-1$ respectively. The logic in both cases is quite similar: first we reconstruct the bounce tree (see Definitions \ref{def tree} and \ref{def tree-1}), and  then use it to reconstruct the semimodule $\Delta$.

In Section \ref{SS bounce} we compare this procedure and the bounce tree with the constructions of Haglund and Loehr.

\subsection{The case $m=kn+1.$}\label{SS kn+1}

 We start with some general facts on generators and cogenerators.

\begin{lemma}\label{arithmetic progression}
Suppose that $x\in \Delta$ is not an $m$-generator. Then $x+n$ is not an $m$-generator as well.
\end{lemma}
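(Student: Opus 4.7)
The plan is to use the definition of $m$-generator together with the semimodule axiom $\Delta+\Gamma\subset\Delta$, and nothing more. The hypothesis that $x\in\Delta$ is not an $m$-generator means, by the definition recalled just before the lemma, that $x-m\in\Delta$. I want to conclude that $x+n$ is not an $m$-generator, which (since $x+n$ does lie in $\Delta$, as noted below) amounts to showing $(x+n)-m\in\Delta$.

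First I would observe that $x+n\in\Delta$: indeed, $x\in\Delta$ and $n\in\Gamma$, so $x+n\in\Delta+\Gamma\subset\Delta$. This handles the trivial case where $x+n$ might fail to be in $\Delta$ at all (it cannot).

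Next I would write $(x+n)-m=(x-m)+n$. Since $x-m\in\Delta$ and $n\in\Gamma$, the semimodule property gives $(x-m)+n\in\Delta$, that is, $(x+n)-m\in\Delta$. Therefore $x+n$ is in $\Delta$ but $(x+n)-m$ is also in $\Delta$, so $x+n$ fails the definition of an $m$-generator.

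There is no real obstacle here; the whole statement is a one-line consequence of the fact that $n$ belongs to the generating semigroup $\Gamma=\Gamma_{m,n}$, so adding $n$ commutes with the operation ``subtract $m$'' at the level of membership in $\Delta$. The same argument would show symmetrically that adding any element of $\Gamma$ (in particular $km$ for any $k\ge 0$, or any $n$-multiple) to a non-$m$-generator yields a non-$m$-generator, which is presumably why this lemma appears as the first structural observation for the reconstruction procedure in Section \ref{SS kn+1}.
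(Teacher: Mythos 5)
Your proof is correct and is essentially the paper's own argument: from $x-m\in\Delta$ and $n\in\Gamma$ conclude $(x+n)-m=(x-m)+n\in\Delta$. The only addition is your explicit remark that $x+n\in\Delta$, which the paper leaves implicit.
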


\begin{proof}
Indeed, if $x\in\Delta$ is not an $m$-generator, then $x-m\in\Delta.$ But then $x-m+n=x+n-m\in\Delta.$ Therefore, $x+n$ is not an $m$-generator.
\end{proof}


\begin{lemma}
\label{numgens}
Fix an integer $x\in\mathbb Z.$ We have
\begin{enumerate}
\item  The number of $m$-generators in $[x,x+n)$ equals  $g(x-m)-g(x).$
\item  The number of $m$-cogenerators in $[x,x+n)$ equals  $g(x)-g(x+m).$
\item The number of $m$-generators in the interval $[x,x+n)$ is equal to the number of $n$-cogenerators in the interval $[x-m,x).$
\end{enumerate}
\end{lemma}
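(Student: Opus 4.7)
I plan to treat all three parts by one uniform indicator-function calculation, leveraging the semimodule property $\Delta+\Gamma\subset\Delta$ in the compact form $\chi(a-m)\chi(a)=\chi(a-m)$ and $\chi(b)\chi(b+m)=\chi(b)$ (with the analogous identities for $n$), where $\chi=\chi_\Delta$ denotes the indicator function.

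For part (1), I would expand
\[
\sharp\{m\text{-generators in }[x,x+n)\}=\sum_{a=x}^{x+n-1}\chi(a)\bigl(1-\chi(a-m)\bigr),
\]
and use the semimodule identity to simplify the summand to $\chi(a)-\chi(a-m)$. Summing then gives $|\Delta\cap[x,x+n)|-|\Delta\cap[x-m,x-m+n)|$, and since an integer interval of length $n$ meets $\Delta$ in exactly $n-g(\cdot)$ points this equals $g(x-m)-g(x)$. Part (2) is the mirror computation: $(1-\chi(b))\chi(b+m)=\chi(b+m)-\chi(b)$, and summing over $b\in[x,x+n)$ yields $g(x)-g(x+m)$.

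For part (3), I plan to apply the same trick to the defining sum for $n$-cogenerators in $[x-m,x)$, which after the identity $(1-\chi(b))\chi(b+n)=\chi(b+n)-\chi(b)$ telescopes to
\[
|\Delta\cap[x-m+n,x+n)|\;-\;|\Delta\cap[x-m,x)|.
\]
By part (1), the number of $m$-generators in $[x,x+n)$ equals $|\Delta\cap[x,x+n)|-|\Delta\cap[x-m,x-m+n)|$. To see these two expressions agree, I would rearrange the desired equality to
\[
|\Delta\cap[x-m,x)|+|\Delta\cap[x,x+n)|=|\Delta\cap[x-m,x-m+n)|+|\Delta\cap[x-m+n,x+n)|,
\]
and observe that both sides count $|\Delta\cap[x-m,x+n)|$ via the two disjoint decompositions $[x-m,x+n)=[x-m,x)\sqcup[x,x+n)=[x-m,x-m+n)\sqcup[x-m+n,x+n)$ of the length-$(m+n)$ interval at $x$ and at $x-m+n$.

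There is no real obstacle: parts (1) and (2) are a routine telescoping, and the only mildly nontrivial observation is that part (3) encodes precisely the fact that the length-$(m+n)$ interval $[x-m,x+n)$ splits into pieces of lengths $m$ and $n$ in two distinct ways, which forces the ``$m$-generator deficit'' on the right half to equal the ``$n$-cogenerator deficit'' on the left half.
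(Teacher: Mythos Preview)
Your argument is correct and is essentially the paper's proof recast in indicator-function language: the paper proves (1) by observing that the shift $y\mapsto y+m$ carries $[x-m,x-m+n)\setminus\Delta$ bijectively onto the disjoint union of the $m$-generators in $[x,x+n)$ with $[x,x+n)\setminus\Delta$, which is exactly your identity $\chi(a)\bigl(1-\chi(a-m)\bigr)=\chi(a)-\chi(a-m)$ summed over $a\in[x,x+n)$. Your explicit two-way decomposition of $[x-m,x+n)$ for part (3) is precisely what underlies the paper's terse ``follows immediately from the first two parts.''
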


\begin{proof}
\begin{enumerate}
\item For any integer $y\in [x-m,x-m+n)\backslash\Delta$ the number $y+m$ is either an $m$-generator in $[x,x+m)$ or it is in $[x,x+m)\backslash\Delta.$ Also, for any $z\in [x,x+m)\backslash\Delta$ one has $z-m\in [x-m,x-m+n)\backslash\Delta.$ Therefore, the number of $m$-generators in $[x,x+m)$ is equal to the number of elements in $[x-m,x-m+n)\backslash\Delta$ minus number of elements in $[x,x+n)\backslash\Delta,$ which is $g(x-m)-g(x).$
\item Similar to the previous part: an $m$-cogenerator in $[x,x+n)$ is an integer $y\in [x,x+n)\backslash\Delta,$ such that $y+m\notin [x+m,x+m+n)\backslash\Delta.$
\item Follows immediately from the first two parts.
\end{enumerate}
\end{proof}

Let $m=kn+1$ from now until the end of the Section \ref{SS kn+1}. Our goal is to reconstruct the semimodule $\Delta$ from the numbers $g(a_0),\dots,g(a_{kn}),$ where $a_0,\ldots a_{kn}$ are the $(kn+1)$-generators of $\Delta$ listed in the increasing order. 

\begin{lemma}\label{first in a sequence is a generator lemma}
Suppose that $x\in\Delta$ and $x-\alpha n-1\notin\Delta$ for some $\alpha\in\{0,\dots,k\}$. Then $x$ is a $(kn+1)$-generator.
\end{lemma}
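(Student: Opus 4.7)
The plan is to argue by contradiction, exploiting the $\Gamma$--semimodule property that closure under addition of $n$ forces a long arithmetic progression.

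First I would unpack the hypothesis: we have $m = kn+1$, so $m - 1 = kn$, and the integers $x - \alpha n - 1$ for $\alpha = 0, 1, \dots, k$ form a descending arithmetic progression with common difference $n$ starting at $x-1$ and ending at $x - kn - 1 = x - m$. The hypothesis singles out one member of this progression that is not in $\Delta$.

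Next I would assume, for contradiction, that $x$ is not a $(kn+1)$-generator. By definition this means $x - m \in \Delta$, i.e. $x - kn - 1 \in \Delta$. Since $\Delta + \Gamma \subset \Delta$ and $n \in \Gamma$, adding any nonnegative multiple of $n$ to an element of $\Delta$ lands back in $\Delta$. In particular, for every $j \ge 0$ we have
\[
x - kn - 1 + jn \in \Delta.
\]
Taking $j = k - \alpha$ (which is $\ge 0$ because $\alpha \le k$) yields $x - \alpha n - 1 \in \Delta$, contradicting the hypothesis $x - \alpha n - 1 \notin \Delta$. Hence $x - m \notin \Delta$, and because $x \in \Delta$ by assumption, $x$ is a $(kn+1)$-generator.

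There is essentially no obstacle here: the argument is a one-line application of the semimodule closure condition, and the fact that $m - 1$ is a multiple of $n$ (the defining feature of the $m = kn+1$ case) is exactly what lets the progression $\{x - \alpha n - 1\}$ reach from $x - 1$ all the way down to $x - m$ in steps of $n$. The only thing to double-check in writing up is the index arithmetic, namely that $\alpha \in \{0, \dots, k\}$ guarantees $k - \alpha \ge 0$ so the semimodule closure can be applied.
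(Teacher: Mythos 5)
Your proof is correct and takes essentially the same approach as the paper: assume $x$ is not a $(kn+1)$-generator, so $x - kn - 1 \in \Delta$, then add $(k-\alpha)n \in \Gamma$ to reach $x - \alpha n - 1 \in \Delta$, contradicting the hypothesis.
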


\begin{proof}
Indeed, if $x$ is not a $(kn+1)$-generator, than $x-kn-1\in\Delta.$ But then $(x-kn-1)+(k-\alpha)n=x-\alpha n-1\in\Delta.$ Contradiction. 
\end{proof}

\begin{definition}
For $x\in \Delta$ we define $a_{-}(x)$ to be the maximal $(kn+1)$-generator less than or equal to $x$.
\end{definition}

\begin{corollary}\label{first in a sequence is a generator}
For any $x\in \Delta$ one has $[a_{-}(x),x]\subset \Delta$.
\end{corollary}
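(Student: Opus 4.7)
The plan is to prove this by contradiction, reducing to Lemma \ref{first in a sequence is a generator lemma} with the choice $\alpha = 0$. Specialized to $\alpha = 0$, that lemma reads: if $z \in \Delta$ and $z - 1 \notin \Delta$, then $z$ is a $(kn+1)$-generator. This is the only ingredient I will need.

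First I would assume, towards a contradiction, that there exists some integer $y$ with $a_{-}(x) \le y \le x$ such that $y \notin \Delta$. Since by definition $a_{-}(x)$ is a $(kn+1)$-generator (so lies in $\Delta$) and $x \in \Delta$ by hypothesis, the offending $y$ must satisfy $a_{-}(x) < y < x$.

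Next I would pick $y^{*}$ to be the \emph{largest} integer in the interval $[a_{-}(x), x]$ that does not belong to $\Delta$. Maximality forces $y^{*} + 1 \in \Delta$ (since $y^{*} + 1 \le x$ and no larger element of $[a_{-}(x), x]$ is outside $\Delta$), while $y^{*} \notin \Delta$ by construction. Applying Lemma \ref{first in a sequence is a generator lemma} with $z = y^{*} + 1$ and $\alpha = 0$ then shows that $y^{*} + 1$ is a $(kn+1)$-generator of $\Delta$.

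Finally, this produces the contradiction: $y^{*} + 1$ is a $(kn+1)$-generator satisfying $a_{-}(x) < y^{*} + 1 \le x$, which is incompatible with $a_{-}(x)$ being the \emph{maximal} $(kn+1)$-generator less than or equal to $x$. I do not expect any real obstacle here — the statement is essentially a one-line consequence of the $\alpha = 0$ case of the preceding lemma, and the only thing to be careful about is that $y^{*}$ is chosen as a maximum (not a minimum), so that the integer just above it lies in $\Delta$ and the lemma can be applied.
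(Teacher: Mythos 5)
Your proof is correct and takes essentially the same approach as the paper, which simply states that the corollary follows from the $\alpha=0$ case of Lemma \ref{first in a sequence is a generator lemma}; you have spelled out the contradiction argument that the paper leaves implicit.
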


\begin{proof}
Follows from Lemma \ref{first in a sequence is a generator lemma} with $\alpha=0.$
\end{proof}

\begin{lemma}\label{gaps}
Consider a $(kn+1)$-generator $a_i.$ Then for any $l>0,$ the interval $J_l:=[a_i-l(kn+1),a_i-lkn-1]$ has empty intersection with $\Delta.$
\end{lemma}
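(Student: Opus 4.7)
My plan is a direct argument, no induction required. The intuition is that if some element of $J_l$ were in $\Delta$, then adding a carefully chosen element of the semigroup $\Gamma$ would land us exactly at $a_i-m$, contradicting the assumption that $a_i$ is an $m$-generator, i.e.\ that $a_i-m\notin\Delta$.

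The first step is to parametrize $J_l$: an arbitrary element can be written as $x=a_i-lkn-j$ with $j\in\{1,\dots,l\}$ (so $j=l$ corresponds to the left endpoint $a_i-l(kn+1)$ and $j=1$ to the right endpoint $a_i-lkn-1$). The second step is to compute the offset $a_i-m-x=(l-1)kn+(j-1)$ and to exhibit it as a nonnegative integer combination of the generators $m=kn+1$ and $n$ of $\Gamma$. Using $(j-1)$ copies of $m$ contributes $(j-1)$ ones (matching the $(j-1)$ in the offset) and $(j-1)k$ copies of $n$; the remaining $(l-1)k-(j-1)k=(l-j)k$ copies of $n$ are nonnegative precisely because $j\le l$. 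Hence $\gamma:=(j-1)\,m+(l-j)k\,n$ lies in $\Gamma$ and satisfies $x+\gamma=a_i-m$.

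The third and final step assembles the contradiction: if $x\in\Delta$, then $x+\gamma\in\Delta$ by the semimodule axiom $\Delta+\Gamma\subset\Delta$, so $a_i-m\in\Delta$, which is false. The only step requiring any care is verifying the arithmetic identity $x+(j-1)m+(l-j)kn=a_i-m$, which is a routine expansion of $m=kn+1$. The hypothesis $1\le j\le l$ is exactly what guarantees both coefficients in $\gamma$ are nonnegative, and the base case $l=1$, $j=1$ collapses to $\gamma=0$, reducing the claim directly to the definition of an $m$-generator. I do not anticipate any genuine obstacle; the entire argument rests on spotting the right decomposition of the offset.
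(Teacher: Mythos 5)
Your proof is correct. The arithmetic checks out: for $x=a_i-lkn-j$ with $1\le j\le l$, one has $a_i-m-x=(l-1)kn+(j-1)$, and your decomposition $\gamma=(j-1)m+(l-j)k\,n$ indeed has nonnegative coefficients (since $j\ge 1$ and $j\le l$) and sums to the right offset, so $x+\gamma=a_i-m$ and the semimodule axiom gives the contradiction.

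The paper proves the same lemma by induction on $l$, using the decomposition $J_{l+1}=\{a_i-(l+1)(kn+1)\}\cup(J_l-kn)$ together with the observations that $J_l\cap\Delta=\emptyset$ forces $(J_l-kn)\cap\Delta=\emptyset$ (since $kn\in\Gamma$) and that $a_i-(l+1)m\notin\Delta$ (since $a_i-m\notin\Delta$). Unwinding that recursion shows $J_l=\{a_i-jm-(l-j)kn : 1\le j\le l\}$, which is exactly your parametrization of $J_l$, and the induction is effectively assembling your $\gamma$ one $kn$ or one $m$ at a time. So the two proofs encode the same underlying computation; yours trades the bookkeeping of an inductive step for writing down the witness element of $\Gamma$ in closed form, which is arguably more transparent and avoids the (implicit, in the paper) verification that the singleton $\{a_i-(l+1)m\}$ avoids $\Delta$. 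Either is a perfectly acceptable way to present the argument.
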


\begin{proof}
The proof is by induction on $l.$ The case $l=1$ is clear. Then observe that 
$$
J_{l+1}=\{a_i-(l+1)(kn+1)\}\cup (J_l-kn). 
$$ 
\end{proof}

We illustrate Lemma \ref{gaps} with the Figure \ref{gapspic}.

\begin{figure}
\centering
\begin{tikzpicture}[scale=0.5]
\draw (0,0) grid +(21,1);

\draw (0.5,0.5) node {$\circ$};
\draw (1.5,0.5) node {$\circ$};
\draw (4.5,0.5) node {$\circ$};
\draw (7.5,0.5) node {$\circ$};
\draw (10.5,0.5) node {$\circ$};
\draw (20.5,0.5) node {$a_i$};

\draw (2,0) .. controls (3,-0.6) and (4,-0.6) .. (5,0);
\draw (5,0) .. controls (6,-0.6) and (7,-0.6) .. (8,0);
\draw (8,0) .. controls (9,-0.6) and (10,-0.6) .. (11,0);
\draw (11,0) .. controls (12,-0.6) and (13,-0.6) .. (14,0);
\draw (14,0) .. controls (15,-0.6) and (16,-0.6) .. (17,0);
\draw (17,0) .. controls (18,-0.6) and (19,-0.6) .. (20,0);

\draw (0,1) .. controls (3,1.9) and (7,1.9) .. (10,1);
\draw (10,1) .. controls (13,1.9) and (17,1.9) .. (20,1);

\end{tikzpicture}
\caption{In this picture $n=3,$ $k=3,$ $a_i$ is a $10$-generator, and "$\circ$"\ indicates an integer not in $\Delta.$ Note that the interval $[a_i-2(kn+1),a_i-2kn-1)]=[a_i-20,a_i-19]$ has empty intersection with $\Delta.$}
\label{gapspic}
\end{figure}

\begin{definition}
We introduce the following notations:
$$
N_{ij}=\left\lceil\frac{a_j-a_i}{n} \right\rceil\mbox{ and }
K_{ij}=\left\lceil\frac{a_j-a_i}{kn+1} \right\rceil.
$$
\end{definition}

\begin{corollary}\label{KtoN}
One has the following formula:
$
K_{ij}=\left\lceil\frac{N_{ij}}{k} \right\rceil.
$
\end{corollary}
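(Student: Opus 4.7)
The plan is to fix $d := a_j - a_i$ (with $i<j$, so $d > 0$) and set $L := \lceil N_{ij}/k\rceil$, and then to show directly that $d$ lies in the half-open interval $\bigl((L-1)(kn+1),\, L(kn+1)\bigr]$; this is equivalent to $K_{ij} = L$, which is the claimed identity.

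First, unwinding the definition of $L$ yields $(L-1)k < N_{ij} \le Lk$, and unwinding $N_{ij} = \lceil d/n\rceil$ yields $(N_{ij}-1)n < d \le N_{ij}\,n$. Chaining the two gives
$$
(L-1)kn \ \le\ (N_{ij}-1)n \ <\ d \ \le\ N_{ij}\,n \ \le\ Lkn.
$$
The upper bound $d \le Lkn < L(kn+1)$ is already what is needed. For the lower bound, however, we only have $d \ge (L-1)kn + 1$, which, when $L \ge 2$, falls short of the required $d \ge (L-1)(kn+1)+1 = (L-1)kn + L$ by exactly $L-1$ units.

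This gap is closed precisely by Lemma~\ref{gaps}. Applying it to the $(kn+1)$-generator $a_j$ with parameter $l = L-1 \ge 1$, we learn that the interval $[a_j-(L-1)(kn+1),\, a_j-(L-1)kn-1]$ is disjoint from $\Delta$. Since $a_i \in \Delta$, the difference $d$ cannot lie in $\bigl[(L-1)kn+1,\, (L-1)(kn+1)\bigr]$; combined with the already-established $d \ge (L-1)kn+1$, this forces $d > (L-1)(kn+1)$, as desired. The boundary case $L=1$ needs no input from the lemma, since $d \ge 1 > 0 = (L-1)(kn+1)$ is automatic. I expect no substantive obstacle beyond this arithmetic bookkeeping; all conceptual content sits in Lemma~\ref{gaps}, whose forbidden intervals are exactly the integers that would otherwise invalidate the naive ceiling identity $\lceil d/(kn+1)\rceil = \lceil \lceil d/n\rceil / k\rceil$ for generic $d$.
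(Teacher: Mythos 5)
Your proof is correct and rests on essentially the same mechanism as the paper's: both reduce the claim to the observation that $\lceil d/(kn+1)\rceil$ and $\lceil d/(kn)\rceil$ (with $d=a_j-a_i$) could only differ if $d$ landed in one of the windows $[lkn+1,\,l(kn+1)]$, and both then invoke Lemma~\ref{gaps} to exclude exactly those windows. The paper phrases this as a contradiction argument after stating the nested-ceiling identity explicitly, while you run it as a direct squeeze on $d$; the content is the same.
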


\begin{proof}
Note that 
$$
\left\lceil\frac{N_{ij}}{k} \right\rceil=\left\lceil\frac{\left\lceil\frac{a_j-a_i}{n} \right\rceil}{k} \right\rceil=\left\lceil\frac{a_j-a_i}{kn} \right\rceil.
$$ 
Suppose that $K_{ij}\neq\left\lceil\frac{N_{ij}}{k} \right\rceil.$ Then there exists $l,$ such that $a_j-a_i\le l(kn+1)$ and $a_j-a_i>lkn,$ which is equivalent to $a_i\in [a_j-l(kn+1),a_j-lkn).$ Therefore, by Lemma \ref{gaps}, $a_i\notin \Delta.$ Contradiction.
\end{proof}

There are two steps in the reconstruction of $\Delta$ . First we reconstruct the bounce tree $T_{\Delta},$ defined below, and then we use it to recover the $(kn+1)$-generators $a_0,\dots,a_{kn}.$

\begin{definition}
\label{def tree}
The oriented graph $T_{\Delta}$ is defined as follows. The vertices are the $(kn+1)$-generators of $\Delta:$\  $V_{\Delta}=\{a_0,\dots,a_{kn}\}.$ Let $a_i,a_j\in V_{\Delta}$ be $(kn+1)$-generators.  We draw an edge $a_i\rightarrow a_j$, if $i\neq kn$ and 
$$a_j=a_{-}(a_i+n).$$
See Figure \ref{tree_example} for an example of a tree $T_{\Delta}.$
\end{definition}

\begin{figure}
\centering
\begin{tikzpicture}
\draw (0,4) node {$0$};
\draw (1,3) node {$3$};
\draw (2,2) node {$6$};
\draw (3,1) node {$8$};
\draw (3,0) node {$11$};
\draw (4,2) node {$5$};
\draw (5,3) node {$2$};
\draw [->,>=stealth] (0.2,3.8)--(0.8,3.2);
\draw [->,>=stealth] (1.2,2.8)--(1.8,2.2);
\draw [->,>=stealth] (2.2,1.8)--(2.8,1.2);
\draw [->,>=stealth] (4.8,2.8)--(4.2,2.2);
\draw [->,>=stealth] (3.8,1.8)--(3.2,1.2);
\draw [->,>=stealth] (3,0.8)--(3,0.2);
\end{tikzpicture}
\caption{The $7$-generators of the $\Gamma_{3,7}$-semimodule $\Delta:=\{0,2,3,5,6,7,\dots\}$ are equal to 
$0,2,3,5,6,8,11.$ The tree $T_{\Delta}$ is presented in the diagram.}\label{tree_example}
\end{figure}
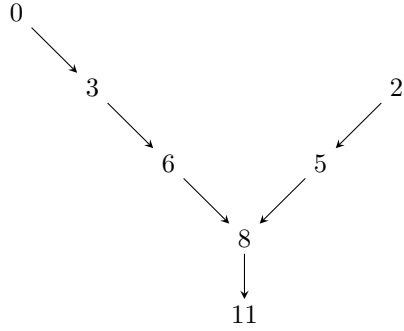

\begin{lemma}\label{tree_lemma}
The graph $T_{\Delta}$ satisfies the following properties: 
\begin{enumerate}
\item If $a_i\rightarrow a_j$ is an edge, then $i<j$. 

\item The graph $T_{\Delta}$ is a tree with the root $a_{kn}$, and all edges are oriented towards $a_{kn}$.  
 
\item The leaves of $T_{\Delta}$ are the $(kn+1,n)$-generators of $\Delta,$ i.e. they are simultaneous $n-$ and $(kn+1)-$ generators.
\end{enumerate}
\end{lemma}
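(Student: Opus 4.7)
The strategy is to prove the three parts in the order given; part (1) is the main technical step, while (2) is a bookkeeping consequence and (3) relies on a similar interplay between Corollary~\ref{first in a sequence is a generator} and Lemma~\ref{arithmetic progression}.

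For (1), fix an edge $a_i \to a_j$ with $i \ne kn$ and $a_j = a_-(a_i + n)$. Since $a_i + n \in \Delta$ and $a_i$ is itself an $m$-generator with $a_i \le a_i + n$, the maximality built into $a_-$ forces $a_j \ge a_i$, so it suffices to exclude $j = i$. Suppose $a_-(a_i + n) = a_i$; then no $m$-generator lies in $(a_i, a_i + n]$, and Corollary~\ref{first in a sequence is a generator} applied to $a_i + n$ yields $[a_i, a_i + n] \subset \Delta$. Since $n \in \Gamma$, iterating $x \in \Delta \Rightarrow x + n \in \Delta$ extends this to $[a_i, \infty) \subset \Delta$. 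Each of the non-generators $a_i + 1, \dots, a_i + n$ is then handled by Lemma~\ref{arithmetic progression} to conclude inductively that $a_i + r + \ell n$ is not an $m$-generator for every $r \in \{1, \dots, n\}$ and $\ell \ge 0$, hence no integer strictly above $a_i$ is an $m$-generator. Thus $a_i = a_{kn}$, contradicting $i \ne kn$.

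For (2), every vertex $a_i$ with $i \ne kn$ carries a unique outgoing edge by definition and $a_{kn}$ carries none, so there are exactly $kn = |V| - 1$ edges. Part (1) rules out directed cycles, and following outgoing edges from any vertex produces a strictly increasing sequence of indices that must terminate at $a_{kn}$, the only sink. The graph is therefore connected, acyclic, and of the right edge count to be a tree rooted at $a_{kn}$ with every edge oriented toward the root.

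For (3), the claim I will prove is that $a_i$ has an incoming edge iff $a_i - n \in \Delta$, which is the negation of $a_i$ being an $n$-generator. For the forward direction, if $a_i - n \in \Delta$ then applying Lemma~\ref{first in a sequence is a generator lemma} to $x = a_i - n$ with $\alpha = k - 1$ (note $x - \alpha n - 1 = a_i - m \notin \Delta$ because $a_i$ is an $m$-generator) shows $a_i - n$ is itself an $m$-generator, call it $a_{j_0}$; then $a_-(a_{j_0} + n) = a_-(a_i) = a_i$ gives the desired edge. Conversely, suppose $a_i$ has an incoming edge from $a_j$ but $a_i - n \notin \Delta$. Then $a_j > a_i - n$ (since $a_j \in \Delta$) and $a_j < a_i$ by (1), so $d := a_j + n - a_i \in \{1, \dots, n-1\}$. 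Corollary~\ref{first in a sequence is a generator} applied to $a_j + n$ places $a_i + 1 \in [a_i, a_j + n] \subset \Delta$. Because $n \in \Gamma$, the hypothesis $a_i - n \notin \Delta$ propagates to $a_i - \ell n \notin \Delta$ for every $\ell \ge 1$; in particular $(a_i + 1) - m = a_i - kn \notin \Delta$, so $a_i + 1$ is itself an $m$-generator, forcing $a_{i+1} = a_i + 1$. The edge condition then demands $a_{i+1} > a_j + n = a_i + d \ge a_i + 1 = a_{i+1}$, a contradiction; if $i = kn$, the existence of the $m$-generator $a_i + 1 > a_{kn}$ directly contradicts the maximality of $a_{kn}$.

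The main obstacle is the converse direction in (3): one has to play both halves of a push/pull argument — Corollary~\ref{first in a sequence is a generator} pushes $a_i + 1$ into $\Delta$ from just above $a_i$, while the downward propagation of $a_i - n \notin \Delta$ keeps $a_i - kn$ out of $\Delta$ — so that $a_i + 1$ is forced to be the very next $m$-generator, which then sabotages the maximality condition encoded in the definition of the edge.
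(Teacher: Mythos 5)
Your proof is correct and follows essentially the same approach as the paper: parts (1) and (2) match the paper's argument step by step, and in part (3) both directions use the same interplay of Corollary~\ref{first in a sequence is a generator}, Lemma~\ref{arithmetic progression}, and Lemma~\ref{first in a sequence is a generator lemma}. The only cosmetic differences are that in the forward direction of (3) you invoke Lemma~\ref{first in a sequence is a generator lemma} with $\alpha=k-1$ where the paper invokes Lemma~\ref{arithmetic progression} (the two are interchangeable here), and in the converse you show directly from $a_i-kn\notin\Delta$ that $a_i+1$ is an $m$-generator and contradict the maximality encoded in $a_-$, whereas the paper applies the contrapositive of Lemma~\ref{first in a sequence is a generator lemma} to $a_i+1$ (with $\alpha=1$) to produce the equivalent contradiction $a_i-n\in\Delta$.
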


\begin{proof}
\begin{enumerate}
\item By construction, $j\ge i$ and $i<kn$. If $i=j,$ then there are no $(kn+1)$-generators in the interval $[a_i+1,a_i+n],$ hence by Corollary \ref{first in a sequence is a generator} $[a_i,a_i+n]\subset \Delta$ and therefore $[a_i,+\infty)\subset \Delta$.
By Lemma \ref{arithmetic progression} there are no $(kn+1)$-generators greater than $a_i.$ Therefore $i=kn.$ Contradiction.
\item Follows from the fact that there is only one edge from each vertex except the root $a_{kn},$ and the previous part.
\item If $a_i-n\in\Delta$ then by Lemma \ref{arithmetic progression}, $a_i-n$ is a $(kn+1)$-generator. But then the tree $T_{\Delta}$ contains the edge $(a_i-n)\rightarrow a_i$ and  $a_i$ is not a leaf. Therefore, every leaf is a $(kn+1,n)$-generator. 

Conversely, suppose that $a_i$ is a $(kn+1,n)$-generator, but not a leaf of $T_{\Delta}.$ Then there exists a $(kn+1)$-generator $a_j$ such that $a_j\rightarrow a_i$ is an edge of $T_{\Delta},$ so $a_j+n>a_i$ and the interval $[a_i+1,a_j+n]$ does not contain any $(kn+1)$-generators. It then follows from  Corollary \ref{first in a sequence is a generator} that the whole interval $[a_i+1,a_j+n]$ is contained in $\Delta.$ Therefore, by Lemma \ref{first in a sequence is a generator lemma} $a_i+1-(n+1)=a_i-n\in \Delta$. Contradiction.
\end{enumerate}
\end{proof}

We will need the following observation about paths in $T_{\Delta}:$
\begin{lemma}\label{paths}
Suppose that $a_{i_0}\rightarrow a_{i_1}\rightarrow\dots\rightarrow a_{i_l}$ is a path in $T_{\Delta}.$ 
\begin{enumerate}
\item The interval $[a_{i_l}+1,a_{i_0}+ln]$ is a subset of $\Delta$ and it does not contain $(kn+1)$-generators.
\item The number of $(kn+1)$-generators in the interval $[a_{i_0}+1,a_{i_0}+ln]$ is equal to $i_l-i_0.$
\item If $a_j$ is a $(kn+1)$-generator and $a_{i_{l-1}}<a_j\le a_{i_l},$ then $N_{i_0j}=l.$
\end{enumerate}
\end{lemma}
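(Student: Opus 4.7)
The plan is to prove all three parts together, with Part 1 being the core statement established by induction on the path length $l$, and Parts 2 and 3 following essentially as combinatorial corollaries of Part 1.

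For Part 1, I would induct on $l$; the case $l=0$ is vacuous. For the inductive step, I would combine two ingredients. First, apply the inductive hypothesis to the subpath $a_{i_0}\to\cdots\to a_{i_{l-1}}$ of length $l-1$, obtaining $[a_{i_{l-1}}+1, a_{i_0}+(l-1)n]\subseteq\Delta$ with no $(kn+1)$-generators; then shift this interval by $n$. Every shifted integer stays in $\Delta$ because $\Delta+n\subseteq\Delta$, and Lemma \ref{arithmetic progression} ensures that a non-generator shifted by $n$ is still a non-generator, so $[a_{i_{l-1}}+n+1,a_{i_0}+ln]\subseteq\Delta$ still contains no $(kn+1)$-generators. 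Second, unpack the edge $a_{i_{l-1}}\to a_{i_l}$: by definition $a_{i_l}=a_{-}(a_{i_{l-1}}+n)$, so Corollary \ref{first in a sequence is a generator} yields $[a_{i_l},a_{i_{l-1}}+n]\subseteq\Delta$, and the maximality of $a_{i_l}$ ensures no $(kn+1)$-generators lie in $(a_{i_l},a_{i_{l-1}}+n]$. The two intervals $[a_{i_l}+1,a_{i_{l-1}}+n]$ and $[a_{i_{l-1}}+n+1,a_{i_0}+ln]$ are adjacent, and their union is exactly $[a_{i_l}+1,a_{i_0}+ln]$. This closes the induction.

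Part 2 is then immediate: since Part 1 excludes $(kn+1)$-generators from $[a_{i_l}+1,a_{i_0}+ln]$, the $(kn+1)$-generators of $\Delta$ lying in $[a_{i_0}+1,a_{i_0}+ln]$ are precisely $a_{i_0+1},\dots,a_{i_l}$, giving the count $i_l-i_0$. For Part 3, I would apply Part 1 to both the path of length $l$ and its subpath of length $l-1$: the latter gives that $a_{i_{l-1}}$ is the largest $(kn+1)$-generator $\leq a_{i_0}+(l-1)n$, so the hypothesis $a_j>a_{i_{l-1}}$ forces $a_j>a_{i_0}+(l-1)n$. On the other side, the chain of edge inequalities $a_{i_s}\leq a_{i_{s-1}}+n$ telescopes to $a_j\leq a_{i_l}\leq a_{i_0}+ln$. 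Together these give $(l-1)n<a_j-a_{i_0}\leq ln$, hence $N_{i_0j}=\lceil(a_j-a_{i_0})/n\rceil=l$.

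The main delicate point is the gluing step in Part 1, where one must carefully justify shifting the inductive interval by exactly $n$ and verify that the shifted interval and the interval coming from the edge $a_{i_{l-1}}\to a_{i_l}$ are adjacent (not overlapping and not leaving a gap). Once the identity $a_{i_l}=a_{-}(a_{i_{l-1}}+n)$ is unpacked via Corollary \ref{first in a sequence is a generator} and the shift is handled via Lemma \ref{arithmetic progression}, the rest is straightforward bookkeeping.
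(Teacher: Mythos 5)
Your proof is correct and follows essentially the same route as the paper: the same inductive decomposition $[a_{i_l}+1,a_{i_0}+ln]=[a_{i_l}+1,a_{i_{l-1}}+n]\cup\bigl([a_{i_{l-1}}+1,a_{i_0}+(l-1)n]+n\bigr)$ for Part 1 (using Lemma \ref{arithmetic progression} to preserve non-generator status under the shift by $n$ and Corollary \ref{first in a sequence is a generator} for the edge piece), and the same squeeze $(l-1)n<a_j-a_{i_0}\le ln$ for Part 3. The only cosmetic difference is starting the induction at $l=0$ rather than $l=1$, which is harmless.
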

\begin{proof}
The proof of the first part is by induction on $l.$ The $l=1$ case is by definition and Corollary \ref{first in a sequence is a generator}. Suppose that we proved the Lemma for $l-1.$ Then one has
$$
[a_{i_l}+1,a_{i_0}+ln]=[a_{i_l}+1, a_{i_{l-1}}+n]\cup ([a_{i_{l-1}}+1,a_{i_0}+(l-1)n]+n)
$$

The interval $[a_{i_l}+1, a_{i_{l-1}}+n]$ does not contain $(kn+1)$-generators by the definition of the edge $a_{l-1}\rightarrow a_l.$ In turn, the interval $([a_{i_{l-1}}+1,a_{i_0}+(l-1)n]+n)$ does not contain $(kn+1)$-generators by the induction assumption and Lemma \ref{arithmetic progression}. Finally, the inclusion $[a_{i_l}+1,a_{i_0}+ln]\subset\Delta$ follows from Corollary \ref{first in a sequence is a generator}.

The second part follows immediately from the first. For the third part, observe that since there are no $(kn+1)$-generators in the interval $[a_{i_{l-1}}+1,a_{i_0}+(l-1)n]$, we get
$$
a_{i_0}+(l-1)n<a_j\le a_{i_l}\le a_{i_0}+ln.
$$
Therefore, $N_{i_0j}=\left\lceil\frac{a_j-a_{i_0}}{n} \right\rceil=l.$
\end{proof}

\begin{theorem}\label{tree reconstruction +1}
One can reconstruct the tree $T_{\Delta}$ from the numbers $g(a_0),\dots, g(a_{kn}).$
\end{theorem}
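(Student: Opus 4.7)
The plan is to show that the edge map $\sigma$ of $T_\Delta$ (where $a_i\to a_{\sigma(i)}$) can be recovered inductively from the sequence $(g(a_0),\dots,g(a_{kn}))$ by processing vertices in order of the level function $l(i):=\lceil a_i/n\rceil$.

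\emph{Base case.} I first determine $\sigma(0)$. Since $a_0=0$, any $x\in(0,n]$ satisfies $x-m=x-kn-1<0\notin\Delta$, so every element of $(0,n]\cap\Delta$ is automatically an $m$-generator. Using $0,n\in\Delta$, one computes $|(0,n]\cap\Delta|=n-g(a_0)$, so
$$\sigma(0)=n-g(a_0).$$
This identifies $a_{i_1}$ and all level-$1$ generators (as indices).

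\emph{Inductive step.} By Lemma \ref{paths}, each edge moves up by exactly one level, so it suffices to determine (i) the partition of $\{0,\dots,kn\}$ into levels and (ii) for each $a_i$ at level $l$, the largest $j$ at level $l+1$ with $a_j\le a_i+n$. Assuming the structure has been recovered through level $l$, the number $n_{l+1}$ of generators at level $l+1$ is obtained by applying Lemma \ref{numgens}(1) to the window $(a_{i_l},a_{i_l}+n]$: the count is the difference $g(a_{i_l}+1-m)-g(a_{i_l}+1)$, and the missing $g$-values at non-generator points are recovered from the positions of previously-placed generators combined with Lemma \ref{arithmetic progression} (propagation of non-$\Delta$ by $+n$) and Lemma \ref{gaps} (explicit length-$1$ gap intervals near each $(kn+1)$-generator). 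For (ii), Lemma \ref{numgens}(2) applied in the same window counts $m$-cogenerators, which in turn detects whether $a_i+n$ falls before, at, or beyond each level-$(l+1)$ generator; this distinguishes $\sigma(i)=i_{l+1}$ from the case $\sigma(i)<i_{l+1}$.

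\emph{Main obstacle.} The core difficulty is that Lemma \ref{numgens} naturally involves $g(x)$ at arbitrary integers $x$, whereas the hypothesis only supplies $g$ at the $(kn+1)$-generators. The resolution is that once the level-$\le l$ generators have been identified as integers, Lemma \ref{gaps} forces the intervals $[a_j-s(kn+1),a_j-skn-1]$ to be gaps for all $s\ge 1$, and Lemma \ref{arithmetic progression} propagates gaps forward by $n$; together these determine $\Delta\cap[a_{i_l}+1-m,a_{i_l}+n]$ precisely, which is all that is needed to evaluate the $g$-differences in the induction. Iterating over $l=1,\dots,L$ yields the full tree.
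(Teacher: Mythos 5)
Your inductive step is built on the claim that every edge of $T_\Delta$ increases the ``level'' $l(i):=\lceil a_i/n\rceil$ by exactly one, which you attribute to Lemma~\ref{paths}. This claim is false. Take $n=3$, $m=7$ (so $k=2$) and the $\Gamma_{3,7}$-semimodule $\Delta=\mathbb Z_{\ge 0}\setminus\{2,5\}=\{0,1,3,4,6,7,8,\ldots\}$. Its $7$-generators are $a_0,\ldots,a_6=0,1,3,4,6,9,12$. Since $a_{-}(a_3+3)=a_{-}(7)=6=a_4$, the tree contains the edge $a_3\to a_4$; yet $\lceil a_3/n\rceil=\lceil 4/3\rceil=2=\lceil 6/3\rceil=\lceil a_4/n\rceil$, so this edge does not change the level. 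Consequently, at this vertex your rule~(ii) looks for the largest $j$ at level $3$ with $a_j\le a_3+n=7$; the only level-$3$ generator is $a_5=9>7$, so the rule returns nothing, whereas the true answer is $\sigma(3)=4$ at level $2$.

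The source of the error is a misreading of Lemma~\ref{paths}. Part~(3) of that lemma controls the quantity $N_{i_0 j}=\lceil (a_j-a_{i_0})/n\rceil$ \emph{relative to the starting vertex} $a_{i_0}$ of a path: along a path from $a_{i_0}$ this relative level increases by one per edge. When $a_{i_0}=a_0=0$ it happens to agree with $\lceil a_j/n\rceil$, so your picture is correct for the path emanating from $a_0$. But for a branch starting at another leaf $a_l$, the relevant quantity is $N_{lj}=\lceil (a_j-a_l)/n\rceil$, which is not $\lceil a_j/n\rceil-\lceil a_l/n\rceil$ in general; in the example $N_{3,4}=\lceil 2/3\rceil=1$ while $\lceil a_4/n\rceil-\lceil a_3/n\rceil=0$. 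There is no global level function on $T_\Delta$ of the kind your algorithm needs, and paths from different leaves to the root can have different lengths. The paper's proof works around exactly this obstruction: it recovers the tree \emph{one root-directed path at a time}, and at each new leaf $a_l$ it recomputes fresh initial data $c^{l}_{-k},\dots,c^{l}_{-1}$ from the already-recovered part of the tree (via the quantities $N_{ij}$, $K_{ij}$, $f(a_i)$). Those initial conditions are precisely the ``phase correction'' between the $a_0$-relative and $a_l$-relative levels that your scheme has no way to produce.
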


\begin{proof}
We will reconstruct $T_{\Delta}$ in the following order. First, we reconstruct the path from $a_0$ to $a_{kn}.$ Then we take the smallest $(kn+1)$-generator $a_l,$ which is not covered yet, and reconstruct the path 
$$
a_l\rightarrow a_{l+b^{l}_0}\rightarrow a_{l+b^{l}_0+b^{l}_1}\rightarrow\dots\rightarrow a_{kn}.
$$
We repeat this procedure until we run out of generators.

On each step we need to find numbers $b^{l}_0,b^{l}_1,b^{l}_2,\dots.$ According to Lemma \ref{paths}, $b^{l}_i$ is equal to the number of $(kn+1)$-generators in the interval $I^{l}_i:=(a_l+in,a_l+(i+1)n]$. We use this to define $b^{l}_i$ for all $i\in \mathbb Z.$

We will also need numbers $c^{l}_i,$ counting $n$-cogenerators in the same intervals $I^{l}_i.$ According to Lemma \ref{numgens}, $b^{l}_i$ is equal to the number of $n$-cogenerators in the interval $J^{l}_i:=(a_l+in-(kn+1),a_l+in].$ Note that 
$$
J^{l}_i=\{a_{l}+in-kn\}\sqcup I^{l}_{i-k}\sqcup\dots \sqcup I^{l}_{i-1}
$$

By construction, $a_l$ is a leaf of the tree $T_{\Delta}.$ By Lemma \ref{tree_lemma}, $a_l$ is a $(kn+1,n)$-generator of $\Delta.$ Therefore, $a_l+in-kn=a_l+n(i-k)$ is an $n$-cogenerator if and only if  $i=k-1.$ We conclude that $b^{l}_i$ can be expressed via $c^{l}_{i-k},\dots,c^{l}_{i-1}$ as follows: 
\begin{equation}
\label{BtoC}
b^{l}_i=
\left\{
\begin{array}{l}
\sum\limits_{j=i-k}^{j=i-1} c^{l}_j, i\neq k-1 \\
1+\sum\limits_{j=i-k}^{j=i-1} c^{l}_j,i=k-1.
\end{array}
\right.
\end{equation}

On the other hand, for $i\ge 0$ one can use Lemmas \ref{numgens} and \ref{paths}  to express $c^{l}_i$ through $b^{l}_0,b^{l}_1,\dots,b^{l}_i$ using numbers $g(a_j):$ 
\begin{equation}
\label{CtoB}
c^{l}_i=g(a_{l+b^{l}_0+b^{l}_1+\dots+b^{l}_{i-1}})-g(a_{l+b^{l}_0+b^{l}_1+\dots+b^{l}_i}),\quad c^{l}_0=g(a_l)-g(a_{l+b^{l}_0}),
\end{equation}
Indeed, $g(a_{l+b^{l}_0+b^{l}_1+\dots+b^{l}_i})$ is equal to the number of integers not in $\Delta$ in the interval $[a_{l+b^{l}_0+b^{l}_1+\dots+b^{l}_i},a_{l+b^{l}_0+b^{l}_1+\dots+b^{l}_i}+n),$ which is the same as in the interval $[a_l+in, a_l+(i+1)n)$ by the first part of Lemma \ref{paths}. 

Equations (\ref{BtoC}) and (\ref{CtoB}) together provide recurrence relations on numbers $b^{l}_i$ and $c^{l}_i.$ To start the recursive algorithm, one needs to find numbers $c^{l}_{-k},\dots, c^{l}_{-1}.$

Recall the numbers $N_{ij}=\left\lceil\frac{a_j-a_i}{n}\right\rceil$ and $K_{ij}=\left\lceil\frac{a_j-a_i}{kn+1}\right\rceil.$ Since we know all edges of the tree $T_{\Delta}$ with initial points less then $a_l,$ we can use Lemma \ref{paths} to find numbers $N_{ij}$ for any $i,j\le l.$ Then, using Corollary \ref{KtoN}, we can find numbers $K_{ij}$ for any $i,j\le l.$ We can also compute
$$
f(a_i):=\sharp\left(\Delta\cap (-\infty,a_i)\right)=\sum_{j<i}K_{ji},
$$
for all $i\le l.$ Indeed, fix a remainder $0\le r<kn+1,$ such that the corresponding $(kn+1)$-generator $a_j\equiv r\ (\mod\  kn+1)$ is less than $a_i.$ Then there are exactly $K_{ji}$ integers in $\Delta\cap (-\infty,a_i)$ with remainder $r$ modulo $kn+1.$  

Note that
\begin{equation}\label{c_-i}
c^{l}_{-i}=\sharp (I^{l}_{-i+1}\cap\Delta)-\sharp(I^{l}_{-i}\cap\Delta)\  \mbox{\rm for all}\  i>0. 
\end{equation}

Since $a_l$ is a $(kn+1,n)$-generator of $\Delta,$ we have $a_l-in\notin\Delta$ for $i>0.$ Consider the smallest element of $\Delta$ bigger than $a_l-in.$ By Corollary \ref{first in a sequence is a generator}, it is a $(kn+1)$-generator $a_{\alpha_i},$ where $\alpha_{i}:=\min\{j:N_{jl}\le i\}.$

Finally, we compute
$$
\sharp (I^{l}_{-i}\cap\Delta)=f(a_{\alpha_{i-1}})-f(a_{\alpha_{i}}) \mbox{\rm for}\ i>1,
$$
$$
\sharp (I^{l}_{-1}\cap\Delta)=f(a_l)-f(a_{\alpha_1})+1,
$$
and the number $\sharp (I^{l}_0\cap\Delta)=n-g(a_l)$ is given. Therefore, we can use Equation \ref{c_-i} to find numbers $c^{l}_{-k},\dots, c^{l}_{-1}.$

\end{proof}

\begin{theorem}\label{tree -> semimod +1}
The tree $T_{\Delta}$ completely determines the semimodule $\Delta.$
\end{theorem}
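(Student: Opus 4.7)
The plan is to observe that by Definition \ref{def tree}, knowing the tree $T_\Delta$ in particular means knowing its vertex set, which is precisely the collection of integer values of the $(kn+1)$-generators $a_0<a_1<\cdots<a_{kn}$ of $\Delta$. So it suffices to prove the purely semigroup-theoretic statement that the list of $(kn+1)$-generators determines $\Delta$; the edge data of $T_\Delta$ plays no role in this final step, although of course it is needed to build the tree in Theorem \ref{tree reconstruction +1}.

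Concretely, I would establish the set-theoretic identity
$$
\Delta \;=\; \bigcup_{i=0}^{kn} \bigl\{a_i + \alpha(kn+1) : \alpha \in \mathbb{Z}_{\ge 0}\bigr\}.
$$
The inclusion $\supseteq$ is immediate: each $a_i$ lies in $\Delta$, and since $kn+1 = m \in \Gamma$, the semimodule axiom $\Delta+\Gamma\subseteq\Delta$ gives $a_i + \alpha(kn+1) \in \Delta$. For the reverse inclusion, given $x\in\Delta$, I would take the largest $\alpha\ge 0$ such that $y := x - \alpha(kn+1) \in \Delta$; such a maximum exists because $\Delta\subseteq\mathbb{Z}_{\ge 0}$ is bounded below. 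By maximality of $\alpha$ we have $y-(kn+1)\notin\Delta$, so $y$ is a $(kn+1)$-generator, i.e.\ $y=a_i$ for some $i$, and therefore $x = a_i + \alpha(kn+1)$ lies in the asserted union.

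As a consistency check, the union is in fact disjoint: if two generators $a_i<a_j$ were congruent modulo $kn+1$, then $a_j-(kn+1) = a_i + \beta(kn+1) \in a_i+\Gamma\subseteq\Delta$ for some $\beta\ge 0$, contradicting $a_j-(kn+1)\notin\Delta$. This also forces the generator count to be exactly $kn+1=m$, matching both the vertex count of $T_\Delta$ and the general count of $m$-generators recalled after the definition of a semimodule. There is no substantive obstacle in the argument; the content of the theorem is just that the reconstruction chain $\bigl(g(a_0),\ldots,g(a_{kn})\bigr)\mapsto T_\Delta \mapsto \Delta$ closes up at the final step, which is ensured by the elementary lemma above.
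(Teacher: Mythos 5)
Your argument rests on a misreading of what it means to ``know the tree $T_\Delta$,'' and this reduces the theorem to a tautology rather than proving it. When Theorem \ref{tree reconstruction +1} reconstructs $T_\Delta$ from the values $g(a_0),\dots,g(a_{kn})$, what it actually recovers is the \emph{abstract} tree: a graph on the index set $\{0,\dots,kn\}$ recording which edges $a_i\to a_j$ exist, \emph{not} the integer values $a_0,\dots,a_{kn}$ themselves. (Read the reconstruction proof: it produces the increments $b^l_i$, i.e.\ which index follows which, and nothing more.) If the reconstruction step had already handed us the integers $a_i$, then Theorem \ref{tree -> semimod +1} would be vacuous --- as you correctly note, $\Delta=\bigcup_i\{a_i+\alpha(kn+1):\alpha\ge 0\}$ is immediate from the definition of an $m$-generator. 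The entire content of the theorem is that the combinatorial tree structure on the indices, together with the fact that $a_0<\cdots<a_{kn}$, suffices to pin down the integers, and that is a genuine claim requiring a genuine argument.

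The paper's proof supplies that argument. From the tree, Lemma \ref{paths} yields $N_{ij}=\lceil(a_j-a_i)/n\rceil$ for all $i<j$, and Corollary \ref{KtoN} converts these to $K_{ij}=\lceil(a_j-a_i)/(kn+1)\rceil$. The numbers $K_{0i}$ place each $a_i$ in a specific window $[(K_{0i}-1)(kn+1),\,K_{0i}(kn+1))$, so it remains only to recover the residues $r_i\equiv a_i\pmod{kn+1}$. The equivalence $r_i<r_j\Leftrightarrow K_{ij}>K_{0j}-K_{0i}$ (for $i<j$) determines the total order of the residues, and since $(r_0,\dots,r_{kn})$ is a permutation of $(0,\dots,kn)$, the order determines the residues and hence the $a_i$. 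Your proposal skips all of this by assuming the generator values are already known, which is exactly what must be proved; you should instead start from the fact that the tree gives you only the $N_{ij}$ and work forward from there.
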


\begin{proof}
Now that we know the whole tree $T_{\Delta},$ we can use Lemma \ref{paths} and Corollary \ref{KtoN} to find numbers $K_{ij}$ and $N_{ij}$ for all $i<j\le kn.$ Note that by the definition of numbers $K_{ij},$ we have
$$
a_i\in [(K_{0i}-1)(nk+1),K_{0i}(nk+1)\ )\ \mbox{\rm for all}\ i.
$$
Therefore, it suffices to recover the remainders $r_0,\dots,r_{kn}$ of the generators $a_0,\dots, a_{kn}$ modulo $kn+1.$ 

We can use numbers $K_{ij}$ recover the order of $r_0,\dots,r_{kn}.$ Indeed, if $i<j$ then
$$
r_i<r_j \ \Leftrightarrow \ K_{ij}>K_{0j}-K_{0i}.
$$
Since the remainders $r_0,\dots,r_{kn}$ run through all numbers $0,1,\dots, kn$ once, knowing the order of $r_0,\dots,r_{kn}$ is equivalent to knowing the remainders themselves.
\end{proof}

Let us illustrate Theorems \ref{tree reconstruction +1} and \ref{tree -> semimod +1} in the following example. 

\subsection{Example: reconstruction of a $\Gamma_{4,9}$-semimodule.}

Let $n=4$ and $m=9$ ($k=2$). Suppose that $g(a_0)=2,\ g(a_1)=g(a_2)=1,$ and $g(a_3)=\dots=g(a_8)=0.$ At the first step we reconstruct the path from $a_0=0$ to $a_8$ in the tree $T_{\Delta}.$ Following the algorithm, we first need to find numbers $c^{0}_{-1}$ and $c^{0}_{-2},$ counting $4$-cogenerators in intervals $(-4,0]$ and $(-8,-4]$ correspondingly. Since there are no elements of $\Delta$ less than $a_0=0,$ we immediately conclude that 
$$
c^{0}_{-2}=\sharp\{\Delta\cap (-4,0]\}-\sharp\{\Delta\cap (-8,-4]\}=1-0=1,
$$ 
and 
$$
c^{0}_{-1}=\sharp\{\Delta\cap (0,4]\}-\sharp\{\Delta\cap (-4,0]\}=(4-g(a_0))-1=4-2-1=1.
$$

Using the recurrence relations (\ref{BtoC}) and (\ref{CtoB}) we immediately compute:

$$
b^0_0=c^{0}_{-2}+c^{0}_{-1}=2,\ c^{0}_0=g(a_0)-g(a_2)=2-1=1,
$$
$$
b^0_1=1+c^{0}_{-1}+c^{0}_0=3,\ c^{0}_1=g(a_2)-g(a_5)=1,
$$
$$
b^0_2=c^{0}_0+c^{0}_1=2,\ c^{0}_2=g(a_5)-g(a_7)=0,
$$
$$
b^0_3=c^{0}_1+c^{0}_2=1,\ c^{0}_3=g(a_7)-g(a_8)=0.
$$

Therefore, the path from $a_0$ to $a_8$ is $a_0\rightarrow a_2\rightarrow a_5\rightarrow a_7\rightarrow a_8.$ The smallest $9$-generator not covered yet is $a_1.$ Therefore, our next step is to recover the path from $a_1$ to $a_8.$

Again, we start by reconstructing numbers $c^{1}_{-1}$ and $c^{1}_{-2}.$ We have
$$
c^{1}_{-2}=\sharp\{\Delta\cap (a_1-4,a_1]\}-\sharp\{\Delta\cap (a_1-8,a_1-4]\},
$$ 
and 
$$
c^{1}_{-1}=\sharp\{\Delta\cap (a_1,a_1+4]\}-\sharp\{\Delta\cap (a_1-4,a_1]\}.
$$
The only element of $\Delta$ less then $a_1$ is $a_0$ and, moreover, $a_0>a_1-4.$ Indeed, $a_0$ is the only $9$-generator less than $a_1$ and $a_1<a_2\le a_0+4,$ because we have the arrow $a_0\rightarrow a_2$ in the tree $T_{\Delta}.$ We conclude that
$$
\sharp\{\Delta\cap (a_1-8,a_1-4]\}=0,
$$
$$
\sharp\{\Delta\cap (a_1-4,a_1]\}=2,
$$
and
$$
\sharp\{\Delta\cap (a_1,a_1+4]\}=4-g(a_1)=3.
$$

Therefore,
$$
c^{1}_{-2}=2,\ \mbox{and}\ c^{1}_{-1}=1.
$$

We again use the recurrence relations (\ref{BtoC}) and (\ref{CtoB}):

$$
b^{1}_0=c^{1}_{-2}+c^{1}_{-1}=3,\ c^{1}_0=g(a_1)-g(a_4)=1,
$$
$$
b^{1}_1=1+c^{1}_{-1}+c^{1}_0=3,\ c^{1}_1=g(a_4)-g(a_7)=0,
$$
$$
b^{1}_2=c^{1}_0+c^{1}_1=1,\ c^{1}_2=g(a_7)-g(a_8)=0.
$$

Therefore, the path from $a_1$ to $a_8$ is $a_1\rightarrow a_4\rightarrow a_7\rightarrow a_8.$ The smallest $9$-generator not covered yet is $a_3.$ Therefore, our next step is to recover the path from $a_3$ to $a_8.$

Similarly to above, we need to reconstruct numbers $c^{3}_{-2}$ and $c^{3}_{-1}$ first: 
$$
c^{3}_{-2}=\sharp\{\Delta\cap (a_3-4,a_3]\}-\sharp\{\Delta\cap (a_3-8,a_3-4]\},
$$ 
and 
$$
c^{3}_{-1}=\sharp\{\Delta\cap (a_3,a_3+4]\}-\sharp\{\Delta\cap (a_3-4,a_3]\}.
$$
There are three $9$-generators less than $a_3:$ $a_0,a_1,$ and $a_2.$ Since we have the path $a_0\rightarrow a_2\rightarrow a_5$ in the tree $T_{\Delta},$ we conclude that $a_0+8\ge a_5>a_3.$ Therefore, the only $3$ elements of $\Delta$ less than $a_3$ are the $9$-generators. Furthermore, from the reconstructed part of the tree we see that $a_3-8<a_0<a_3-4,$ $a_3-4<a_1<a_3,$ and $a_3-4<a_2<a_3.$ Therefore,

$$
\sharp\{\Delta\cap (a_3-8,a_3-4]\}=1,
$$
$$
\sharp\{\Delta\cap (a_3-4,a_3]\}=3,
$$
and
$$
\sharp\{\Delta\cap (a_3,a_3+4]\}=4-g(a_3)=4.
$$

We conclude that
$$
c^{3}_{-2}=2,\ \mbox{and}\ c^{3}_{-1}=1.
$$

Once again, we use the recurrence relations (\ref{BtoC}) and (\ref{CtoB}):

$$
b^{3}_0=c^{3}_{-2}+c^{3}_{-1}=3,\ c^{3}_0=g(a_3)-g(a_6)=0,
$$
$$
b^{3}_1=1+c^{3}_{-1}+c^{3}_0=2,\ c^{3}_1=g(a_6)-g(a_8)=0.
$$

Therefore, the path from $a_3$ to $a_8$ is $a_3\rightarrow a_6\rightarrow a_8.$ See Figure \ref{tree_rec_example} for the full tree $T_{\Delta}.$

\begin{figure}
\centering
\begin{tikzpicture}
\draw (0,4) node {$a_0$};
\draw (1,3) node {$a_2$};
\draw (2,2) node {$a_5$};
\draw (3,1) node {$a_7$};
\draw (4,0) node {$a_8$};
\draw (4,2) node {$a_4$};
\draw (5,3) node {$a_1$};
\draw (5,1) node {$a_6$};
\draw (6,2) node {$a_3$};
\draw [->,>=stealth] (0.2,3.8)--(0.8,3.2);
\draw [->,>=stealth] (1.2,2.8)--(1.8,2.2);
\draw [->,>=stealth] (2.2,1.8)--(2.8,1.2);
\draw [->,>=stealth] (3.2,0.8)--(3.8,0.2);
\draw [->,>=stealth] (4.8,2.8)--(4.2,2.2);
\draw [->,>=stealth] (3.8,1.8)--(3.2,1.2);
\draw [->,>=stealth] (5.8,1.8)--(5.2,1.2);
\draw [->,>=stealth] (4.8,0.8)--(4.2,0.2);
\end{tikzpicture}
\caption{The tree $T_{\Delta}$ for the case $n=4,\ m=9,\ g(a_0)=2,\ g(a_1)=g(a_2)=1,$ and $g(a_3)=\dots=g(a_8)=0.$}\label{tree_rec_example}
\end{figure}
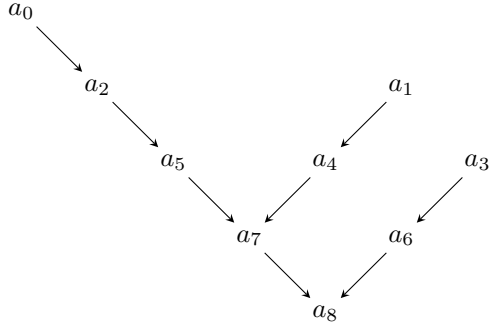

Finally, we reconstruct the semimodule $\Delta$ from the tree $T_{\Delta}$ following the algorithm from the Theorem \ref{tree -> semimod +1}. First, we use Lemma \ref{paths} to find numbers $N_{0,i}$ for $i=1,2,\dots, 8$ and Corollary \ref{KtoN} to find $K_{0,i}.$ We get

$$
N_{0,1}=N_{0,2}=1,\ N_{0,3}=N_{0,4}=N_{0,5}=2,\ N_{0,6}=N_{0,7}=3,\ N_{0,8}=4,
$$
and
$$
K_{0,1}=K_{0,2}=K_{0,3}=K_{0,4}=K_{0,5}=1,\ K_{0,6}=K_{0,7}=K_{0,8}=2.
$$

Therefore, we get that $a_1,a_2,a_3,a_4,$ and $a_5$ are in the interval $(0,9),$ while $a_6,a_7,a_8$ are in the interval $(9,18).$

Now we need to compare the remainders $r_1,\dots, r_8$ of the $9$-generators $a_1,\dots,a_8.$ We already know that $r_1<r_2<r_3<r_4<r_5$ and $r_6<r_7<r_8.$ Let us compare $r_1$ and $r_6.$ By Lemma \ref{paths} and the tree $T_{\Delta},$ we get $N_{1,6}=2.$ Therefore, by Corollary \ref{KtoN}, $K_{1,6}=1,$ which means that $a_6-a_1<9.$ Therefore, $r_6=a_6-9<a_1=r_1.$

Similarly, one computes that $r_7<r_1$ and $r_2<r_8<r_3.$ Therefore, 
$$
r_6<r_7<r_1<r_2<r_8<r_3<r_4<r_5,
$$ 
or 
$$
r_6=1,\ r_7=2,\ r_1=3,\ r_2=4,\ r_8=5,\ r_3=6,\ r_4=7,\ r_5=8.
$$
Finally,
$$
a_0=0,\ a_1=r_1=3,\ a_2=r_2=4,\ a_3=r_3=6,\ a_4=r_4=7,\ a_5=r_5=8,
$$
and
$$
a_6=r_6+9=10,\ a_7=r_7+9=11,\ a_8=r_8+9=14.
$$

\subsection{The case $m=kn-1.$}
\label{SS kn-1}

Let now $m=kn-1.$ In this case the semimodule $\Delta$ can be reconstructed from numbers $g(a_0),\dots, g(a_{kn-2})$ in a way similar to the case $m=kn+1.$ However, some adjustments are required. We will omit some of the proofs, in the cases when they are identical to the $m=kn+1$ case.

\begin{lemma}\label{last in a sequence is a generator lemma}
Suppose that $x\in\Delta$ and $x-\alpha n+1\notin\Delta$ for some $\alpha\in\{0,\dots,k\}$. Then $x$ is a $(kn-1)$-generator.
\end{lemma}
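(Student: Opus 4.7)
The plan is to mimic, essentially verbatim, the proof of Lemma \ref{first in a sequence is a generator lemma}, with the single change that the role of $kn+1$ is played by $kn-1$, which flips the sign of the ``$\pm 1$'' in the arithmetic. The key observation is that $\Delta$ is a $\Gamma_{m,n}$-semimodule, where now $\Gamma_{m,n}$ is generated by $n$ and $m=kn-1$, so in particular $(k-\alpha)n\in\Gamma_{m,n}$ for any $\alpha\in\{0,\ldots,k\}$, and translation by an element of $\Gamma$ preserves $\Delta$.

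The argument will be by contradiction. Suppose $x\in\Delta$ is not an $m$-generator. Then by the definition of an $m$-generator, $x-m=x-kn+1\in\Delta$. Since $\Delta+\Gamma\subset\Delta$ and $(k-\alpha)n\in\Gamma$, we obtain
$$
x-\alpha n+1=(x-kn+1)+(k-\alpha)n\in\Delta,
$$
contradicting the hypothesis $x-\alpha n+1\notin\Delta$. Therefore $x$ must be an $m$-generator.

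There is no genuine obstacle here; the only thing to verify is that the identity $(x-kn+1)+(k-\alpha)n=x-\alpha n+1$ holds and that $(k-\alpha)n$ is a nonnegative combination of the semigroup generators (which it is, since $0\le k-\alpha\le k$). The companion statement that replaces Corollary \ref{first in a sequence is a generator}, namely $[x,a_+(x)]\subset\Delta$ for the appropriate definition of $a_+$ in the $m=kn-1$ setting, will then follow from the $\alpha=0$ case in the same way as before, and will be used later in Section \ref{SS kn-1} to build the bounce tree.
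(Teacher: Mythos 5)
Your proof is correct and matches the paper's argument verbatim: assume $x$ is not an $m$-generator, so $x-kn+1\in\Delta$, then add $(k-\alpha)n\in\Gamma$ to reach the contradiction $x-\alpha n+1\in\Delta$. Nothing further to add.
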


\begin{proof}
Indeed, if $x$ is not a $(kn-1)$-generator, than $x-kn+1\in\Delta.$ But then $(x-kn+1)+(k-\alpha)n=x-\alpha n+1\in\Delta.$ Contradiction. 
\end{proof}

\begin{definition}
For $x\in \Delta$ we define $a_{+}(x)$ to be the minimal $(kn-1)$-generator greater than or equal to $x$. If there is no $(kn-1)$-generators greater or equal to $x,$ we set $a_+(x)=\infty.$
\end{definition}

\begin{corollary}\label{last in a sequence is a generator}
For any $x\in \Delta$ one has $[x,a_{+}(x)]\subset \Delta$.
\end{corollary}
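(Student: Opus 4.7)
The plan is to derive this corollary as a direct application of Lemma \ref{last in a sequence is a generator lemma} in the special case $\alpha = 0$, following the same template that yielded Corollary \ref{first in a sequence is a generator} from Lemma \ref{first in a sequence is a generator lemma}. Taking the contrapositive of Lemma \ref{last in a sequence is a generator lemma} with $\alpha = 0$ gives the following useful reformulation: if $y \in \Delta$ and $y$ is not a $(kn-1)$-generator, then $y + 1 \in \Delta$. This is the exact analogue for $m = kn-1$ of the statement "if $x \in \Delta$ is not an $m$-generator then $x+1 \in \Delta$" used in the $m=kn+1$ case.

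With this reformulation in hand, I would prove the corollary by induction on integers $y$ in the interval $[x, a_+(x)]$, showing $y \in \Delta$. The base case $y = x$ holds by hypothesis. For the inductive step, suppose $y \in \Delta$ with $x \le y < a_+(x)$. Then $y$ cannot be a $(kn-1)$-generator, because $a_+(x)$ is by definition the \emph{minimal} $(kn-1)$-generator satisfying $a_+(x) \ge x$, and $y$ lies strictly below it while still satisfying $y \ge x$. The contrapositive of Lemma \ref{last in a sequence is a generator lemma} (with $\alpha = 0$) therefore yields $y + 1 \in \Delta$, completing the induction. Since $a_+(x) \in \Delta$ automatically (being a generator), we obtain $[x, a_+(x)] \subset \Delta$.

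The case $a_+(x) = \infty$ is handled by the same argument run indefinitely: if there are no $(kn-1)$-generators in $[x, \infty)$, then every $y \ge x$ lying in $\Delta$ is vacuously not a $(kn-1)$-generator, so the inductive step produces $y+1 \in \Delta$ forever, giving $[x, \infty) \subset \Delta$.

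There is no substantive obstacle here — the entire content of the corollary is packaged into Lemma \ref{last in a sequence is a generator lemma}, and the proof is just an induction that propagates membership in $\Delta$ one step at a time. The only thing to be slightly careful about is noting that the $\alpha = 0$ specialization is exactly what drives the induction, and that the hypothesis $y < a_+(x)$ is precisely what guarantees $y$ is not a generator, which is the premise needed to invoke the lemma.
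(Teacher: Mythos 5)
Your proof is correct and takes essentially the same approach as the paper: the paper's entire proof is the one-line observation that the statement follows from Lemma \ref{last in a sequence is a generator lemma} with $\alpha=0$, and you have simply spelled out the induction that this observation implicitly relies on.
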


\begin{proof}
Follows from Lemma \ref{last in a sequence is a generator lemma} with $\alpha=0.$
\end{proof}

\begin{lemma}\label{gaps-1}
Consider a $(kn-1)$-generator $a_i.$ Then for any $l>0,$ the interval $J_l:=[a_i-lkn+1,a_i-l(kn-1)]$ has empty intersection with $\Delta.$
\end{lemma}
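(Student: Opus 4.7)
The plan is to adapt the argument of Lemma \ref{gaps} to the sign-flipped setting, proceeding by induction on $l$. The base case $l=1$ gives $J_1 = \{a_i - (kn-1)\} = \{a_i - m\}$, which lies outside $\Delta$ by the very definition of an $m$-generator.

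For the inductive step, the key observation I would establish is the decomposition
$$
J_{l+1} \;=\; (J_l - kn) \;\cup\; \{a_i - (l+1)(kn-1)\},
$$
which follows from a short endpoint calculation using $(l+1)(kn-1) = lkn + (kn-1) - l$: the lower endpoints of $J_l-kn$ and $J_{l+1}$ coincide, while the upper endpoint of $J_l - kn$ is exactly one less than $a_i - (l+1)(kn-1)$. I would highlight the asymmetry with the $m = kn+1$ case here: in that case the extra integer was attached at the bottom of $J_l - kn$, while here it attaches at the top, reflecting $m < kn$ rather than $m > kn$.

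Assuming $J_l \cap \Delta = \emptyset$, I would then suppose for contradiction that some $y \in J_{l+1}$ lies in $\Delta$, and split into two cases. If $y \in J_l - kn$, then $y + kn \in \Delta$ (since $kn = k\cdot n \in \Gamma$ and $\Delta$ is a $\Gamma$-semimodule), and $y + kn \in J_l$, contradicting the inductive hypothesis. If instead $y = a_i - (l+1)(kn-1) = a_i - (l+1)m$, then $y + m = a_i - l(kn-1) \in \Delta$, but this is the upper endpoint of $J_l$, again a contradiction.

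The only delicate point I anticipate is pinning down the shift identity correctly, in particular keeping track of which end of $J_{l+1}$ carries the single ``new'' element; after that, the induction is routine and uses no new ideas beyond those in Lemma \ref{gaps}.
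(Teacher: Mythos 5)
Your proof is correct and follows the same inductive strategy as the paper, built on the identical decomposition $J_{l+1}=\{a_i-(l+1)(kn-1)\}\cup(J_l-kn)$; you simply spell out the base case and the two-case inductive step that the paper leaves implicit.
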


\begin{proof}
The proof is by induction on $l.$ The case $l=1$ is clear. Then observe that 
$$
J_{l+1}=\{a_i-(l+1)(kn-1)\}\cup (J_l-kn). 
$$ 
\end{proof}

We illustrate Lemma \ref{gaps-1} with the Figure \ref{gapspic-1}.

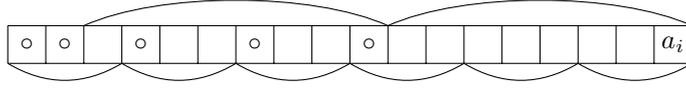
\begin{figure}
\centering

\begin{tikzpicture}[scale=0.5]
\draw (0,0) grid +(18,1);

\draw (0.5,0.5) node {$\circ$};
\draw (1.5,0.5) node {$\circ$};
\draw (3.5,0.5) node {$\circ$};
\draw (6.5,0.5) node {$\circ$};
\draw (9.5,0.5) node {$\circ$};
\draw (17.5,0.5) node {$a_i$};

\draw (0,0) .. controls (1,-0.6) and (2,-0.6) .. (3,0);
\draw (3,0) .. controls (4,-0.6) and (5,-0.6) .. (6,0);
\draw (6,0) .. controls (7,-0.6) and (8,-0.6) .. (9,0);
\draw (9,0) .. controls (10,-0.6) and (11,-0.6) .. (12,0);
\draw (12,0) .. controls (13,-0.6) and (14,-0.6) .. (15,0);
\draw (15,0) .. controls (16,-0.6) and (17,-0.6) .. (18,0);

\draw (2,1) .. controls (4,1.9) and (8,1.9) .. (10,1);
\draw (10,1) .. controls (12,1.9) and (16,1.9) .. (18,1);

\end{tikzpicture}
\caption{In this picture $n=3,$ $k=3,$ $a_i$ is an $8$-generator, and "$\circ$"\ indicates an integer not in $\Delta.$ Note that the interval $[a_i-2kn+1,a_i-2(kn-1)]=[a_i-17,a_i-16]$ has empty intersection with $\Delta.$}
\label{gapspic-1}
\end{figure}

\begin{definition}
We introduce the following notations:
$$
N_{ij}=\left\lfloor\frac{a_j-a_i}{n} \right\rfloor+1\mbox{ and }
K_{ij}=\left\lfloor\frac{a_j-a_i}{kn-1} \right\rfloor+1.
$$
\end{definition}

\begin{corollary}\label{KtoN-1}
We have the following formula:
$
K_{ij}=\left\lceil\frac{N_{ij}}{k} \right\rceil.
$
\end{corollary}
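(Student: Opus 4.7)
The plan is to mirror the proof of Corollary \ref{KtoN} from the $m = kn+1$ case. First, set $d := a_j - a_i$ and rewrite $\lceil N_{ij}/k\rceil$ in a form directly comparable with $K_{ij}$. Writing $\lfloor d/n \rfloor = k q' + s$ with $0 \le s < k$, one has $N_{ij} = k q' + s + 1$ with $1 \le s+1 \le k$, hence
$$
\left\lceil \tfrac{N_{ij}}{k} \right\rceil = q' + 1 = \left\lfloor \tfrac{d}{kn} \right\rfloor + 1,
$$
while by definition $K_{ij} = \lfloor d/(kn-1) \rfloor + 1$.

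Next, I would compare the two step functions $\varphi(d) = \lfloor d/(kn-1)\rfloor + 1$ and $\psi(d) = \lfloor d/(kn)\rfloor + 1$ on non-negative integers. Since $kn - 1 < kn$, one has $\psi(d) \le \varphi(d)$. The function $\varphi$ jumps up by $1$ at each positive multiple $d = l(kn-1)$, and $\psi$ at each $d = lkn$. A direct inspection then shows that $\varphi(d) = \psi(d)$ unless $d \in [l(kn-1), lkn-1]$ for some integer $l \ge 1$, in which case $\varphi(d) = \psi(d) + 1$.

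To finish, suppose for contradiction that $K_{ij} \ne \lceil N_{ij}/k \rceil$. By the preceding paragraph, $d = a_j - a_i$ lies in $[l(kn-1), lkn - 1]$ for some $l \ge 1$, which is equivalent to $a_i \in [a_j - lkn + 1, a_j - l(kn-1)]$. Applying Lemma \ref{gaps-1} to the $m$-generator $a_j$ yields that this interval has empty intersection with $\Delta$, contradicting $a_i \in \Delta$.

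The only nontrivial step is the combinatorial identification of the ``disagreement interval'' for $\varphi$ and $\psi$ and matching it with the gap interval supplied by Lemma \ref{gaps-1}; once these intervals are seen to coincide, the proof is a routine transcription of the argument for Corollary \ref{KtoN}, with floors and strict/non-strict inequalities adjusted to reflect the $-1$ versus $+1$ shift.
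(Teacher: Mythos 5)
Your proof is correct and follows the same route as the paper: rewrite $\lceil N_{ij}/k\rceil$ as $\lfloor (a_j-a_i)/(kn)\rfloor + 1$, observe that any disagreement with $K_{ij}=\lfloor (a_j-a_i)/(kn-1)\rfloor+1$ forces $a_i$ into the interval $[a_j-lkn+1,\,a_j-l(kn-1)]$ for some $l\ge 1$, and then contradict Lemma \ref{gaps-1}. The only difference is that you supply the details of the ceiling/floor rearrangement, which the paper asserts without elaboration.
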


\begin{proof}
Note that 
$$
\left\lceil\frac{N_{ij}}{k} \right\rceil=\left\lceil\frac{\left\lfloor\frac{a_j-a_i}{n} \right\rfloor+1}{k} \right\rceil=\left\lfloor\frac{a_j-a_i}{kn} \right\rfloor+1.
$$ 
Suppose that  $K_{ij}\neq\left\lceil\frac{N_{ij}}{k} \right\rceil.$ Then there exists $l,$ such that $a_j-a_i\ge l(kn-1)$ and $a_j-a_i<lkn,$ which is equivalent to $a_i\in (a_j-lkn,a_j-l(kn-1)].$ Therefore, by Lemma \ref{gaps-1} $a_i\notin \Delta.$ Contradiction.
\end{proof}


\begin{definition}
\label{def tree-1}
The oriented graph $T_{\Delta}$ is defined as follows. The vertices are the $(kn-1)$-generators of $\Delta$ plus one extra vertex $a_{\infty}:$\  $V_{\Delta}=\{a_0,\dots,a_{kn-2},a_{\infty}\}.$ Let $a_i, a_j\in V_{\Delta}$ be $(kn-1)$-generators.  We draw an edge $a_i\rightarrow a_j$, if
$$
a_j=a_{+}(a_i+n).
$$
In addition, we draw edges $a_i\rightarrow a_{\infty}$ for every $a_i,$ such that there is no $(kn-1)$-generators greater than or equal to $a_i+n.$ See Figure \ref{tree-1} for an example of a tree $T_{\Delta}.$
\end{definition}

\begin{figure}
\centering
\begin{tikzpicture}
\draw (0,4) node {$0$};
\draw (1,3) node {$3$};
\draw (2,2) node {$6$};
\draw (3,1) node {$9$};
\draw (4,0) node {$13$};
\draw (4,-1) node {$\infty$};
\draw (5,1) node {$10$};
\draw (6,2) node {$7$};
\draw (7,3) node {$4$};

\draw [->,>=stealth] (0.2,3.8)--(0.8,3.2);
\draw [->,>=stealth] (1.2,2.8)--(1.8,2.2);
\draw [->,>=stealth] (2.2,1.8)--(2.8,1.2);
\draw [->,>=stealth] (3.2,0.8)--(3.8,0.2);

\draw [->,>=stealth] (6.8,2.8)--(6.2,2.2);
\draw [->,>=stealth] (5.8,1.8)--(5.2,1.2);
\draw [->,>=stealth] (4.8,0.8)--(4.2,0.2);

\draw [->,>=stealth] (4,-0.2)--(4,-0.8);
\end{tikzpicture}
\caption{The $8$-generators of the $\Gamma_{3,8}$-semimodule $\Delta:=\{0,3,4,6,7,8,\dots\}$ are equal to $0,3,4,6,7,9,10,13.$ The tree $T_{\Delta}$ is presented in the diagram.}
\label{tree-1}
\end{figure}
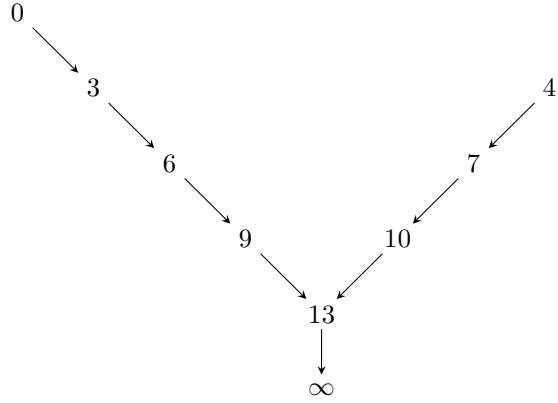

\begin{lemma}\label{tree_lemma-1} The graph $T_{\Delta}$ satisfies the following properties: 

\begin{enumerate}
\item If $a_i\rightarrow a_j$ is an edge, then $i<j$. 

\item The graph $T_{\Delta}$ is a tree with the root $a_{\infty}$, and all edges are oriented towards $a_{\infty}$.  
 
\item The leaves of $T_{\Delta}$ are the $(kn-1,n)$-generators of $\Delta$.
\end{enumerate}
\end{lemma}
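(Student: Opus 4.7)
The plan is to adapt the proof of Lemma \ref{tree_lemma} to the $m=kn-1$ setting, swapping $a_-$ for $a_+$ and reversing the relevant inequalities. I first observe that Lemma \ref{arithmetic progression} holds verbatim for $m=kn-1$, since the proof only uses that $n\in\Gamma$. Part (1) is then immediate: for any edge $a_i\rightarrow a_j$ with $a_j$ finite, the definition gives $a_j=a_+(a_i+n)\ge a_i+n>a_i$, so $j>i$; edges $a_i\rightarrow a_\infty$ trivially raise the index.

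For part (2), by construction every finite vertex $a_i$ has exactly one outgoing edge (to $a_+(a_i+n)$ if it is finite, and to $a_\infty$ otherwise), while $a_\infty$ has none. Combined with part (1), following outgoing edges produces a strictly increasing sequence of indices, which must terminate at $a_\infty$ after finitely many steps; no cycles can exist, and every finite vertex is joined to $a_\infty$ by a unique oriented path. Hence $T_\Delta$ is a tree rooted at $a_\infty$ with all edges oriented towards the root.

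Part (3) splits into two directions. For the implication leaf $\Rightarrow$ $(kn-1,n)$-generator, suppose $a_i$ is a leaf with $a_i-n\in\Delta$; the contrapositive of Lemma \ref{arithmetic progression} applied at $x=a_i-n$ (using that $x+n=a_i$ is a $(kn-1)$-generator) forces $a_i-n$ itself to be a $(kn-1)$-generator $a_\ell$, and then $a_+(a_\ell+n)=a_i$ produces an edge $a_\ell\rightarrow a_i$, contradicting leafness. For the converse, assume $a_i$ is a $(kn-1,n)$-generator but receives an edge $a_j\rightarrow a_i$; then $a_i\ge a_j+n$, and the equality case $a_i=a_j+n$ immediately gives $a_i-n=a_j\in\Delta$, contradicting the $n$-generator property. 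Otherwise $a_j+n<a_i$, and Corollary \ref{last in a sequence is a generator} gives $[a_j+n,a_i]\subset\Delta$, while the definition of the edge forbids any $(kn-1)$-generator in $[a_j+n,a_i)$. In particular $a_i-1\in\Delta$ is not a $(kn-1)$-generator, so the contrapositive of Lemma \ref{last in a sequence is a generator lemma} applied with $x=a_i-1$ and $\alpha=1$ yields $(a_i-1)-n+1=a_i-n\in\Delta$, the desired contradiction.

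I expect the main subtlety to lie in the last step of part (3): one must apply Lemma \ref{last in a sequence is a generator lemma} at the shifted integer $x=a_i-1$ with exactly $\alpha=1$, so that the $-\alpha n+1$ offset in its hypothesis cancels the auxiliary $-1$ shift and produces $a_i-n\in\Delta$. All remaining ingredients are direct translations of the $m=kn+1$ argument, modulo the bookkeeping of switching from $a_-$ to $a_+$ and from ``largest $\le$'' to ``smallest $\ge$''.
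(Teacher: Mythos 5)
Your proof is correct and follows essentially the same line of reasoning as the paper's own argument: part (3) is established via Lemma \ref{arithmetic progression} in one direction and via Corollary \ref{last in a sequence is a generator} together with Lemma \ref{last in a sequence is a generator lemma} (contrapositive, with $x=a_i-1$, $\alpha=1$) in the other. You are slightly more explicit than the paper in two places — noting that Lemma \ref{arithmetic progression} carries over unchanged to $m=kn-1$, and ruling out the equality case $a_i=a_j+n$ in the converse direction before applying the corollary — but these are minor expository additions, not a different route.
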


\begin{proof}
The first two parts are immediate from the construction.  For the third part, consider a $(kn-1)$-generator $a_i.$ If $a_i-n\in\Delta$ then by Lemma \ref{arithmetic progression} $a_i-n$ is a $(kn-1)$-generator. But then the tree $T_{\Delta}$ contains the edge $(a_i-n)\rightarrow a_i$ and  $a_i$ is not a leaf. Therefore, every leaf is a $(kn-1,n)$-generator. 

Conversely, suppose that $a_i$ is a $(kn-1,n)$-generator, but not a leaf of $T_{\Delta}.$ Then there exists a $(kn-1)$-generator $a_j$ such that $a_j\rightarrow a_i$ is an edge of $T_{\Delta},$ so $a_j+n<a_i$ and the interval $[a_j+n,a_i-1]$ does not contain any $(kn-1)$-generators. It then follows from  Corollary \ref{last in a sequence is a generator} that the whole interval $[a_j+n,a_i-1]$ is contained in $\Delta.$ Therefore by Lemma \ref{last in a sequence is a generator lemma} $a_i-1-(n-1)=a_i-n\in \Delta$. Contradiction.
\end{proof}

We will also need the following observation about paths in $T_{\Delta}:$

\begin{lemma}\label{paths-1}
Suppose $a_{i_0}\rightarrow a_{i_1}\rightarrow\dots\rightarrow a_{i_l}$ is a path in $T_{\Delta}.$ 
\begin{enumerate}
\item The interval $[a_{i_0}+ln,a_{i_l}-1]$ is a subset of $\Delta$ and it does not contain $(kn-1)$-generators.
\item The number of $(kn-1)$-generators in the interval $[a_{i_0},a_{i_0}+ln-1]$ is equal to $i_l-i_0.$
\item If $a_j$ is a $(kn-1)$-generator and $a_{i_{l-1}}\le a_j< a_{i_l},$ then $N_{i_0j}=l.$
\end{enumerate}
\end{lemma}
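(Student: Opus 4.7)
The plan is to mirror the proof of Lemma \ref{paths}, reversing the direction of the inequalities to reflect that we now use $a_+$ in place of $a_-$ and Corollary \ref{last in a sequence is a generator} in place of Corollary \ref{first in a sequence is a generator}. I would prove part (1) by induction on $l$, and then derive parts (2) and (3) as easy consequences.

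For the base case $l=1$ of part (1), the edge $a_{i_0}\to a_{i_1}$ means by Definition \ref{def tree-1} that $a_{i_1}=a_+(a_{i_0}+n)$. Since $a_{i_0}\in\Delta$ implies $a_{i_0}+n\in\Delta$, Corollary \ref{last in a sequence is a generator} gives $[a_{i_0}+n,a_{i_1}]\subset\Delta$, and the minimality in the definition of $a_+$ ensures that $[a_{i_0}+n,a_{i_1}-1]$ contains no $(kn-1)$-generators. For the inductive step, I would write
$$[a_{i_0}+ln,\, a_{i_l}-1] \;=\; \bigl([a_{i_0}+(l-1)n,\, a_{i_{l-1}}-1]+n\bigr)\;\cup\;[a_{i_{l-1}}+n,\, a_{i_l}-1],$$
which is a valid decomposition because iterating $a_+(x)\ge x$ along the path yields $a_{i_{l-1}}\ge a_{i_0}+(l-1)n$, so $a_{i_{l-1}}+n-1$ sits inside the big interval. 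The first piece is the translate by $n$ of the interval handled by the inductive hypothesis; it remains inside $\Delta$ since $\Delta+n\subset\Delta$, and it remains free of $(kn-1)$-generators by Lemma \ref{arithmetic progression}. The second piece is treated exactly as in the base case using the edge $a_{i_{l-1}}\to a_{i_l}$.

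Part (2) then follows immediately: since the $(kn-1)$-generators are listed in increasing order, those lying in $[a_{i_0},a_{i_l}-1]$ are exactly $a_{i_0},a_{i_0+1},\dots,a_{i_l-1}$, totalling $i_l-i_0$, and part (1) shows that none of them can lie in the upper subinterval $[a_{i_0}+ln,a_{i_l}-1]$. For part (3), applying part (1) to the sub-path of length $l-1$ forces every $(kn-1)$-generator $\ge a_{i_{l-1}}$ to satisfy $a_j\ge a_{i_0}+(l-1)n$, while applying part (1) to the full path of length $l$ forces any $(kn-1)$-generator $a_j<a_{i_l}$ to satisfy $a_j<a_{i_0}+ln$. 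Combining gives $(l-1)n\le a_j-a_{i_0}<ln$, hence $\lfloor (a_j-a_{i_0})/n\rfloor+1 = l = N_{i_0 j}$.

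I do not anticipate any serious obstacle. The only point requiring care is bookkeeping on the interval endpoints, since $a_+$ produces an inclusive upper endpoint whereas $a_-$ produced an inclusive lower endpoint; this is why the ``gap'' interval $[a_{i_0}+ln, a_{i_l}-1]$ now sits to the \emph{right} of the path's final vertex rather than to the left of its first, and why the counting interval in part (2) is $[a_{i_0},a_{i_0}+ln-1]$ rather than $[a_{i_0}+1,a_{i_0}+ln]$.
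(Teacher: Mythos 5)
Your proof is correct and follows essentially the same route as the paper's: part (1) by induction on $l$ using the identical decomposition of $[a_{i_0}+ln,a_{i_l}-1]$ into a shifted copy of the inductive interval plus the base-case interval, with Lemma \ref{arithmetic progression} and Corollary \ref{last in a sequence is a generator} supplying the key facts, and parts (2)--(3) as consequences. Your added justification that the decomposition is nondegenerate (via $a_{i_{l-1}}\ge a_{i_0}+(l-1)n$) and your slightly more explicit derivation of the chain $(l-1)n\le a_j-a_{i_0}<ln$ in part (3) are welcome clarifications of points the paper leaves implicit, but they do not change the argument.
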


\begin{proof}
The proof of the first part is by induction on $l.$ The $l=1$ case is by definition and Corollary \ref{last in a sequence is a generator}. Suppose that we proved the Lemma for $l-1.$ Then one has
$$
[a_{i_0}+ln,a_{i_l}-1]= ([a_{i_0}+(l-1)n,a_{i_{l-1}}-1]+n)\cup [a_{i_{l-1}}+n,a_{i_l}-1]
$$

The interval $[a_{i_{l-1}}+n,a_{i_l}-1]$ does not contain $(kn-1)$-generators by the definition of the edge $a_{l-1}\rightarrow a_l.$ In turn, the interval $ ([a_{i_0}+(l-1)n,a_{i_{l-1}}-1]+n)$ does not contain $(kn-1)$-generators by the induction assumption and Lemma \ref{arithmetic progression}. Finally, the inclusion $[a_{i_0}+ln,a_{i_l}-1]\subset\Delta$ follows from Corollary \ref{last in a sequence is a generator}.

The second part follows immediately from the first. For the third part, observe that since there are no $(kn-1)$-generators in the interval $[a_{i_0}+ln, a_{i_l}-1]$, we get
$$
a_{i_0}+(l-1)n\le a_{i_{l-1}}\le a_j< a_{i_0}+ln.
$$
Therefore, $N_{i_0j}=\left\lfloor\frac{a_j-a_{i_0}}{n} \right\rfloor+1=l-1+1=l.$
\end{proof}

\begin{theorem}
One can reconstruct $T_{\Delta}$ from numbers $g(a_0),\dots, g(a_{kn-2}).$
\end{theorem}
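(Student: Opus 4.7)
The plan is to adapt, almost verbatim, the proof of Theorem~\ref{tree reconstruction +1}: we reconstruct $T_\Delta$ path by path, starting from the path containing $a_0$ and then, at each subsequent stage, rebuilding the path from the smallest $(kn-1)$-generator $a_l$ that has not yet been covered. Minimality forces any such $a_l$ to be a leaf — any incoming edge would originate at a still smaller uncovered generator — so by Lemma~\ref{tree_lemma-1}(3), $a_l$ is simultaneously an $n$- and a $(kn-1)$-generator. Writing the emanating path as $a_l = a_{p_0} \to a_{p_1} \to \cdots \to a_\infty$, set $I^l_s := [a_l + sn,\, a_l + (s+1)n - 1]$ and let $b^l_s$, $c^l_s$ denote the number of $(kn-1)$-generators and $n$-cogenerators in $I^l_s$, respectively; by Lemma~\ref{paths-1}, $p_{s+1} = p_s + b^l_s$.

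The argument runs on a pair of recurrences. Lemma~\ref{numgens}(3) applied to $I^l_s$ equates $b^l_s$ with the number of $n$-cogenerators in the window $[a_l + sn - (kn-1),\, a_l + sn - 1]$, which decomposes as $I^l_{s-k} \cup \cdots \cup I^l_{s-1}$ minus the single integer $a_l + (s-k)n$. Since $a_l$ is an $n$-generator and $\Delta + n \subset \Delta$ forces $a_l - jn \notin \Delta$ for every $j \ge 1$, a direct check shows that the removed integer is an $n$-cogenerator precisely when $s = k - 1$ (where it equals $a_l - n$, paired with $a_l \in \Delta$), so
\[
b^l_s \;=\; \Bigl(\sum_{j = s - k}^{s - 1} c^l_j\Bigr) - [s = k - 1].
\]
Note the sign flip in the correction term relative to the $kn+1$ case: there the single integer is added to the window, here it is removed. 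Dually, Lemma~\ref{paths-1}(1) together with the definition of $g$ gives $c^l_s = g(a_{p_s}) - g(a_{p_{s+1}})$ for $s \ge 0$. Given initial data $c^l_{-k}, \ldots, c^l_{-1}$, these two recurrences determine $b^l_0, c^l_0, b^l_1, c^l_1, \ldots$ in turn, and the path terminates at $a_\infty$ as soon as a partial sum exceeds $kn - 2$.

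The initial values are read off from the already reconstructed portion of $T_\Delta$ via the identity
\[
c^l_{-i} \;=\; |\Delta \cap I^l_{-i+1}| - |\Delta \cap I^l_{-i}| \qquad (1 \le i \le k),
\]
which is a direct consequence of $\Delta + n \subset \Delta$ (every $y \in \Delta \cap I^l_{-i}$ has $y+n \in \Delta \cap I^l_{-i+1}$, so the difference counts exactly the $n$-generators in $I^l_{-i+1}$, equivalently the $n$-cogenerators in $I^l_{-i}$). Since $|\Delta \cap I^l_0| = n - g(a_l)$ is given, everything reduces to computing $|\Delta \cap I^l_{-i}|$ for $1 \le i \le k$ from the partial tree — and this is the main obstacle and the principal departure from the $kn+1$ proof. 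In that proof, Corollary~\ref{first in a sequence is a generator} guarantees that the smallest $\Delta$-element above $a_l - in$ is itself a $(kn+1)$-generator $a_{\alpha_i}$, yielding the clean formula $|\Delta \cap I^l_{-i}| = f(a_{\alpha_{i-1}}) - f(a_{\alpha_i})$ with $f$ computable from the known $K$-data. This property has no $kn-1$ analogue: a leaf $a_l$ may be preceded by $\Delta$-elements that are not themselves $(kn-1)$-generators, so the smallest $\Delta$-element above $a_l - in$ need not be $a_{\alpha_i}$. We instead count $|\Delta \cap I^l_{-i}|$ directly using the arithmetic-progression decomposition $\Delta = \bigsqcup_{j=0}^{m-1}\{a_j + s(kn-1) : s \ge 0\}$. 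Since $n < kn - 1$, each progression contributes at most one element to $I^l_{-i}$, and whether such a contribution occurs is a condition on $d_j := a_l - a_j$ decidable from the pair $(N_{jl}, K_{jl})$, both computable from the partial tree through Lemma~\ref{paths-1} and Corollary~\ref{KtoN-1}. Summing these contributions over the already processed generators produces $|\Delta \cap I^l_{-i}|$; the recursion then runs to completion, and induction on the number of processed leaves finishes the proof.
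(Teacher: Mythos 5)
Your path-by-path strategy, the recurrences on $b^l_s$ and $c^l_s$, and the sign flip in the correction term all match the paper. The gap is in computing the seed values $c^l_{-k},\dots,c^l_{-1}$, where you assert that the $kn+1$ technique ``has no $kn-1$ analogue'' and substitute a different argument.

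That assertion is wrong, and it is precisely the point where the paper's proof needs care. The analogue exists, but runs in the mirror direction: Corollary~\ref{last in a sequence is a generator} gives $[x,a_+(x)]\subset\Delta$ for $x\in\Delta$, and hence the \emph{largest} element of $\Delta$ \emph{below} $a_l-in$ (not the smallest above it) is automatically a $(kn-1)$-generator. Indeed, if $y$ is that largest element then $y+1\notin\Delta$ (either $y+1<a_l-in$, contradicting maximality, or $y+1=a_l-in\notin\Delta$), so $a_+(y)=y$. The paper sets $\alpha_i=\max\{j:N_{jl}>i\}$, identifies $a_{\alpha_i}$ with that element, and obtains
$\sharp(I^l_{-i}\cap\Delta)=f(a_{\alpha_{i-1}})-f(a_{\alpha_i})$ for $i>1$ and
$\sharp(I^l_{-1}\cap\Delta)=f(a_l)-f(a_{\alpha_1})-1$,
where $f(a_i)=\sum_{j\le i}K_{ji}$ counts $\Delta\cap(-\infty,a_i]$. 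All ingredients ($N_{ij}$, $K_{ij}$, $f$) are computable from the partial tree exactly as in the $kn+1$ case, so no new idea is needed here.

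Your replacement argument via the progressions $\{a_j+s(kn-1):s\ge 0\}$ is therefore unnecessary, and as written it does not go through. Since $K_{jl}=\lceil N_{jl}/k\rceil$ by Corollary~\ref{KtoN-1}, your claim amounts to deciding whether the progression from $a_j$ meets $I^l_{-i}$ from $N_{jl}$ alone; but $N_{jl}$ only confines $d_j=a_l-a_j$ to a window of width $n$, and the answer can change as $d_j$ ranges over that window (it depends on $d_j$ modulo $kn-1$, which $N_{jl}$ does not determine). You would need to show the answer is constant on the set of $d_j$ realizable by semimodules with the given partial tree, which you do not do. The subsidiary claim that each progression meets $I^l_{-i}$ at most once also requires $kn-1\ge n$, i.e.\ $k\ge 2$.
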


\begin{proof}
We will reconstruct $T_{\Delta}$ in the following order. First, we reconstruct the path from $a_0$ to $a_{\infty}.$ Then we take the smallest $(kn-1)$-generator $a_l,$ which is not covered yet, and reconstruct the path 
$$
a_l\rightarrow a_{l+b^{l}_0}\rightarrow a_{l+b^{l}_0+b^{l}_1}\rightarrow\dots\rightarrow a_{\infty}.
$$
We repeat this procedure until we run out of generators.

On each step all we need to do is to find numbers $b^{l}_0,b^{l}_1,b^{l}_2,\dots.$ According to Lemma \ref{paths-1}, $b^{l}_i$ is equal to the number of $(kn-1)$-generators in the interval $I^{l}_i:=[a_l+in,a_l+(i+1)n)$. We use this to define $b^{l}_i$ for all $i\in\mathbb Z.$

We will also need numbers $c^{l}_i,$ counting $n$-cogenerators in the same intervals $I^{l}_i.$
According to Lemma \ref{numgens}, $b^{l}_i$ is equal to the number of $n$-cogenerators in the interval $J^{l}_i:=[a_l+in-(kn-1),a_l+in).$ Note that 
$$
J^{l}_i=(I^{l}_{i-k}\sqcup\dots \sqcup I^{l}_{i-1})\backslash (a_l+(i-k)n)
$$

By construction, $a_l$ is a leaf of the tree $T_{\Delta}.$ By Lemma \ref{tree_lemma-1}, $a_l$ is a $(kn-1,n)$-generator of $\Delta.$ Therefore $a_l+in-kn=a_l+n(i-k)$ is an $n$-cogenerator if and only if $i=k-1.$ We conclude that $b^{l}_i$ can be expressed via $c^{l}_{i-k},\dots,c^{l}_{i-1}$ as follows:

\begin{equation}
\label{BtoC-1}
b^{l}_i=
\left\{
\begin{array}{l}
\sum\limits_{j=i-k}^{j=i-1} c^{l}_j, i\neq k-1 \\
-1+\sum\limits_{j=i-k}^{j=i-1} c^{l}_j,i=k-1.
\end{array}
\right.
\end{equation}

On the other hand, for $i\ge 0$ one can use Lemmas \ref{numgens} and \ref{paths-1} to express $c^{l}_i$ through $b^{l}_0,b^{l}_1,b^{l}_i$ using numbers $g(a_j):$
\begin{equation}
\label{CtoB-1}
c^{l}_i=g(a_{l+b^{l}_0+b^{l}_1+\dots+b^{l}_{i-1}})-g(a_{l+b^{l}_0+b^{l}_1+\dots+b^{l}_i}),\quad c^{l}_0=g(a_l)-g(a_{l+b^{l}_0}),
\end{equation}
Indeed, $g(a_{l+b^{l}_0+b^{l}_1+\dots+b^{l}_i})$ is equal to the number of integers not in $\Delta$ in the interval $[a_{l+b^{l}_0+b^{l}_1+\dots+b^{l}_i},a_{l+b^{l}_0+b^{l}_1+\dots+b^{l}_i}+n),$ which is the same as in the interval $[a_l+in, a_l+(i+1)n)$ by the first part of Lemma \ref{paths-1}.

Equations (\ref{BtoC-1}) and (\ref{CtoB-1}) together provide recurrence relations on numbers $b^{l}_i$ and $c^{l}_i.$ To start the recursive algorithm, one needs to find numbers $c^{l}_{-k},\dots, c^{l}_{-1}.$

Recall the numbers $N_{ij}=\lfloor\frac{a_j-a_i}{n}\rfloor+1$ and $K_{ij}=\lfloor\frac{a_j-a_i}{kn+1}\rfloor+1.$ Since we know all edges of the tree $T_{\Delta}$ with initial points less then $a_l,$ we can use Lemma \ref{paths-1} to find numbers $N_{ij}$ for any $i,j\le l.$ Then, by Corollary \ref{KtoN-1} we can find numbers $K_{ij}$ for any $i,j\le l.$ We can also compute
$$
f(a_i):=\sharp\left(\Delta\cap (-\infty,a_i)\right]=\sum_{j\le i}K_{ji}.
$$
Indeed, fix a remainder $0\le r<kn-1,$ such that the corresponding $(kn-1)$-generator $a_j\equiv r\ (\mod\  kn-1)$ is less than or equal to $a_i.$ Then there are exactly $K_{ji}$ integers in $\Delta\cap (-\infty,a_i]$ with remainder $r$ modulo $kn-1.$  

Note that
\begin{equation}\label{c_-i-1}
c^{l}_{-i}=\sharp (I^{l}_{-i+1}\cap\Delta)-\sharp(I^{l}_{-i}\cap\Delta)\  \mbox{\rm for all}\  i>0.
\end{equation}

Since $a_l$ is a $(kn-1,n)$-generator of $\Delta,$ we have $a_l-in\notin\Delta$ for $i>0.$ Consider the biggest element of $\Delta$ less than $a_l-in.$ By Corollary \ref{last in a sequence is a generator}, it is a $(kn-1)$-generator $a_{\alpha_i},$ where $\alpha_{i}:=\max\{j|N_{jl}\> i\}.$ If the set $\{j|N_{jl}> i\}$ is empty, then $a_l-in\le 0$ and there is no elements of $\Delta$ less then $a_l-in.$ For consistency, we set $\alpha_i=-1,$ $a_{-1}=-\infty,$ and $f(-\infty)=0$ in this case. 

Finally, we compute
$$
\sharp (I^{l}_{-i}\cap\Delta)=f(a_{\alpha_{i-1}})-f(a_{\alpha_{i}}) \mbox{\rm for}\ i>1,
$$
$$
\sharp (I^{l}_{-1}\cap\Delta)=f(a_l)-f(a_{\alpha_1})-1,
$$
and the number $\sharp (I^{l}_0\cap\Delta)=n-g(a_l)$ is given. Therefore, we can use Equation \ref{c_-i-1} to find numbers $c^{l}_{-k},\dots, c^{l}_{-1}.$
\end{proof}

\begin{theorem}
The tree $T_{\Delta}$ completely determines the semimodule $\Delta.$
\end{theorem}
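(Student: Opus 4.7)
The plan is to mirror the proof of Theorem \ref{tree -> semimod +1}, with the modifications appropriate to the $m=kn-1$ setting. The first step is to extract from the tree $T_\Delta$ the numbers $N_{ij}$ for all $0\le i<j\le kn-2$. For fixed $i$, traverse the directed path $a_i=a_{i_0}\to a_{i_1}\to\cdots\to a_{i_L}\to a_\infty$ in $T_\Delta$; by Lemma \ref{paths-1}(3), every $(kn-1)$-generator $a_j$ with $a_{i_{l-1}}\le a_j<a_{i_l}$ satisfies $N_{ij}=l$, and the boundary cases (when $a_j$ coincides with a vertex on the path, or lies beyond $a_{i_L}$ but still below $a_{i_L}+n$) are handled using Lemma \ref{paths-1}(2). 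Applying Corollary \ref{KtoN-1} then yields $K_{ij}=\lceil N_{ij}/k\rceil$ for all $i<j$.

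Since by definition $K_{0i}=\lfloor a_i/(kn-1)\rfloor+1$, the value of $K_{0i}$ pins down $a_i$ to the interval $[(K_{0i}-1)(kn-1),\,K_{0i}(kn-1))$, so it remains only to determine the residues $r_i:=a_i\bmod(kn-1)$. A $\Gamma_{m,n}$-semimodule has exactly $m=kn-1$ distinct $m$-generators, one in each residue class mod $m$; hence $\{r_0,\ldots,r_{kn-2}\}$ is a permutation of $\{0,1,\ldots,kn-2\}$, and knowing its relative order determines the $r_i$ themselves.

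To read off this order, write $a_j-a_i=(r_j-r_i)+(K_{0j}-K_{0i})(kn-1)$ for $i<j$. Since $-(kn-1)<r_j-r_i<kn-1$ and $r_j\ne r_i$, a direct case analysis of the definition $K_{ij}=\lfloor(a_j-a_i)/(kn-1)\rfloor+1$ gives
$$r_i<r_j\iff K_{ij}=K_{0j}-K_{0i}+1,\qquad r_i>r_j\iff K_{ij}=K_{0j}-K_{0i},$$
so the previously computed $K_{ij}$ determine the permutation $(r_0,\ldots,r_{kn-2})$, and therefore the generators $a_i=(K_{0i}-1)(kn-1)+r_i$.

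The argument is essentially parallel to the $m=kn+1$ case, so I do not expect a serious obstacle. The only points requiring care are (i)~the boundary case when extracting $N_{ij}$ from paths in $T_\Delta$ that terminate at the extra vertex $a_\infty$, and (ii)~the sign of the ``$+1$'' in the $K_{ij}$ comparison, which flips relative to the $kn+1$ proof because here $K_{ij}$ is defined via $\lfloor\cdot\rfloor+1$ rather than $\lceil\cdot\rceil$; once these bookkeeping issues are settled, the reconstruction $\Delta=\bigcup_i(a_i+\Gamma)$ with $a_i=(K_{0i}-1)(kn-1)+r_i$ finishes the proof.
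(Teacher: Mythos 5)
Your proposal follows the same route as the paper's proof, which for this statement simply says "The same as in the $m=kn+1$ case" and refers back to Theorem \ref{tree -> semimod +1}: read the $N_{ij}$ off the paths in $T_\Delta$ via Lemma \ref{paths-1}(3), convert to $K_{ij}$ via Corollary \ref{KtoN-1}, use $K_{0i}$ to pin each $a_i$ to an interval of length $m$, and recover the permutation of residues mod $m$ by comparing $K_{ij}$ with $K_{0j}-K_{0i}$. Your explicit criterion $r_i<r_j\iff K_{ij}=K_{0j}-K_{0i}+1$ is correct and in fact identical in content to the paper's condition $K_{ij}>K_{0j}-K_{0i}$, since $K_{ij}\in\{K_{0j}-K_{0i},\,K_{0j}-K_{0i}+1\}$ whenever $r_i\neq r_j$; the "sign flip" you worry about in point (ii) does not actually occur, because for $a_j-a_i$ not a multiple of $m$ one has $\lfloor(a_j-a_i)/m\rfloor+1=\lceil(a_j-a_i)/m\rceil$ in both conventions. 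Your point (i) about the step terminating at $a_\infty$ is a real bookkeeping difference from the $kn+1$ case (where the root $a_{kn}$ is itself the largest generator) and deserves the attention you give it; the paper is silent on it, but the remedy you sketch via Lemma \ref{paths-1}(1)--(2) is the right kind of argument. Overall this is the paper's proof.
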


\begin{proof}
The same as in the $m=kn+1$ case.
\end{proof}

\subsection{Example: reconstruction of a $\Gamma_{5,9}$-semimodule.}

Let $n=5,$ $k=2,$ and $m=2\times 5-1=9.$ Suppose that $g(a_0)=3,\ g(a_1)=g(a_2)=g(a_3)=2,$ $g(a_4)=1,$ and $g(a_5)=\dots=g(a_8)=0.$ At the first step we reconstruct the path from $a_0=0$ to $a_{\infty}$ in the tree $T_{\Delta}.$ Following the algorithm, we first need to find numbers $c^{0}_{-1}$ and $c^{0}_{-2},$ counting $5$-cogenerators in intervals $[-5,0)$ and $[-10,-5)$ correspondingly. Since there are no elements of $\Delta$ less than $a_0=0,$ we immediately conclude that 
$$
c^{0}_{-2}=\sharp\{\Delta\cap [-5,0)\}-\sharp\{\Delta\cap [-10,-5)\}=0-0=0,
$$ 
and 
$$
c^{0}_{-1}=\sharp\{\Delta\cap [0,5)\}-\sharp\{\Delta\cap [-5,0)\}=(5-g(a_0))-0=2.
$$

Using the recurrence relations (\ref{BtoC-1}) and (\ref{CtoB-1}) we immediately compute:

$$
b^0_0=c^{0}_{-2}+c^{0}_{-1}=2,\ c^{0}_0=g(a_0)-g(a_2)=3-2=1,
$$
$$
b^0_1=-1+c^{0}_{-1}+c^{0}_0=2,\ c^{0}_1=g(a_2)-g(a_4)=2-1=1,
$$
$$
b^0_2=c^{0}_0+c^{0}_1=2,\ c^{0}_2=g(a_4)-g(a_6)=1,
$$
$$
b^0_3=c^{0}_1+c^{0}_2=2,
$$

Therefore, the path from $a_0$ to $a_{\infty}$ is $a_0\rightarrow a_2\rightarrow a_4\rightarrow a_6\rightarrow a_8\rightarrow a_{\infty}.$ The smallest $9$-generator not covered yet is $a_1.$ Therefore, our next step is to recover the path from $a_1$ to $a_\infty.$

Again, we start by reconstructing numbers $c^{1}_{-1}$ and $c^{1}_{-2}.$ We have
$$
c^{1}_{-2}=\sharp\{\Delta\cap [a_1-5,a_1)\}-\sharp\{\Delta\cap [a_1-10,a_1-5)\},
$$ 
and 
$$
c^{1}_{-1}=\sharp\{\Delta\cap [a_1,a_1+5)\}-\sharp\{\Delta\cap [a_1-5,a_1)\}.
$$

The only $9$-generator of $\Delta$ less then $a_1$ is $a_0$ and, moreover, $a_0>a_1-5.$ In fact, $a_0+5=a_2>a_1.$ Indeed, $a_0+5=5$ is a $9$-generator, and we have the arrow $a_0\rightarrow a_2$ in the tree $T_{\Delta}.$ We conclude that
$$
\sharp\{\Delta\cap [a_1-10,a_1-5)\}=0,
$$
$$
\sharp\{\Delta\cap [a_1-5,a_1)\}=1,
$$
and
$$
\sharp\{\Delta\cap [a_1,a_1+5)\}=5-g(a_1)=3.
$$

Therefore,
$$
c^{1}_{-2}=1,\ \mbox{and}\ c^{1}_{-1}=2.
$$

We again use the recurrence relations (\ref{BtoC-1}) and (\ref{CtoB-1}):

$$
b^{1}_0=c^{1}_{-2}+c^{1}_{-1}=3,\ c^{1}_0=g(a_1)-g(a_4)=2-1=1,
$$
$$
b^{1}_1=-1+c^{1}_{-1}+c^{1}_0=2,\ c^{1}_1=g(a_4)-g(a_6)=1,
$$
$$
b^{1}_2=c^{1}_0+c^{1}_1=2,\ c^{1}_2=g(a_6)-g(a_8)=0.
$$

Therefore, the path from $a_1$ to $a_{\infty}$ is $a_1\rightarrow a_4\rightarrow a_6\rightarrow a_8\rightarrow a_{\infty}.$ The smallest $9$-generator not covered yet is $a_3.$ Therefore, our next step is to recover the path from $a_3$ to $a_{\infty}.$

Similarly to the above, we need to reconstruct numbers $c^{3}_{-2}$ and $c^{3}_{-1}$ first: 
$$
c^{3}_{-2}=\sharp\{\Delta\cap [a_3-5,a_3)\}-\sharp\{\Delta\cap [a_3-10,a_3-5)\},
$$ 
and 
$$
c^{3}_{-1}=\sharp\{\Delta\cap [a_3,a_3+5)\}-\sharp\{\Delta\cap [a_3-5,a_3)\}.
$$
There are three $9$-generators less than $a_3:$ $a_0,\ a_1,$ and $a_2.$ Since we already know the path $a_0\rightarrow a_2\rightarrow a_4,$ we immediately conclude that $a_3<a_0+10.$ Since $a_3$ and $a_0$ has different remainders modulo $9,$ we get $a_3<a_0+9.$ Therefore, there is no elements of $\Delta$ less than or equal to $a_3-9.$ We conclude that $a_0,\ a_1,$ and $a_2$ are the only elements of $\Delta$ less than $a_3.$ Moreover, $a_2+5>a_3$ and $a_1+5>a_3,$ while $a_0+5=a_2<a_3<a_0+10.$ Therefore,

$$
\sharp\{\Delta\cap [a_3-10,a_3-5)\}=1,
$$
$$
\sharp\{\Delta\cap [a_3-5,a_3)\}=2,
$$
and
$$
\sharp\{\Delta\cap [a_3,a_3+5)\}=5-g(a_3)=3.
$$

We conclude that
$$
c^{3}_{-2}=1,\ \mbox{and}\ c^{3}_{-1}=1.
$$

Once again, we use the recurrence relations (\ref{BtoC-1}) and (\ref{CtoB-1}):

$$
b^{3}_0=c^{3}_{-2}+c^{3}_{-1}=2,\ c^{3}_0=g(a_3)-g(a_5)=2,
$$
$$
b^{3}_1=-1+c^{3}_{-1}+c^{3}_0=2,\ c^{3}_1=g(a_5)-g(a_7)=0.
$$
$$
b^{3}_2=c^{3}_{0}+c^{3}_{1}=2.
$$
Therefore, the path from $a_3$ to $a_{\infty}$ is $a_3\rightarrow a_5\rightarrow a_7\rightarrow a_{\infty}.$ See Figure \ref{tree_rec_example-1} for the full tree $T_{\Delta}.$

\begin{figure}
\centering
\begin{tikzpicture}
\draw (0,4) node {$a_0$};
\draw (1,3) node {$a_2$};
\draw (2,2) node {$a_4$};
\draw (3,1) node {$a_6$};
\draw (4,0) node {$a_8$};
\draw (5,-1) node {$a_{\infty}$};

\draw (8,2) node {$a_3$};
\draw (7,1) node {$a_5$};
\draw (6,0) node {$a_7$};

\draw (3,3) node {$a_2$};

\draw [->,>=stealth] (0.2,3.8)--(0.8,3.2);
\draw [->,>=stealth] (1.2,2.8)--(1.8,2.2);
\draw [->,>=stealth] (2.2,1.8)--(2.8,1.2);
\draw [->,>=stealth] (3.2,0.8)--(3.8,0.2);
\draw [->,>=stealth] (4.2,-0.2)--(4.8,-0.8);

\draw [->,>=stealth] (2.8,2.8)--(2.2,2.2);

\draw [->,>=stealth] (7.8,1.8)--(7.2,1.2);
\draw [->,>=stealth] (6.8,0.8)--(6.2,0.2);
\draw [->,>=stealth] (5.8,-0.2)--(5.2,-0.8);
\end{tikzpicture}
\caption{The tree $T_{\Delta}$ for the case $n=5,\ m=9,$ $g(a_0)=3,\ g(a_1)=g(a_2)=g(a_3)=2,$ $g(a_4)=1,$ and $g(a_5)=\dots=g(a_8)=0.$}\label{tree_rec_example-1}
\end{figure}

Finally, we reconstruct the semimodule $\Delta$ from the tree $T_{\Delta}$ following the algorithm from the Theorem \ref{tree -> semimod +1}. First, we use Lemma \ref{paths-1} to find numbers $N_{0,i}$ for $i=1,2,\dots, 8$ and Corollary \ref{KtoN-1} to find $K_{0,i}.$ We get

$$
N_{0,1}=1,\ N_{0,2}=N_{0,3}=2,\ N_{0,4}=N_{0,5}=3,\ N_{0,6}=N_{0,7}=4,\ N_{0,8}=5,
$$
and
$$
K_{0,1}=K_{0,2}=K_{0,3}=1,\ K_{0,4}=K_{0,5}=K_{0,6}=K_{0,7}=2,\ K_{0,8}=3.
$$

Therefore, we get that $a_1,a_2,$ and $a_3$ are in the interval $(0,9),$\ $a_4,a_5,a_6,a_7$ are in the interval $(9,18),$ and $a_8$ is in the interval $(18,27).$

Now we need to compare the remainders $r_1,\dots, r_8$ of the $9$-generators $a_1,\dots,a_8.$ We already know that $r_1<r_2<r_3,$ and $r_4<r_5<r_6<r_7.$ Let us compare $r_1$ and $r_4.$ By Lemma \ref{paths-1} and the tree $T_{\Delta},$ we get $N_{1,4}=2.$ Therefore, by Corollary \ref{KtoN}, $K_{1,4}=1,$ which means that $a_4-a_1<9.$ Therefore, $r_4=a_4-9<a_1=r_1.$

Similarly, one computes that $r_5<r_1,$\ $r_2<r_6<r_3,$ and $r_7>r_3.$ Therefore, we get 
$$
r_4<r_5<r_1<r_2<r_6<r_3<r_7.
$$

To compare $r_8$ with the rest of the remainders, we compute
$$
N_{1,8}=N_{2,8}=4,\ N_{3,8}=N_{4,8}=3,\ N_{5,8}=N_{6,8}=2,\ N_{7,8}=1,
$$
and
$$
K_{1,8}=K_{2,8}=K_{3,8}=K_{4,8}=2,\ K_{5,8}=K_{6,8}=K_{7,8}=1.
$$

So, $r_8=a_8-18<a_i-9=r_i$ for $i=5,6,7,$\ $r_8=a_8-18<a_i=r_i$ for $i=1,2,3,$ and $r_8=a_8-18>a_4-9=r_4.$ Therefore, 

$$
r_4<r_8<r_5<r_1<r_2<r_6<r_3<r_7,
$$
or 
$$
r_4=1,\ r_8=2,\ r_5=3,\ r_1=4,\ r_2=5,\ r_6=6,\ r_3=7,\ r_7=8.
$$
Finally,
$$
a_0=0,\ a_1=r_1=4,\ a_2=r_2=5,\ a_3=r_3=7,
$$
$$
a_4=r_4+9=10,\ a_5=r_5+9=12,\ a_6=r_6+9=15,\ a_7=r_7+9=17,
$$
and
$$
a_8=r_8+18=20.
$$

\subsection{Bounce path and statistic.}
\label{SS bounce}

In the case $m=kn+1$ the path $a_0\rightarrow a_{b^{0}_0}\rightarrow a_{b^{0}_0+b^{0}_1}\rightarrow\dots\rightarrow a_{b^{0}_0+\dots +b^{0}_s}$ in the tree $T_{\Delta}$ can be compared with the {\it bounce path}, constructed by J. Haglund in the case $m=n+1$ and generalized by N. Loehr for the case $m=kn+1.$ The numbers $b^{0}_0,\dots, b^{0}_s$ are equal to the horizontal steps in the bounce path. Here we recall Loehr's definition and check that it matches with the path from $a_0$ to $a_{kn}$ in the tree $T_{\Delta}.$ We also generalize the bounce path and statistic to the case $m=kn-1$ by considering the path from $a_0$ to $a_{\infty}.$

\begin{definition}(\cite{loehr})
Let $D$ be a Young diagram contained 
below the diagonal in an $m\times n$-rectangle. Let $m=kn+1.$ The bounce path is defined as follows. We start from the northwest corner. We alternate southward and eastward steps with the first step going southward. We always stay outside the diagram $D.$ 

On each southward step we go south until we hit a horizontal piece of the boundary of $D.$ Each eastward step is equal to the sum of the last $k$ southward steps (if there were less than $k$ southward steps yet, then it is equal to the sum of all preceding southward steps).   
\end{definition}

Let us introduce the coordinates so that the southwest corner is $(0,0).$ Then the bounce path starts from $(0,n)$ and finishes at $(kn,0).$ The bounce statistic is defined as follows:

\begin{definition}\cite{loehr}
Let $(v_0,\dots,v_a)$ and $(h_0,\dots, h_a)$ be the vertical and horizontal steps of the bounce path correspondingly. Then the statistic $\bounce(D)$ is defined by the formula:
$$
\bounce(D):=(n-v_0)+(n-v_0-v_1)+\dots+(n-\sum\limits_{0\le i\le a} v_i).
$$
In other words, $\bounce(D)$ is equal to the sum of vertical coordinates of the southwest corners of the bounce path.
\end{definition}

Let now $\Delta$ be a semimodule over $\Gamma_{m,n},$\ $m=kn+1.$ Let $D$ be the corresponding Young diagram, and, as in the Section \ref{section G_m G_n}, $G_{kn+1}(D)$ be the diagram with columns $g(a_0),\dots,g(a_{kn}).$ 

Let $T_{\Delta}$ be the bounce tree, and $a_0\rightarrow a_{b^{0}_0}\rightarrow\dots\rightarrow a_{b^{0}_0+\dots+b^{0}_s}=a_{kn}$ be the path from $a_0$ to $a_{kn}$ in it.

\begin{theorem}\label{tree to path}
The horizontal steps $(h_0, \dots, h_a)$ of the bounce path for $G_{kn+1} ( D )$ are equal to $(b^{0}_0, 
\dots,b^{0}_s).$
In particular, $a=s$.
\end{theorem}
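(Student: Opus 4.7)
The plan is to prove the theorem by induction on $i$, showing simultaneously that the partial sums $H_i := h_0 + \dots + h_i$ equal $b^0_0 + \dots + b^0_i$, and that the vertical steps satisfy $v_0 = n - g(a_0)$ together with $v_j = c^0_{j-1}$ for $j \ge 1$, where the $c^0_{\bullet}$ are the intermediate quantities from the proof of Theorem \ref{tree reconstruction +1}. The underlying geometric observation is that for $i \ge 1$ the $i$-th southward step begins at $y$-coordinate $g(a_{H_{i-2}})$ (inherited from the bottom of the previous southward step) and terminates at the top of the column $x = H_{i-1}$, namely at height $g(a_{H_{i-1}})$; by (\ref{CtoB}) this drop is exactly $c^0_{i-1}$.

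For the base case, directly from the bounce-path definition one gets $v_0 = n - g(a_0)$ and $h_0 = v_0$. The matching statement $b^0_0 = n - g(a_0)$ is the most delicate point. Since $0 \in \Delta$ is the unique element of $\Delta \subset \mathbb{Z}_{\ge 0}$ in $(-n, 0]$, the recurrence for $c^0_{-j}$ used in the proof of Theorem \ref{tree reconstruction +1} yields $c^0_{-1} = n - g(a_0) - 1$, $c^0_{-2} = 1$, and $c^0_{-j} = 0$ for $j \ge 3$. These combine with the piecewise ``$+1$'' in (\ref{BtoC}) (which is active when $i = k - 1$) to give $b^0_0 = n - g(a_0)$, so $h_0 = b^0_0$.

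In the inductive step, assuming $H_{i-1} = b^0_0 + \dots + b^0_{i-1}$, the identity $v_i = c^0_{i-1}$ follows from the geometric observation above. The bounce rule then gives $h_i = \sum_{j=\max(0, i-k+1)}^{i} v_j$, and splitting into the three cases $i \ge k$, $i = k-1$, and $0 \le i \le k-2$ one matches this sum term-by-term against (\ref{BtoC}): in every case the ``$+1$'' exception in (\ref{BtoC}) is absorbed either by the contribution $c^0_{-2} = 1$ (when $i < k-1$) or by the mismatch $v_0 - c^0_{-1} = 1$ (when $i \ge k-1$), so $h_i = b^0_i$. Finally, both $\{h_i\}$ and $\{b^0_i\}$ consist of positive integers summing to $kn$ (the total horizontal distance of the bounce path, respectively the index of the terminal generator $a_{kn}$), so they must have the same length, whence $a = s$.

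The main obstacle is the careful bookkeeping at the start of the path: the anomalous values $c^0_{-1}$ and $c^0_{-2}$ reflect the single element $0 \in \Delta$ interrupting the otherwise empty negative intervals, and this anomaly must be reconciled with both the piecewise ``$+1$'' in (\ref{BtoC}) and the ``transient'' behavior of the bounce path during its first $k-1$ eastward steps, when fewer than $k$ vertical steps have accumulated. Once these initial accounting issues have been settled, the rest of the induction is a mechanical substitution of (\ref{CtoB}) into (\ref{BtoC}).
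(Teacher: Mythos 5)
Your proof is correct and follows essentially the same route as the paper's: you identify the anomalous boundary values $c^0_{-1} = n-g(a_0)-1$, $c^0_{-2}=1$, $c^0_{-j}=0$ for $j\ge 3$, match the bounce recurrences $v_i$, $h_i$ term by term against \eqref{CtoB}, \eqref{BtoC}, and observe that the two discrepancies (the extra $+1$ hidden in $v_0$ versus $c^0_{-1}$ and the explicit $+1$ in $b^0_{k-1}$) cancel. The paper states this comparison more tersely without the explicit induction scaffold or the case split $i<k-1$, $i=k-1$, $i\ge k$, but the content is the same.
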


\begin{proof}
Recall the formulae (\ref{BtoC}) and (\ref{CtoB}) (we plug $l=0$):
$$
b^{0}_i=
\left\{
\begin{array}{l}
\sum\limits_{j=i-k}^{j=i-1} c^{0}_j, i\neq k-1 \\
1+\sum\limits_{j=i-k}^{j=i-1} c^{0}_j,i=k-1.
\end{array}
\right.
$$
\noindent and
$$
c^{0}_i=g(a_{b^{0}_0+b^{0}_1+\dots+b^{0}_{i-1}})-g(a_{b^{0}_0+b^{0}_1+\dots+b^{0}_i}),\quad c^{0}_0=g(a_0)-g(a_{b^{0}_0}).
$$
\noindent where $c^{0}_i$ is the number of $n$-cogenerators in the interval $(in,(i+1)n].$

We immediately see that
$$
c^{0}_i=
\left\{
\begin{array}{l}
0, i<-2\\
1, i=-2\\
n-g(a_0)-1, i=-1
\end{array}
\right.
$$
Indeed, $-n$ is the smallest $n$-cogenerator, and there are exactly $n-g(a_0)$\ $n$-cogenerators less than zero.

For $i\ge 0$ the recurrence relations on the numbers $c^{0}_i$ and $b^{0}_i$ are almost the same as the definitions of the vertical and horizontal steps of the bounce path correspondingly. There are two differences:

\begin{enumerate}
\item The first vertical step equals  $v_0=n-g(a_0)=c^{0}_{-1}+1=c^{0}_{-1}+c^{0}_{-2}.$
\item For $i=k-1$ one has $b^{0}_{k-1}=1+\sum\limits_{j=-1}^{j=k-2} c^{0}_j=\sum\limits_{j=-2}^{j=k-2} c^{0}_j.$
\end{enumerate}

One immediately sees that those differences cancel each other. Therefore one gets
$$
v_i=
\left\{
\begin{array}{l}
c^{0}_{i-1}, i>0\\
c^{0}_{-1}+c^{0}_{-2}, i=0
\end{array}
\right.
$$
and
$$
h_i=b^{0}_i
$$
\noindent for all $i\ge 0.$
\end{proof}

\begin{theorem}
\label{BounceToArea}
We get the following relation:
$$
\bounce(G_{kn+1}(D))=\delta-|D|.
$$
\end{theorem}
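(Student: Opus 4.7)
The plan is to show that both sides of the identity equal $\#(\mathbb Z_{\ge 0}\setminus\Delta)$, where $\Delta$ is the $\Gamma_{m,n}$-semimodule with $D(\Delta)=D$, so the proof splits into three steps.

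First I would identify $\delta-|D|$ with the number of gaps of $\Delta$. Since $0\in\Delta$ and $\Delta+\Gamma\subset\Delta$, one has $\Gamma\subset\Delta$, hence $\mathbb Z_{\ge 0}\setminus\Delta\subset\mathbb Z_{\ge 0}\setminus\Gamma$. By Lemma \ref{gammasym}, the involution $a\mapsto 2\delta-1-a$ exchanges $\Gamma\cap[0,2\delta-1]$ with its complement, so $\#(\mathbb Z_{\ge 0}\setminus\Gamma)=\delta$. Combining this with the bijection between boxes of $D$ and elements of $\Delta\setminus\Gamma$ given by the labelling of the rectangle, one obtains $\delta-|D|=\#(\mathbb Z_{\ge 0}\setminus\Gamma)-\#(\Delta\setminus\Gamma)=\#(\mathbb Z_{\ge 0}\setminus\Delta)$.

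Next I would rewrite the bounce statistic as a sum along the tree path. By Theorem \ref{tree to path} the horizontal steps of the bounce path of $G_{kn+1}(D)$ are $b^{0}_0,\ldots,b^{0}_s$, and the vertical contributions are as computed in that proof. Telescoping equation (\ref{CtoB}) then identifies the height $r_i:=n-(v_0+\cdots+v_i)$ of the $i$-th eastward segment with $g(a_{t_i})$, where $a_0=a_{t_0}\to a_{t_1}\to\cdots\to a_{t_s}=a_{kn}$ is the unique path from $a_0$ to $a_{kn}$ in $T_{\Delta}$. Hence
\[
\bounce(G_{kn+1}(D))=\sum_{i=0}^{s} g(a_{t_i}).
\]
Applying Lemma \ref{paths}(1) to each single edge $a_{t_i}\to a_{t_{i+1}}$ gives $[a_{t_{i+1}},a_{t_i}+n]\subset\Delta$, so for $i<s$ one has $g(a_{t_i})=\#\bigl([a_{t_i},a_{t_i}+n)\setminus\Delta\bigr)=\#\bigl([a_{t_i},a_{t_{i+1}})\setminus\Delta\bigr)$. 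Summing over $i$, the telescoped sum equals $\#\bigl([0,a_{kn})\setminus\Delta\bigr)+g(a_{kn})$.

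The remaining and key step is to show that every gap of $\Delta$ lies in $[0,a_{kn})$, which in particular gives $g(a_{kn})=0$. Since the generators $a_0,\ldots,a_{kn}$ exhaust all $m=kn+1$ residue classes modulo $m$, for any $y\in\mathbb Z_{\ge 0}$ with $y\ge a_{kn}$ the unique $a_j$ with $a_j\equiv y\pmod{m}$ satisfies $a_j\le a_{kn}\le y$, so $y=a_j+\mu m$ for some $\mu\ge 0$ and hence $y\in\Delta$ by closure of $\Delta$ under $+m$. Combining this with the previous step yields $\bounce(G_{kn+1}(D))=\#(\mathbb Z_{\ge 0}\setminus\Delta)=\delta-|D|$. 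I do not anticipate any genuine obstacle beyond careful bookkeeping in the telescoping, in particular in verifying $r_0=g(a_0)$ by combining the $c^{0}_{-1}$ and $c^{0}_{-2}$ contributions exactly as in the proof of Theorem \ref{tree to path}.
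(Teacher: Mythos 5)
Your proof is correct and follows essentially the same route as the paper: both sides are identified with $\#(\mathbb{Z}_{\geq 0}\setminus\Delta)$ via a telescoping decomposition based on Lemma \ref{paths}, yielding the intermediate sum $\sum_i g(a_{t_i})$ over the bounce-path vertices, which matches $\bounce(G_{kn+1}(D))$ by the computations in the proof of Theorem \ref{tree to path}. The only cosmetic difference is that you partition the gaps by the intervals $[a_{t_i},a_{t_{i+1}})$ along the path, whereas the paper uses the blocks $(in,(i+1)n]$.
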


\begin{proof}
Indeed, we have 
$$
\delta-|D|=\sharp(\mathbb Z_{> 0}\backslash\Delta)=\sum\limits_{i=0}^{\infty}\sharp\left((in,(i+1)n\ ]\backslash\Delta\right).
$$
By Lemma \ref{paths}, $[a_{b^{0}_0+\dots+b^{0}_i},ni]\subset\Delta.$ We get
$$
\sharp(\ (in,(i+1)n\ ]\backslash\Delta)=g(a_{b^{0}_0+\dots+b^{0}_{i-1}}) \ \mbox{and}\ \sharp(\ (0,n]\backslash\Delta)=g(a_0).
$$
Therefore,
$$
\delta-|D|=g(a_0)+g(a_{b^{0}_0})+g(a_{b^{0}_0+b^{0}_1})+\dots+g(a_{kn}),
$$
Note that this matches the definition of the bounce statistic.
\end{proof}

\begin{remark}
It follows from the proof of Theorem \ref{BounceToArea} that the number $\sharp([in,(i+1)n]\setminus \Delta)$ is equal to the vertical coordinate of the $i$-th southwest corner of the bounce path.
\end{remark}

We illustrate the bounce path in Figure \ref{bounce path}.

\begin{figure}
\centering
\begin{tikzpicture}
\draw [dashed] (0,0)--(0,3)--(7,3)--(7,0)--(0,0);
\draw [dashed] (0,1)--(7,1);
\draw [dashed] (0,2)--(7,2);
\draw [dashed] (1,1)--(1,3);
\draw [dashed] (2,1)--(2,3);
\draw [dashed] (3,1)--(3,3);
\draw [dashed] (4,0)--(4,3);
\draw [dashed] (5,0)--(5,3);
\draw [dashed] (6,0)--(6,3);
\draw (0,0)--(0,1)--(3,1)--(3,0)--(0,0);
\draw [line width=0.5mm] (0,3)--(0,1)--(4,1)--(4,0)--(6,0);
\draw (1,0.5) node {$G_7(\Delta)$};
\draw (2.05,0.82) node {$\circlearrowleft$};
\draw (5.05,-0.18) node {$\circlearrowleft$};
\end{tikzpicture}
\caption{The diagram $G_{7}(\Delta)$ with the bounce path for the $\Gamma_{3,7}$-semimodule $\Delta=\{0,2,3,5,6,7,8,9,\ldots\}.$ Here the $7$-generators are $0,2,3,5,6,8,11,$ $g(0)=g(2)=g(3)=1,$ and $g(5)=g(6)=g(8)=g(11)=0.$}
\label{bounce path}
\end{figure}
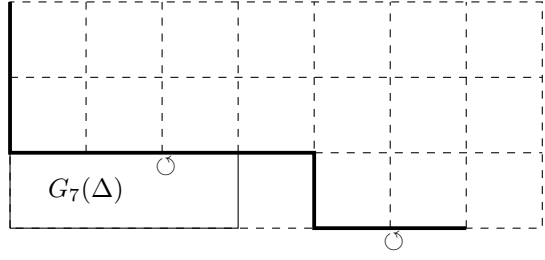

We use the above observations to generalize the bounce path and statistic to the case $m=kn-1.$

Let $\Delta$ be a $\Gamma_{m,n}$--semimodule, $m=kn-1,\  D=D(\Delta).$ Consider the bounce tree $T_{\Delta}.$ Let $a_0\rightarrow a_{b^{0}_0}\rightarrow\dots\rightarrow a_{b^{0}_0+\dots+b^{0}_s}\rightarrow a_{\infty}$ be the path from $a_0=0$ to $a_{\infty}$ in the tree $T_{\Delta}.$ Consider the Young diagram $G_{kn-1}(D)$ embedded in the $m\times n$ rectangle below the diagonal. 

\begin{definition}
The bounce path for $G_{kn-1}(D)$ starts at the northwest corner. It consists of alternating southward and eastward steps, starting with a southward step. 

On each southward step we go south until we hit a horizontal piece of the boundary of $G_{kn-1}(D).$ The $i$th eastward step equals  $b^{0}_i,$ starting with $i=0.$
\end{definition}  

The bounce statistic is defined in the same way as in the $m=kn+1$ case:

\begin{definition}
Let $(v_0,\dots,v_s)$ and $(h_0,\dots, h_s)$ be the vertical and horizontal steps of the bounce path correspondingly. Then the $\bounce(G_{kn-1}(D))$ is given by
$$
\bounce(G_{kn-1}(D))=(n-v_0)+(n-v_0-v_1)+\dots+(n-\sum\limits_{0\le i\le s} v_i).
$$
In other words, $\bounce(G_{kn-1}(D))$ is equal to the sum of vertical coordinates of the southwest corners of the bounce path.
\end{definition}

The following formula relating the bounce of $G_{kn-1}(D)$ with the area of $D$ is proved in the same way as in the $m=kn+1$ case:

\begin{theorem}
We get the following relation:
$$
\bounce(G_{kn-1}(D))=\delta-|D|.
$$
\end{theorem}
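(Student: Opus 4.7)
The plan is to mirror the proof of Theorem~\ref{BounceToArea} in the $m=kn-1$ setting, replacing Lemma~\ref{paths} by Lemma~\ref{paths-1} and using the half-open intervals $[in,(i+1)n)$ of Section~\ref{SS kn-1}. Write $a_0=a_{p_0}\to a_{p_1}\to\dots\to a_{p_s}\to a_\infty$ for the path in $T_\Delta$ from $a_0$ to the root, with $p_i:=b^{0}_0+\dots+b^{0}_{i-1}$. The goal is the chain
$$
\delta-|D| \;=\; \sharp(\mathbb{Z}_{\ge 0}\setminus\Delta) \;=\; \sum_{i=0}^{\infty}\sharp\bigl([in,(i+1)n)\setminus\Delta\bigr) \;=\; \sum_{i=0}^{s} g(a_{p_i}) \;=\; \bounce(G_{kn-1}(D)).
$$
The first equality follows from $\Gamma\subset\Delta$ and the complement formula $\delta=|D|+\sharp(\mathbb{Z}_{\ge 0}\setminus\Delta)$; the second is just the partition of $\mathbb{Z}_{\ge 0}$ into blocks of length $n$.

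The main step is the middle identity $\sharp([in,(i+1)n)\setminus\Delta)=g(a_{p_i})$ for $0\le i\le s$. By Lemma~\ref{paths-1} applied to the sub-path $a_0\to\dots\to a_{p_i}$, the interval $[in,a_{p_i}-1]$ lies in $\Delta$, and since $a_{p_i}\in\Delta$ this upgrades to $[in,a_{p_i}]\subset\Delta$. Applying the same lemma to the longer sub-path $a_0\to\dots\to a_{p_{i+1}}$ yields $[(i+1)n,a_{p_{i+1}}-1]\subset\Delta$, which together with $a_{p_i}+n\in\Delta$ and the inequality $a_{p_{i+1}}\ge a_{p_i}+n$ (from $a_{p_{i+1}}=a_+(a_{p_i}+n)$) gives $[(i+1)n,a_{p_i}+n]\subset\Delta$. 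These two inclusions combine to
$$
[in,(i+1)n)\setminus\Delta \;=\; [a_{p_i},a_{p_i}+n)\setminus\Delta,
$$
whose cardinality is $g(a_{p_i})$ by definition. For $i>s$ both sides vanish, so the infinite sum truncates at $s$.

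The anticipated obstacle is the boundary case $i=s$, where the successor of $a_{p_s}$ is the symbol $a_\infty$ rather than a finite generator. Here the inclusion $[(i+1)n,a_{p_s}+n]\subset\Delta$ must be replaced by the stronger statement $[a_{p_s}+n,\infty)\subset\Delta$, which follows from $a_+(a_{p_s}+n)=\infty$ together with cofiniteness of $\Delta$: if any gap existed above $a_{p_s}+n$, the minimum of $\Delta$ above that gap would itself be a $(kn-1)$-generator, contradicting the definition of $a_\infty$. The remaining equality $\sum_{i=0}^{s}g(a_{p_i})=\bounce(G_{kn-1}(D))$ is the telescoping already implicit in Theorem~\ref{tree to path}: the southward steps of the bounce path satisfy $v_0=n-g(a_0)$ and $v_i=g(a_{p_{i-1}})-g(a_{p_i})$ for $i\ge 1$ (read off from the column heights of $G_{kn-1}(D)$ along the path), whence $n-\sum_{j=0}^{i}v_j=g(a_{p_i})$, and summing over $i=0,\dots,s$ reproduces the bounce statistic.
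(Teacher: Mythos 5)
Your proof follows the same route the paper intends: the paper states only that the $m=kn-1$ case ``is proved in the same way as in the $m=kn+1$ case,'' and your write-up is exactly the expansion of that, with the half-open intervals $[in,(i+1)n)$ replacing the half-open intervals $(in,(i+1)n]$ and Lemma~\ref{paths-1} replacing Lemma~\ref{paths}. The decomposition $\delta-|D|=\sum_i\sharp([in,(i+1)n)\setminus\Delta)$, the identification $\sharp([in,(i+1)n)\setminus\Delta)=g(a_{p_i})$ via the two interval inclusions, and the telescoping $n-\sum_{j\le i}v_j=g(a_{p_i})$ are all correct and match the paper's computation of the $c^0_i$ and $v_i$.

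One small slip in the boundary case $i=s$: you claim that if a gap existed above $a_{p_s}+n$, then ``the minimum of $\Delta$ above that gap would itself be a $(kn-1)$-generator.'' That is the correct heuristic for $m=kn+1$ (Lemma~\ref{first in a sequence is a generator lemma} with $\alpha=0$: $x\in\Delta$, $x-1\notin\Delta$ forces $x$ to be an $m$-generator), but it is the wrong side of the gap for $m=kn-1$. Here Lemma~\ref{last in a sequence is a generator lemma} with $\alpha=0$ says that $x\in\Delta$, $x+1\notin\Delta$ forces $x$ to be a $(kn-1)$-generator, so the contradiction comes from the \emph{maximal} element of $\Delta$ immediately \emph{below} the smallest gap $y\ge a_{p_s}+n$: that element $y-1\ge a_{p_s}+n$ lies in $\Delta$ and its successor does not, hence it is a $(kn-1)$-generator $\ge a_{p_s}+n$, contradicting $a_+(a_{p_s}+n)=\infty$. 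Equivalently, you can just invoke Corollary~\ref{last in a sequence is a generator} directly, whose iterative proof already gives $[a_{p_s}+n,\infty)\subset\Delta$ when $a_+(a_{p_s}+n)=\infty$. With that one sentence corrected the argument is complete.
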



Finally, we expand the recurrence relations involved in the definition of the bounce path in this case to get a simple description of the bounce path in terms of the diagram $G_{kn-1}(D).$ This is parallel with the proof of the Theorem \ref{tree to path} in the $m=kn+1$ case.

Recall the formulae (\ref{BtoC-1}) and (\ref{CtoB-1}) (we plug $l=0$):
$$
b^{0}_i=
\left\{
\begin{array}{l}
\sum\limits_{j=i-k}^{j=i-1} c^{0}_j, i\neq k-1 \\
-1+\sum\limits_{j=i-k}^{j=i-1} c^{0}_j,i=k-1.
\end{array}
\right.
$$

And
$$
c^{0}_i=g(a_{b^{0}_0+b^{0}_1+\dots+b^{0}_{i-1}})-g(a_{b^{0}_0+b^{0}_1+\dots+b^{0}_i}),\quad c^{0}_0=g(a_0)-g(a_{b^{0}_0}),
$$
\noindent where $c^{0}_i$ is the number of $n$-cogenerators in the interval $[in,(i+1)n).$

We immediately see that
$$
c^{0}_i=
\left\{
\begin{array}{l}
0, i<-1\\
n-g(a_0), i=-1
\end{array}
\right.
$$
Indeed, $-n$ is the smallest $n$-cogenerator, and there are exactly $n-g(a_0)$\ $n$-cogenerators less than zero.

Comparing the recurrence relations one immediately gets
$$
v_i=c^{0}_{i-1}\quad \mbox{\rm and}\quad
h_i=b^{0}_i
\quad \mbox{\rm for all}\quad  i\ge 0.$$

Therefore, the bounce path in the $m=kn-1$ case is constructed in the same way as in the $m=kn+1$ case, except that the $(k-1)$-th horizontal step is shorter by $1$ (we count steps starting from $0$):
$$
h_i=
\left\{
\begin{array}{l}
\sum\limits_{j=i-k}^{j=i-1} v_j, i\neq k-1 \\
-1+\sum\limits_{j=i-k}^{j=i-1} v_j,i=k-1.
\end{array}
\right.
$$

We illustrate the bounce path in Figure \ref{bounce path-1}.

Let us return to the (3,8)-module $\Delta=\{0,3,4,6,7,8,9,\ldots\}$ from example in Figure \ref{tree-1}.
The $8$-generators are $0,3,4,6,7,9,10,13$, so the diagram $G_{8}(\Delta)$ with the bounce path is shown in Figure \ref{bounce path-1}.

\begin{figure}
\centering
\begin{tikzpicture}
\draw [dashed] (0,0)--(0,3)--(8,3)--(8,0)--(0,0);
\draw [dashed] (1,1)--(8,1);
\draw [dashed] (0,2)--(8,2);
\draw [dashed] (1,1)--(1,3);
\draw [dashed] (2,1)--(2,3);
\draw [dashed] (3,1)--(3,3);
\draw [dashed] (4,0)--(4,3);
\draw [dashed] (5,0)--(5,3);
\draw [dashed] (6,0)--(6,3);
\draw [dashed] (7,0)--(7,3);
\draw (0,0)--(0,2)--(1,2)--(1,1)--(3,1)--(3,0)--(0,0);
\draw [line width=0.5mm] (0,3)--(0,2)--(1,2)--(1,1)--(3,1)--(3,0)--(8,0);
\draw (1,0.5) node {$G_8(\Delta)$};
\draw (5.05,-0.18) node {$\circlearrowleft$};
\draw (7.05,-0.18) node {$\circlearrowleft$};
\end{tikzpicture}
\caption{The diagram $G_{8}(\Delta)$ with the bounce path for the $\Gamma_{3,8}$-semimodule $\Delta=\{0,3,4,6,7,8,9,\ldots\}.$ Here the $8$-generators are $0,3,4,6,7,9,10,13,$ $g(0)=2,$ $g(3)=g(4)=1,$ and $g(6)=g(7)=g(9)=g(10)=g(13)=0.$}
\label{bounce path-1}
\end{figure}
\section{Bijective proof of symmetry for $m\le 3$}\label{Section m<4}

In this section we give a bijective proof of the identity
$c_{m,n}(q,t)=c_{m,n}(t,q)$ for $n=2$ and $n=3$.

The $(2,2k+1)$ case is very simple. We should consider Young diagrams in $2\times (2k+1)$ rectangle below the diagonal. Such a diagram $D_i$ can have only one row with $i\le k$ boxes in it, and $|D_i|=h_{+}(D_i)=i$. Therefore, the polynomial $c_{2,2k+1}(q,t)$ is given by the following formula:

$$
c_{2,2k+1}(q,t)=q^k+q^{k-1}t+\dots+qt^{k-1}+t^k,
$$
\noindent which is obviously symmetric in $q$ an $t.$ The corresponding involution on diagrams sends the diagram $D_i$ to $D_{k-i}.$

The case $(3,n)$ turns out to be more subtle. We consider Young diagrams in $3\times n$ rectangle below the diagonal. Such a diagram $D_{\alpha,\beta}$ has two rows of length $\alpha$ and $\beta$, $\alpha\le \beta$. Moreover, 

$$
\alpha\le k:=\lfloor\frac{m}{3}\rfloor,\quad \beta\le \lfloor\frac{2m}{3}\rfloor.
$$ 

Consider the map $\phi:D\mapsto (\delta-|D|,h_+(D))$ from the set of diagrams in $3\times m$ rectangle below the diagonal to $\mathbb Z^2.$

\begin{theorem}
The map $\phi$ is injective. The image of $\phi$ is the set of integer points inside the triangle:

$$
\Td:=\left\{(a,b): a+b\le \delta, a+2b\ge \delta, 2a+b\ge \delta\right\}.
$$
\end{theorem}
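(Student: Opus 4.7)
My plan is to compute $h_+(D_{\alpha,\beta})$ explicitly and then invert $\phi$ piecewise, using that $D_{\alpha,\beta}$ has at most two rows so every cell $c$ has leg $l(c)\in\{0,1\}$.

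Writing $m=3k+r$ with $r\in\{1,2\}$ and noting that $m/3$ is irrational, the condition defining $h_+$ reduces to: cells with $l=0$ contribute iff $a(c)\le k$, and cells with $l=1$ contribute iff $k\le a(c)\le 2k+r-1$. The first $\alpha$ cells of the longer row are exactly the $l=1$ cells; the remaining $\beta-\alpha$ cells of that row together with the shorter row all have $l=0$. Summing the contributions yields the uniform piecewise formula
\[
h_+(D_{\alpha,\beta})=\begin{cases}\beta & \text{if }\beta\le k,\\ 2\beta-k & \text{if }k<\beta\le\alpha+k,\\ 2\alpha+k+1 & \text{if }\beta\ge\alpha+k+1,\end{cases}
\]
where the answer is independent of $r$ because the extra allowed arm $a=2k+1$ available when $r=2$ falls outside the range $[1,\alpha]$ under the constraints $\alpha\le k$ and $\beta\le\lfloor 2m/3\rfloor$.

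Substituting into $(a,b)=(\delta-\alpha-\beta,h_+)$, a short case-by-case check verifies the three defining inequalities of $\mathbf{T}_\delta$; for instance $a+b\le\delta$ reduces in each case to the defining condition of that case. For injectivity I would construct an explicit inverse: Case A applies when $b\le k$, while for $b>k$ the parity of $b$ relative to $k$ decides whether we are in Case B1 ($b\equiv k\pmod 2$) or Case B2 ($b\equiv k+1\pmod 2$). The inverse formulas $\beta=b$ (A), $\beta=(b+k)/2$ (B1), $\alpha=(b-k-1)/2$ (B2) then uniquely recover $(\alpha,\beta)$. The stricter bounds $\alpha\le k$ and $\beta\le\lfloor 2m/3\rfloor$ hold automatically by a parity argument: in Case B2 with $r=1$, for example, the bound $\beta\le 2k$ translates to $2a+b\ge\delta+1$, which is forced because $2a+b\equiv b\equiv k+1\pmod 2$ while $\delta=3k\equiv k$, so the $\mathbf{T}_\delta$-inequality $2a+b\ge\delta$ cannot be tight.

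The main obstacle is the bookkeeping in the first step: three cases uniformly in $r\in\{1,2\}$, with several boundary values that must line up exactly. Once it is in place, the bijection is immediate, and the symmetry of $\mathbf{T}_\delta$ under $(a,b)\leftrightarrow(b,a)$ yields $c_{3,m}(q,t)=c_{3,m}(t,q)$.
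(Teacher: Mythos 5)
Your proposal follows essentially the same route as the paper: compute $h_+(D_{\alpha,\beta})$ in the same three cases (distinguished by $\beta \le k$, $k < \beta \le \alpha+k$, $\beta > \alpha + k$) to get exactly the same piecewise formula, then observe that the resulting image sets partition $\Td\cap\mathbb{Z}^2$ according to $b \le k$ versus $b > k$ with $b+k$ even or odd. Your explicit inverse $(\beta = b$, $\beta = (b+k)/2$, $\alpha = (b-k-1)/2)$ is a spelled-out version of what the paper leaves implicit when it states the three image sets and notes they cover $\Td$ without overlap, and your parity argument for the automatic validity of the constraints $\alpha \le k$, $\beta \le \lfloor 2m/3 \rfloor$ is a useful detail the paper omits.
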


\begin{proof}

The area of $D_{\alpha,\beta}$ equals $\alpha+\beta$. To compute $h_{+}(D_{\alpha,\beta})$, we consider three cases:

\begin{enumerate}
\item $0\le\alpha\le\beta\le k$. In this case one can check that $h_{+}(D_{\alpha,\beta})=\beta$. Therefore, $\phi(D_{\alpha, \beta})=(\delta-\alpha-\beta,\beta),$ and the image of $\phi$ is given by the formula
$$
\{(a,b)\in \Td\cap\mathbb Z^2: b\le k\}.
$$

\item $\beta>k, \beta-\alpha\le k$. In this case $h_{+}(D_{\alpha,\beta})=2\beta-k, \quad \phi(D_{\alpha, \beta})=(\delta-\alpha-\beta,2\beta-k),$ and the image of $\phi$ is given by the formula
$$
\{(a,b)\in \Td\cap\mathbb Z^2: b> k,\ b+k\ \mbox{\rm even}\}.
$$ 

Indeed, the boxes contributing to $h_{+}(D_{\alpha,\beta})$ form three groups of sizes
$\alpha, \beta-\alpha,$ and  $\beta-k$, hence
$$h_{+}(D_{\alpha,\beta})=(\beta-\alpha)+\alpha+(\beta-k)=2\beta-k.$$
We show these boxes in Figure \ref{bounceboxes}.

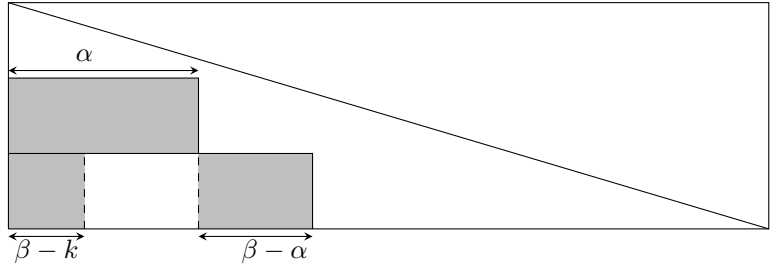
\begin{figure}
\begin{tikzpicture}
\fill [color=lightgray] (0,0)--(0,1)--(1,1)--(1,0);
\fill [color=lightgray] (0,1)--(0,2)--(2.5,2)--(2.5,1);
\fill [color=lightgray] (2.5,0)--(2.5,1)--(4,1)--(4,0);

\draw (0,0)--(0,3)--(10,3)--(10,0)--(0,0);
\draw (0,3)--(10,0);
\draw (0,2)--(2.5,2)--(2.5,1)--(4,1)--(4,0);
\draw (0,1)--(2.5,1);
\draw [dashed] (1,0)--(1,1);
\draw [dashed] (2.5,0)--(2.5,1);
\draw [<->,>=stealth] (0,-0.1)--(1,-0.1);
\draw [<->,>=stealth] (2.5,-0.1)--(4,-0.1);
\draw [<->,>=stealth] (0,2.1)--(2.5,2.1);
\draw (1,2.3) node {$\alpha$};
\draw (3.5,-0.3) node {$\beta-\alpha$};
\draw (0.5,-0.3) node {$\beta-k$};
\end{tikzpicture}
\caption{Boxes contributing to $h_{+}(D_{\alpha,\beta})$ for $\beta>k, \beta-\alpha\le k$.}
\label{bounceboxes}
\end{figure}

\item $\beta-\alpha> k$. In this case $h_{+}(D_{\alpha,\beta})=2\alpha+k+1,\quad \phi(D_{\alpha, \beta})=(\delta-\alpha-\beta,2\alpha+k+1),$ and the image of $\phi$ is given by the formula
$$
\{(a,b)\in \Td\cap\mathbb Z^2: b> k,\ b+k\ \mbox{\rm odd}\}. 
$$ 
\end{enumerate}
Note that these three image sets cover all integer points in $\Td$ with no overlaps.
\end{proof}
Note that the triangle $\Td$ is symmetric with respect to the diagonal $a=b.$ Therefore, one gets the following
\begin{corollary}
There exist an involution ${\bf i}$ on the set of diagrams in $3\times n$ rectangle below the diagonal, such that $\delta-|D|=h_+({\bf i}(D)).$ In particular, the polynomial $c_{3,n}(q,t)$ is symmetric.
\end{corollary}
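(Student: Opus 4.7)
The plan is to derive the corollary directly from the preceding theorem, which does all the real work. The theorem establishes that the map $\phi\colon D\mapsto (\delta-|D|, h_+(D))$ is a bijection from the set of Young diagrams in a $3\times n$ rectangle below the diagonal onto the set of integer points $\Td\cap\mathbb{Z}^2$. The key observation I would exploit is that the triangle
\[
\Td=\{(a,b):a+b\le\delta,\ a+2b\ge\delta,\ 2a+b\ge\delta\}
\]
is manifestly symmetric under the coordinate swap $\sigma\colon (a,b)\mapsto (b,a)$, because interchanging $a$ and $b$ permutes the three defining inequalities (the first is symmetric, the second and third swap). Consequently $\sigma$ restricts to an involution on $\Td\cap\mathbb{Z}^2$.

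I would then define the desired involution on diagrams by conjugation through $\phi$: set
\[
\mathbf{i}:=\phi^{-1}\circ\sigma\circ\phi.
\]
This is well defined because $\phi$ is a bijection onto $\Td\cap\mathbb{Z}^2$, and it is an involution because $\sigma$ is. To check the advertised statistic exchange, note that if $\phi(D)=(a,b)$ then $\phi(\mathbf{i}(D))=(b,a)$, which unpacks to
\[
\delta-|\mathbf{i}(D)|=b=h_+(D)\quad\text{and}\quad h_+(\mathbf{i}(D))=a=\delta-|D|,
\]
so in particular $\delta-|D|=h_+(\mathbf{i}(D))$ as required.

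Finally, for the symmetry of $c_{3,n}(q,t)$ I would perform the substitution $D\mapsto \mathbf{i}(D)$ in the defining sum:
\[
c_{3,n}(q,t)=\sum_{D}q^{\delta-|D|}t^{h_+(D)}=\sum_{D}q^{h_+(\mathbf{i}(D))}t^{\delta-|\mathbf{i}(D)|}=\sum_{D'}q^{h_+(D')}t^{\delta-|D'|}=c_{3,n}(t,q),
\]
where the third equality uses that $\mathbf{i}$ is a bijection (indeed an involution) on the index set.

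There is essentially no obstacle: the theorem has already reduced the combinatorial problem to the elementary geometric symmetry of $\Td$, and the corollary is a formal consequence. The only mild subtlety worth checking in the write-up is that the partition of $\Td\cap\mathbb{Z}^2$ used in the theorem's proof (according to the parity of $b+k$ when $b>k$) is compatible with $\sigma$, but this is automatic since $\sigma$ is just a relabelling of the target and the argument only needs that every integer point of $\Td$ is hit exactly once.
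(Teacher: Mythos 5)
Your proof is correct and follows essentially the same route as the paper: the paper defines $\mathbf{i}=\phi^{-1}\circ\mathbf{s}\circ\phi$ where $\mathbf{s}$ is the reflection across $a=b$, which is exactly your coordinate swap $\sigma$. The additional verification you spell out (unpacking $\phi(\mathbf{i}(D))=(b,a)$ and re-indexing the defining sum) is just a fuller write-up of the same argument.
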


\begin{proof}
The involution ${\bf i}$ is given by the formula

$$
{\bf i}=\phi^{-1}\circ {\bf s} \circ \phi,
$$

\noindent where ${\bf s}:\mathbb R^2\to \mathbb R^2$ is the symmetry with respect to the diagonal $a=b.$
\end{proof}

{\bf Acknowledgements.}

The authors are grateful to D. Armstrong, L. G\"ottsche, J. Haglund, A. Iarrobino, L. Migliorini, A. Oblomkov, J. Rasmussen and V. Shende
for  useful discussions. Special thanks to K. Lee, L. Li and N. Loehr for communicating us their proof of Conjecture \ref{SymConj} for $m\le 4$.The research of E. G. was partially supported by the grants RFBR-10-01-00678, NSh-8462.2010.1 and the Simons foundation.

\end{document}